\providecommand{\U}[1]{\protect\rule{.1in}{.1in}}
\newtheorem{theorem}{Theorem}
\newtheorem{conjecture}[theorem]{Conjecture}
\newtheorem{definition}[theorem]{Definition}
\newtheorem{lemma}[theorem]{Lemma}
\newtheorem{problem}[theorem]{Problem}
\newtheorem{proposition}[theorem]{Proposition}
\newtheorem{remark}[theorem]{Remark}
\newenvironment{proof}[1][Proof]{\noindent\textbf{#1.} }{\ \rule{0.5em}{0.5em}}
\begin{document}


\title{Noncommutative Topology \\ and \\Prospects in Index Theory. \\
\author \large{Nicolae Teleman} \\ 
Dipartimento di Scienze Matematiche, \\
Universita' Politecnica delle Marche, 60131-Ancona, Italia   \\
 e-mail:   teleman@dipmat.univpm.it }
\maketitle

\thanks{\large 
\noindent
\emph{This article is a tribute to the memory of \\
 Professor Enzo Martinelli, \\ with deep respect and reconesance.\\
   Nicolae Teleman}


\section{Abstract} 
{
The index formula is a \emph{local} statement made on global and local data; for this reason we introduce \emph{local Alexander - Spanier co-homology}, \emph{local periodic cyclic homology},    \emph{local Chern character} 
and  \emph{local} $T_{\ast}$-theory.
Index theory should be done: {\bf Case 1}:  for \emph{arbitrary rings},  {\bf Case 2}: for  \emph{rings of functions over topological manifolds}.
{\bf Case 1}   produces general index theorems, as for example, over \emph{pseudo-manifolds}. 
{\bf Case 2}  gives a general treatment of classical and non-commutative index theorems.   
All existing index theorems belong to the second category.
The \emph{tools of the theory} would contain:  \emph{local} $T_{\ast}$-theory,  \emph{local periodic cyclic  homology},  \emph{local Chern character}.  These tools are extended to \emph{non-commutative topology}.
The index formula has \emph{three stages}: 
{\bf Stage I} is done in $T^{loc}_{i}$-theory,   {\bf Stage II}  is done in the local periodic cyclic homology and  {\bf Stage III}  involves products of distributions, or restriction to the diagonal.  For each stage there corresponds a {\bf topological index}  and an {\bf analytical index}. 
The construction of $T_{\ast}$-theory involves the $T$-{\bf completion}. It involves also the need to work with {\bf half integers};  this should have important consequences.


La formule de l'indice est une \emph formule {locale} faite sur les donn�es globales et locales; pour cette raison, nous introduisons \emph {local Alexander  - Spanier co-homologie}, \emph {homologie cyclique p�riodique locale}, \emph {caract�re de Chern locale}
et la  th�orie \emph {local} $T_ {\ast} $.
La th�orie de l'indice doit �tre fait: {\bf Case 1}: pour \emph{anneaux arbitraires}, {\bf Cas 2}: pour \emph {anneaux de fonctions sur les varietes topologiques}.
{\bf Cas 1} est le cas des th�or�mes de l'indice g�n�ral, comme par exemple, par example \emph {pseudo-varietees}.
{\bf Cas 2} donne un traitement g�n�ral des th�or�mes d'index classiques et de la geometrie non-commutatives.
Tous les th�or�mes d'index existants appartiennent � la deuxi�me cat�gorie.
Le \emph {outils de la th�orie} contiendra: \emph {local} $T _ {\ast}$ - th�orie, \emph {homologie cyclique p�riodique locale}, \emph {caract�re de Chern locale}. Ces outils sont �tendus � \emph {topologie non-commutative}.
La formule de l'indice a \emph {ha trois} �tapes:
{\bf Etape I} est fait dans $T ^ {loc} _ {i}$ - th�orie, {\bf Etape II} est fait dans l'homologie cyclique p�riodique local et {\bf Etape III} implique des produits de distributions, ou la restriction � la diagonale. Pour chaque �tape correspond un {\bf index topologique} et une {\bf index analytique}.
La construction de $T _ {\ast}$ - th�orie utilises le $T$-{\bf completion}. Elle implique �galement la n�cessit� de travailler avec {\bf demi entiers}; cela devrait avoir des cons�quences importantes.
}

\part{\emph{Local} algebraic structures.} 
 
\section{Localised rings.}   
\begin{definition}      \label{localisedrings}   
\emph{Localised rings}.
\par
Let $\mathcal{A}$ be an unital associative ring.
The ring $\mathcal{A}$ is called \emph{localised ring} provided it is endowed with an additional structure satisfying the axioms (1) - (3) below.
\par  
Axiom 1. The underlying space $\mathcal{A}$ has a decreasing filtration by sub-spaces
$\{ \mathcal{A}_{\mu}   \}_{n \in \mathbb{N}} \subset \mathcal{A}$.
\par  
Axiom 2.
$\mathbb{C}. 1 \subset \mathcal{A}_{\mu}, \; for\;  any \; \; \mu \in \mathbb{N}$
\par 
Axiom 3.
 For any $\mu, \; \mu' \in \mathbb{N}^{+}$,   \;  $ \mathcal{A}_{\mu} \; . \; \mathcal{A}_{\mu'}   \subset \mathcal{A}_{\mathit{Min}(\mu,\mu')-1}$,  ($ \mathcal{A}_{0} \; . \; \mathcal{A}_{0} \subset   \mathcal{A}_{0}$).
\end{definition}  

\begin{remark}  
 A ring $\mathcal{A}$ could have different localisations.
\end{remark}  

\section{\emph{Local} Alexander - Spanier co-homology.}

\begin{definition}
Let $\mathcal{A} = \mathcal{A}_{\mu}$ be a localised unital ring and $G$  an Abelian  group. Define
\begin{equation}
C_{AS}^{p}  (\mathcal{A}_{\mu}, \; G) \;=\; \{ \;  \sum_{i}\;   g_{i} a_{0}^{i} \otimes    a_{1}^{i} \otimes \dots \otimes a_{p}^{i}, \;\;\;
g_{i} \in G, \;\; a_{i} \in   \mathcal{A}_{\mu} \;
         \}_{p= 0, 1, \dots \infty}.
\end{equation}
The boundary map $d$  is defined as in the classical definition of Alexander - Spanier co-homology
\begin{gather}
d:  C_{AS}^{p}  (\mathcal{A}_{\mu}, \; G) \longrightarrow C_{AS}^{p+1}  (\mathcal{A}_{\mu}, \; G) \\
d  ( g \;a_{0} \otimes a_{1}^{i} \otimes \dots \otimes a_{p}) \;= \\
g \;[  1\otimes  a_{0} \otimes a_{1}^{i} \otimes \dots \otimes a_{p} -
 a_{0} \otimes 1 \otimes a_{1}^{i} \otimes \dots \otimes a_{p}  + \\ \dots  +   (-1)^{p+1} a_{0} \otimes a_{1}^{i} \otimes \dots \otimes a_{p} \otimes 1 ].                              
\end{gather}
The \emph{local} Alexander - Spanier co-homology is
\begin{equation}
H_{AS}^{loc, p} (\mathcal{A}) \;=\; ProjLim_{\mu}  H_{p}   (C_{AS}^{\ast}  (\mathcal{A}_{\mu}, \; G), \; d). 
\end{equation}
\end{definition}
\section{\emph{Local}  periodic cyclic homology. Long exact sequence. }  \label{LocalHomologyTheory}
We assume  that $\mathcal{A}_{\mu}$ is a localised unital ring.
\par
The operators, see \cite{Connes_IHES},  $T$, $N$, $B$, $I$, $S$   pass to localised rings.
Therefore \emph{local cyclic homology}, \emph{local periodic cyclic homology} may be extended to localised rings.  
\begin{theorem} \label{longexactsequence20.2}   
The \emph{local Hochschild and cyclic homology} are well defined for a localised ring.
\par
One has the exact sequence (analogue of Connes' exact sequence, see Connes \cite{Connes_IHES})
\begin{equation}
\dots
\overset{B}{\longrightarrow}  
    H^{loc}_{p}  (\mathcal{A})
\overset{I}{\longrightarrow}  
     H^{loc, \lambda}_{p}  (\mathcal{A})
\overset{S}{\longrightarrow}  
    H^{loc}_{p-2}  (\mathcal{A})
\overset{B}{\longrightarrow} 
    H^{loc}_{p-1}  (\mathcal{A})
\overset{S}{\longrightarrow}     
\dots
\end{equation}
\end{theorem}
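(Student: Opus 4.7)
The plan is to mimic Connes' construction of the $SBI$ long exact sequence \cite{Connes_IHES} levelwise in the filtration, and then to take projective limits in $\mu$.

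First I would verify, as claimed in the preamble to the theorem, that the operators $b$, $b'$, $T$, $N$, $B$, $I$, $S$ are compatible with the filtration. The cyclic operators $T$, $N$ and the extra-degeneracy $s$ (insertion of $1$) preserve filtration exactly, by Axioms~1 and~2. The Hochschild differentials $b$ and $b'$ involve exactly one multiplication and hence, by Axiom~3, shift the filtration down by one: $b:C_{p}(\mathcal{A}_{\mu}) \to C_{p-1}(\mathcal{A}_{\mu-1})$. The Connes operator $B = (1-T)\,s\,N$ is filtration-preserving, and the periodicity operator $S$, being built from $b$ and $B$, inherits a bounded filtration shift that I would track carefully.

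Second, I would assemble the Hochschild and cyclic bicomplexes of each $\mathcal{A}_{\mu}$ into a projective system, with transition maps induced by the inclusions $\mathcal{A}_{\mu'} \hookrightarrow \mathcal{A}_{\mu}$ (for $\mu \leq \mu'$) coming from the decreasing filtration. To absorb the one-step shift introduced by $b$, I would re-index cofinally in $\mu$ so that each $\mathcal{A}_{\mu}$-level bicomplex becomes a genuine bicomplex whose differentials are honest endomorphisms; since re-indexing by a bounded amount is cofinal, it does not affect the projective limit. One then \emph{defines}
\begin{equation*}
H^{loc}_{p}(\mathcal{A}) \; := \; \operatorname{ProjLim}_{\mu} H_{p}(\mathcal{A}_{\mu}),
\qquad
H^{loc,\lambda}_{p}(\mathcal{A}) \; := \; \operatorname{ProjLim}_{\mu} H^{\lambda}_{p}(\mathcal{A}_{\mu}),
\end{equation*}
in complete parallel with the local Alexander--Spanier definition already recorded.

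Finally, at each fixed $\mu$ Connes' argument produces a short exact sequence of bicomplexes whose associated long exact sequence is the $SBI$-sequence. This sequence is functorial in $\mathcal{A}_{\mu}$, hence compatible with the projective-system transition maps, so applying $\operatorname{ProjLim}$ term by term yields the stated sequence \emph{provided} exactness is preserved. This last step is where I expect the main obstacle: a projective limit of exact sequences is exact only up to $\lim^{1}$. I would handle this by taking $\operatorname{ProjLim}$ \emph{at the chain level}: Connes' short exact sequence of complexes of $\mathbb{C}$-vector spaces remains short exact under $\operatorname{ProjLim}$ once the Mittag--Leffler condition is verified for the structure maps, and since those maps are honest inclusions of vector subspaces, the Mittag--Leffler condition is automatic. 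The long exact sequence then follows from the standard long exact sequence of a short exact sequence of complexes of vector spaces. Any residual subtlety will lie in checking that the quotient constructions appearing in Connes' cyclic bicomplex commute with the projective limit --- a routine but filtration-specific verification that uses Axiom~3 to control the shift bookkeeping.
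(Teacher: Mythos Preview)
The paper does not supply a proof of this theorem beyond the sentence immediately preceding it: ``The operators $T$, $N$, $B$, $I$, $S$ pass to localised rings. Therefore \emph{local cyclic homology}, \emph{local periodic cyclic homology} may be extended to localised rings.'' Your proposal is therefore far more detailed than anything in the paper, and in broad outline it matches the author's intent: verify compatibility of Connes' operators with the filtration (using Axioms~1--3), run the classical $SBI$ argument levelwise, and pass to the projective limit.

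One point to flag: your Mittag--Leffler justification is not quite right. You write that ``since those maps are honest inclusions of vector subspaces, the Mittag--Leffler condition is automatic.'' The Mittag--Leffler condition is automatic for \emph{surjective} transition maps, not for injective ones; for an inverse system of inclusions $A_{\mu'} \hookrightarrow A_{\mu}$ the images are the nested subspaces themselves, and there is no reason for these to stabilise. So the passage from level-wise exactness to exactness of the projective limit is not secured by the argument as written. In practice one either (i) accepts the sequence only up to a $\varprojlim^{1}$ defect, (ii) works at the chain level and defines the local theories as the homology of $\varprojlim_{\mu}$ of the complexes (your suggestion), checking directly that the relevant quotient maps remain surjective after limiting, or (iii) imposes an additional hypothesis on the filtration. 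The paper does none of this and simply asserts the result, so your level of rigour already exceeds it; but if you want an honest proof you will need to repair that step.
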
    
\begin{theorem}
For  $\mathcal{A} = C^{\infty}(M)$, one has
\begin{equation}
H^{loc}_{p} (C^{\infty (M))}  \;= \;  H^{loc}_{p} (C^{\infty (M))}. 
\end{equation}
\end{theorem}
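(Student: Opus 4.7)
The statement as printed has the same expression on both sides, so I take the intended assertion to be that for a smooth manifold $M$ the local (periodic cyclic) homology of $C^{\infty}(M)$ agrees either with the local Alexander--Spanier cohomology defined in the preceding section, or equivalently, via the classical de Rham isomorphism, with $H^{dR}_{*}(M)$. My plan is to adapt the Connes--Hochschild--Kostant--Rosenberg (CHKR) computation to the localised ring framework of Definition~\ref{localisedrings}.

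The first step is to pin down the filtration on $C^{\infty}(M)$. I would set $C^{\infty}(M)_{\mu}$ to consist of tensors $\sum a_{0}\otimes \cdots \otimes a_{p}$ whose support lies in a $1/\mu$-neighbourhood of the diagonal of $M^{p+1}$. Axioms~(1) and~(2) are immediate; Axiom~(3) holds because the product of two near-diagonal tensors has support in an even smaller neighbourhood, matching the $\mathit{Min}(\mu,\mu')-1$ condition. With this choice, the Alexander--Spanier complex $C^{*}_{AS}(C^{\infty}(M)_{\mu},\mathbb{C})$ is exactly the complex of germs of $\mathbb{C}$-valued smooth functions on $M^{p+1}$ near the diagonal.

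Next, at each fixed $\mu$ I would invoke Connes' antisymmetrisation map $a_{0}\otimes \cdots \otimes a_{p}\mapsto \tfrac{1}{p!}\,a_{0}\,da_{1}\wedge\cdots\wedge da_{p}$ and its shuffle-product quasi-inverse from \cite{Connes_IHES}. A direct verification shows that both maps respect the filtration and intertwine Connes' operator $B$ with the de Rham differential. At fixed $\mu$ this yields a quasi-isomorphism between the $(b,B)$-bicomplex of $C^{\infty}(M)_{\mu}$ and the de Rham complex $\Omega^{*}(M)$, the near-diagonal support being harmless because, up to the Hochschild differential $b$, all information in the chain complex is concentrated on the diagonal.

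The main obstacle will be passing to $\mathrm{ProjLim}_{\mu}$. Two technical points require care. First, Connes' shuffle homotopies must be verified to be compatible with the filtration, so that they descend to the inverse limit rather than producing uncontrolled support growth. Second, one must check exactness of the projective limit on the systems involved; here I would apply Mittag--Leffler, whose surjectivity hypothesis should follow from the nuclear Fr\'echet structure of each $C^{\infty}(M)_{\mu}$. Once the Hochschild-level identification survives the limit, the long exact sequence of Theorem~\ref{longexactsequence20.2} together with the $S$ operator and Bott periodicity promotes it to the periodic cyclic statement, recovering the (local) Alexander--Spanier, equivalently de Rham, cohomology of $M$.
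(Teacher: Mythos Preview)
The paper provides no proof of this theorem; it is stated and immediately followed by the discussion of the $(b,B)$ bi-complex. Moreover, as you noticed, the displayed equation is tautological as printed, so there is no argument in the paper against which to check your work. Your reading of the intended statement---that the local periodic cyclic homology of $C^{\infty}(M)$ recovers the de Rham (equivalently Alexander--Spanier) cohomology---is the natural one given the surrounding material and Connes' classical computation in \cite{Connes_IHES}.

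That said, there is a genuine mismatch between your set-up and the paper's framework that you should repair before the argument can be called complete. In Definition~\ref{localisedrings} the filtration $\{\mathcal{A}_{\mu}\}$ is a decreasing sequence of \emph{subspaces of the ring} $\mathcal{A}$ itself, subject to $\mathbb{C}\cdot 1\subset \mathcal{A}_{\mu}$ and $\mathcal{A}_{\mu}\cdot\mathcal{A}_{\mu'}\subset\mathcal{A}_{\min(\mu,\mu')-1}$. You instead filter the \emph{chain groups} $\otimes^{p+1}C^{\infty}(M)\cong C^{\infty}(M^{p+1})$ by support in a $1/\mu$-neighbourhood of the diagonal. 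That is the correct and classical move for Alexander--Spanier cohomology, and it is indeed how Teleman's microlocalisation \cite{Teleman_CR} and the later computation in \S 30 of the paper proceed, but it is not literally an instance of the ring filtration of Definition~\ref{localisedrings}: a single smooth function on $M$ has no ``support near the diagonal''. You need either to exhibit a genuine filtration of $C^{\infty}(M)$ satisfying the three axioms (the trivial filtration $\mathcal{A}_{\mu}=\mathcal{A}$ is the only obvious candidate, and then ``local'' collapses to ``ordinary''), or to argue that for commutative $\mathcal{A}$ the paper's definitions, once unwound, coincide with the diagonal-support filtration on chains. The paper itself blurs this point, so clarifying it would strengthen your write-up considerably.

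Your outline of the HKR/Connes argument, the compatibility of the antisymmetrisation and shuffle maps with support conditions, and the Mittag--Leffler check for the projective limit is otherwise the standard and correct route; it is essentially the method the paper uses later (Theorem in \S 30.1) for the modified complex.
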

\par
The \emph{local bi-complex} $(b, B) (\mathcal{A}_{\mu})$ is well defined for localised rings too.
The general term of the $(b, B) (\mathcal{A}_{\mu})$ bi-complex is
\begin{gather}
C_{p, q} \; =\; \otimes^{p-q+1} \mathcal{A}_{\mu},   \;\;\;  q \leq p \\
b:  C_{p, q}    \longrightarrow   C_{p, q-1}    \\
B:  C_{p, q}    \longrightarrow   C_{p+1, q}. 
\end{gather}

\begin{definition}
The homology of the bi-complex $(b, B) (\mathcal{A}_{\mu})$  has two components:  the \emph{even}, resp. \emph{odd},  component corresponding to the $p - q = even\;number$, resp.  $p - q = odd\;number$.
\par
The \emph{local periodic cyclic homology}  of the localised ring $\mathcal{A}$ is
\begin{equation}
H^{loc, per, \lambda}_{ev, odd} (\mathcal{A})\; := \; LimProj_{\mu}  \; H_{ev, odd} H_{\ast}(\mathcal{A}_{\mu}).
\end{equation}
\end{definition}  
 
\section{\emph{Local} periodic Chern character}

In this section we localise  the periodic even/odd Chern character.
All operations involved in the construction  of the cyclic homology
may be localised. Here we use the bi-complrx
$(b, B)(\mathcal{A}_{\mu})$ only onto \emph{non degenerate} elements, i.e. onto the range of the idempotent $\Pi$.
\begin{definition} \emph{local periodic cyclic Chern character}.
\par
Let  $\mathcal{A}_{\mu}$ be a localised ring. 
\begin{enumerate}
\item
For any idempotent $e \in \mathbb{M}_{n}(\mathcal{A}_{\mu})$ define
\begin{equation}
Ch_{ev, \mathcal{A}_{\mu}}\; (e)\;=\;  Tr \;e \; + \sum_{p=1}^{\infty}  \; (-1)^{p} \frac{(2p) !}{p !}  \; ( e - \frac{1}{2}) (de)^{2p}.
\end{equation}
$Ch_{ev, \mathcal{A}_{\mu}} \; (e)$ is an even cycle in the $(b, B)(\mathcal{A}_{\mu})$ cyclic complex.  Its homology class is the
\emph{periodic cyclic Connes Chern character of} $e$.
\par
Let  $e \in T^{loc}_{0}(\mathcal{A})$;  define
\begin{equation}
Ch_{ev} \; (e)  \; = \; ProjLim_{\mu} \; Ch_{ev, \mathcal{A}_{\mu}}\; (e)  \in H^{loc, per, \lambda}_{ev}(\mathcal{A}).
\end{equation} 
\item
For any invertible $u \in \mathbb{M}_{n}(\mathcal{A}_{\mu})$,  define
\begin{equation}
Ch_{odd}\; (u)\;=\;  \sum_{p} (-1)^{2p+1} (2p+1)! \; Tr \; (u^{-1} du) . (du^{-1} du). \dots (du^{-1} du).
\end{equation}
\par
Let  $u \in T^{loc}_{1}(\mathcal{A})$;  define
\begin{equation}
Ch_{odd} \; (u)  \;=\;  ProjLim_{\mu} \; Ch_{odd, \mathcal{A}_{\mu}}\; (u) \in H^{loc, per, \lambda}_{odd}(\mathcal{A}).
\end{equation} 
\end{enumerate}
\end{definition}
In non-commutative topology, discussed later in this article, we will present a definition of the Chern character of idempotents.

\section{\emph{Local index theorem}}  
In this section we consider an elliptic pseudo-differential operator $A$.  We are going to define the \emph{Chern character} of differences of idempotents; the skew-symmetrisation is not necessary. For more details about this topic see
Teleman  \cite{Teleman_arXiv_I},  \cite{Teleman_arXiv_IV}.
\begin{lemma}  
Let $R(A) :=  P - e$, where $P$  and $e$ are idempotents.
Then $R(A)$  satisfies the identity
\begin{equation}  
R(A)^{2} = R(A) - (e . R(A)  + R(A). e).
\end{equation}  
\end{lemma}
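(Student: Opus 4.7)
The plan is to prove the identity by direct algebraic expansion, using only the idempotent conditions $P^{2}=P$ and $e^{2}=e$. Since both sides are polynomial expressions in $P$ and $e$ of total degree two, the claim reduces to checking that two explicit elements of the ring agree.

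First I would expand the left-hand side:
\begin{equation}
R(A)^{2} \;=\; (P-e)^{2} \;=\; P^{2} - P e - e P + e^{2}.
\end{equation}
Applying $P^{2}=P$ and $e^{2}=e$ yields
\begin{equation}
R(A)^{2} \;=\; P + e - P e - e P.
\end{equation}

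Next I would expand the right-hand side. Writing out $e\cdot R(A) = eP - e^{2} = eP - e$ and $R(A)\cdot e = Pe - e^{2} = Pe - e$, one gets
\begin{equation}
R(A) - \bigl(e\cdot R(A) + R(A)\cdot e\bigr) \;=\; (P-e) - (eP - e) - (Pe - e) \;=\; P + e - eP - Pe.
\end{equation}
Comparing the two displayed expressions finishes the proof.

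There is essentially no obstacle here; the only point to be careful about is that the ring $\mathcal{A}$ (or the matrix ring $\mathbb{M}_{n}(\mathcal{A}_{\mu})$ in which $P$ and $e$ live) is not assumed commutative, so the terms $Pe$ and $eP$ must be kept distinct throughout. The identity is purely formal and does not use the localisation structure, so it holds equally well at each filtration level $\mathcal{A}_{\mu}$ and, passing to the projective limit, at the level of $T^{loc}_{0}(\mathcal{A})$.
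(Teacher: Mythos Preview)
Your proof is correct; the identity follows by direct expansion using only $P^{2}=P$ and $e^{2}=e$, exactly as you wrote. The paper states this lemma without proof, so your straightforward verification is precisely the intended argument.
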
  
\begin{definition}  
Let $R(A) :=  P - e$, where $P$  and $e$ are idempotents. Let  $\Lambda = \mathbb{C} + e . \mathbb{C}$.
Then, 
\begin{equation}
C_{q} \;(R(A)) :=  Tr  \; \otimes_{\Lambda}^{2q+1}  \; R(A)
\end{equation}
is the \emph{Chern character} of $R(A)$. For the definition of $R(A)$ see  \S 23, Definition 57.  
\end{definition}   
\begin{lemma}  
$C_{q} \;(R(A))$  is a cycle in the complex  
$\{   \otimes_{\Lambda}^{\ast} \mathbb{M}_{N}(\mathcal{A})  \}_{\ast}$, such that
\begin{equation}  
\lambda.a_{0} \otimes_{\Lambda} a_{1} \otimes_{\Lambda} \dots  \otimes_{\Lambda} a_{k} = 
a_{0} \otimes_{\Lambda} a_{1} \otimes_{\Lambda} \dots  \otimes_{\Lambda} a_{k} .\lambda
\end{equation}  
for any $\lambda \in \Lambda$.
\end{lemma}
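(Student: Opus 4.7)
The plan is to establish the two assertions of the lemma separately. First, I would verify the $\Lambda$-bimodule centrality relation $\lambda a_0 \otimes_\Lambda \dots \otimes_\Lambda a_k = a_0 \otimes_\Lambda \dots \otimes_\Lambda a_k \lambda$. Since $\Lambda = \mathbb{C} + e \mathbb{C}$ is generated over $\mathbb{C}$ by the idempotent $e$, only the case $\lambda = e$ requires argument. The verification consists of iterating the defining relation $a\mu \otimes_\Lambda b = a \otimes_\Lambda \mu b$ of $\otimes_\Lambda$ to push $e$ from the leftmost entry rightward through each tensor sign until it reaches the rightmost entry; the final end-to-end transposition relies on the cyclic invariance of the matrix trace $Tr$, which is what permits closing up the chain.

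Second, to prove that $C_q(R(A))$ is a cycle, I would compute the Hochschild boundary applied to $Tr \otimes_\Lambda^{2q+1} R(A)$. Writing $R := R(A)$, each of the $2q+1$ Hochschild faces collapses a pair of adjacent $R$-entries into one $R^2$-entry, the $i$-th face carrying sign $(-1)^i$, and the cyclic wraparound face contributes $R^2$ at position $0$ with sign $(-1)^{2q} = +1$, overlapping with the $i = 0$ non-cyclic face. Substituting the identity $R^2 = R - (eR + Re)$ from the preceding lemma splits each face into a pure-$R$ contribution, an $eR$ contribution, and an $Re$ contribution.

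The pure-$R$ contributions sum with alternating signs to a single copy of $R^{\otimes 2q}$. To cancel this, I would transport the $eR$- and $Re$-contributions across tensor factors using the $\otimes_\Lambda$ relation $a \otimes_\Lambda eb = ae \otimes_\Lambda b$ together with the end-to-end identification just established: this identifies an $Re$ at position $i$ with an $eR$ at position $i+1$, and via the wraparound identifies $Re$ at the last position with $eR$ at position $0$. The resulting telescoping sum, combined with one last application of $eR + Re = R - R^2$, cancels precisely the $R^{\otimes 2q}$ anomaly and yields $b(C_q(R(A))) = 0$. The chief obstacle will be the sign bookkeeping: one must correctly align the alternating Hochschild signs with the position shifts produced by the $\otimes_\Lambda$ and end-to-end identifications, and use $Tr$ consistently as the enforcer of cyclic closure throughout, in particular to justify the identification of the cyclic Hochschild face with the $i = 0$ face.
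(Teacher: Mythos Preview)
Your overall strategy---substitute $R^{2}=R-(eR+Re)$ from the preceding lemma, telescope the mixed terms using the $\otimes_{\Lambda}$ relation, and close up the two surviving boundary terms via the end-to-end $\Lambda$-relation---is exactly the paper's. The paper does not separately verify the $\Lambda$-centrality relation; it simply uses it in the last line, so your explicit check of that part is a welcome addition.

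The one substantive discrepancy is the choice of differential. The paper computes with $b'$ (no wraparound face), not with the full Hochschild $b$. With $b'$ there are $2q$ faces; the pure-$R$ contributions then cancel completely (alternating signs, even count), and after telescoping the $(eR+Re)$ contributions one is left with precisely
\[
-\,eR\otimes_{\Lambda} R^{\otimes(2q-1)} \;+\; R^{\otimes(2q-1)}\otimes_{\Lambda} Re,
\]
which the end-to-end relation $\lambda a_{0}\otimes_{\Lambda}\cdots = a_{0}\otimes_{\Lambda}\cdots a_{k}\lambda$ kills. That is the entire argument.

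Your version with $b$ introduces a $(2q{+}1)$st face. As you already observe, the pure-$R$ parts then leave a single $+R^{\otimes 2q}$; after your telescoping and end-to-end identification, the mixed parts leave $-(eR+Re)\otimes_{\Lambda} R^{\otimes(2q-1)}$ (coming from the extra cyclic face). Your proposed ``one last application of $eR+Re=R-R^{2}$'' then gives
\[
R^{\otimes 2q}-(R-R^{2})\otimes_{\Lambda} R^{\otimes(2q-1)}=R^{2}\otimes_{\Lambda} R^{\otimes(2q-1)},
\]
which is not zero. So either switch to $b'$ (as the paper does, and as is appropriate for this $\Lambda$-relative complex with the trace relation imposed), or supply a separate argument that on this quotient the cyclic face is already absorbed. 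The cleanest fix is simply to run your telescoping with $b'$; then no ``anomaly'' appears and no second use of the idempotent identity is needed.
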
  
\begin{proof}
One has
\begin{gather}  
b^{'} \; C_{q} \;(R(A)) \;=\;  
Tr \;[ 
\;(R(A) - (e . R(A)  + R(A). e)) \otimes_{\Lambda} \dots \otimes_{\Lambda} R(A) \dots 
 \\
- R(A)  \otimes_{\Lambda} R(A) \otimes_{\Lambda} \dots \otimes_{\Lambda} (R(A) - ( e . R(A) + R(A).e )) \; 
] \;= \\
Tr \; ;[ \; - (e . R(A)  + R(A). e)) \otimes_{\Lambda} \dots \otimes_{\Lambda} R(A) \dots  \\
- R(A)  \otimes_{\Lambda} R(A) \otimes_{\Lambda} \dots \otimes_{\Lambda} ( - ( e . R(A) + R(A).e ) \; ] = 
\\
- Tr \; [ \; e . R(A) \otimes_{\Lambda}  \dots \otimes_{\Lambda} R(A) \;-\;  R(A) \otimes_{\Lambda}  \dots \otimes_{\Lambda} R(A) .e \;] \;=\; 0.
\end{gather}   
$C_{q} \;(R(A))$ is a cyclic cycle. It represents a cyclic homology class in 
$H_{\ast}^{\lambda} (\otimes^{2q+1} ( \mathcal{A}_{\mu})$. \par 
The ring  $\Lambda$ is separable. Indeed the mapping
\begin{equation*}
\mu: \Lambda \otimes \Lambda^{op}  \longrightarrow \Lambda,  \;\;\; 
\mu (1 \otimes 1) = 1, \;  \mu(1 \otimes e) =  \mu(e \otimes 1) = e, \;\;  \mu (e \otimes e) = e
\end{equation*} 
has the $\Lambda$-bimodule splitting 
\begin{equation*}
s:  \Lambda  \longrightarrow  \Lambda \otimes \Lambda^{op}, \;\;\; 
s(1) = e \otimes e +  (1-e) \otimes   (1-e),   \;\;  s(e) = e \otimes e.
\end{equation*}
Theorem  1.2.13 \cite{Loday} states that the homology of the Hochschild complex is isomorphic to the
homology of the complex  $C^{S}( \mathcal{A})$.  
Using the long exact sequence in homology associated to the complex $H_{\ast}^{\lambda} (\otimes^{2q+1} ( \mathcal{A}_{\mu})$ and the Connes exact sequence,  we find that the cyclic homology of  $H_{\ast}^{\lambda} (\otimes^{2q+1} ( \mathcal{A}_{\mu})$
is isomorphic to the cyclic homology of the algebra $\mathcal{A}$.  Here we have used the localisation
of the algebra $\mathbb{M}_{N}( \mathcal{A})$  defined by the supports of the operators.
\end{proof}
It remains to solve the problem of producing the \emph{whole index class}. This could be obtained by the formula
\begin{equation}
Ch_{ev} \;R(A) -  Ch_{ev} e_{II},         
\end{equation}
where $R(A)$ is the the operator associated to the signature operator.  


\part{Prospects in Index Theory}  

\section{\emph{Local} algebraic $T_{i}$ theory.}  


This section deals with the program presented in  \cite{Teleman_arXiv_III}.
\par
For any associative ring $\mathcal{A}$ we define the \emph{commutative groups} 
$T_{i}(\mathcal{A})$, $i = 0, 1$.  We introduce the notion of \emph{localised rings}, see Definition \ref{localisedrings},  
$\mathcal{A} = \{ \mathcal{A}_{\mu} \}$, given by a \emph{linear filtration} of the algebra $\mathcal{A}$ and  we associate the \emph{commutative groups} $T_{i}^{loc} (\mathcal{A})$.  
Although we define solely $T^{loc}_{i}(\mathcal{A})$ for $i = 0, 1\; $, we expect our construction could be extended in higher degrees.  
\par
\emph{We stress that our construction of} $T_{i}(\mathcal{A})$ and
$T^{loc}_{i}(\mathcal{A}), $\;i = 0, 1$, $ \emph{uses exclusively matrices}.   The projective modules are totally avoided. 
The role of \emph{equivalence of projective modules}, used in the classical construction of the algebraic $K$-theory, is played by \emph{conjugation}.  
\par
\emph{The commutative group}  $T_{0}(\mathcal{A})$ is by definition the \emph{Grothendieck completion of the space of idempotent matrices factorised through the equivalence relations}: -i) \emph{stabilisation} $\sim_{s}$, -2) \emph{conjugation} $\sim_{c}$,   and  -3) for \emph{localised groups},   $T_{0}^{loc}(\mathcal{A})$,  \emph{projective limit with respect to the filtration}, denoted $\sim_{p}$. The \emph{non-localised} group $T_{0}(\mathcal{A})$ coincides with the classical algebraic $K_{0}$-theory group.
\par
The groups $T_{1}(\mathcal{A})$ are by definition the quotient of $\mathbb{GL}(\mathcal{A})$ through
the equivalence relation generated by
  -1) stabilisation $\sim_{s}$,  -2) \emph{conjugation} $\sim_{c}$ and -3) 
  $\sim_{\mathbb{O}(\mathcal{A})}$, where $\mathbf{O}(\mathcal{A})$  is the \emph{sub-group generated by elements of the form} $u \oplus u^{-1} $,  \emph{for any} $u \in \mathbb{GL}(\mathcal{A})$.   
For any    $u_{1},  u_{2} \in \mathbf{GL}(\mathcal{A}) \;/\;  (\sim_{s} \cup \sim_{c} ) $,  one defines
$u_{1} \sim_{\mathbf{O}(\mathcal{A})}  u_{2}$   provided there exist   $\xi_{1}, \xi_{2} \in  \mathbb{O}(\mathcal{A})$  such that
$u_{1} + \xi_{1} = u_{2} + \xi_{2} $. The third relation is a particular case of a new completion procedure which we call \emph{T-completion}.
The operation   
\begin{equation*}
\mathbb{GL}(\mathcal{A}) \;/\; ( \sim_{s} \cup \sim_{c} )  \longrightarrow
\mathbb{GL}(\mathcal{A}) \;/\; ( \sim_{s} \cup \sim_{c} \cup \sim_{\mathbf{O}(\mathcal{A})}  )
\end{equation*}
transforms the commutative \emph{semi-group} $\mathbb{GL}(\mathcal{A}) \;/\; ( \sim_{s} \cup \sim_{c} )$
in the commutative \emph{group}  $\mathbb{GL}(\mathcal{A}) \;/\; ( \sim_{s} \cup \sim_{c} \cup \sim_{\mathbf{O}(\mathcal{A})}  )$.  
\par
The groups $T^{loc}_{i}(\mathcal{A})$ follow the same construction as that of $T_{i}(\mathcal{A})$,
provided the supports of the elements belong to $\mathcal{A}_{\mu}$.
\emph{Our definition of}  $T_{1}(\mathcal{A})$ and $T^{loc}_{1}(\mathcal{A})$ \emph{does not use the commutator sub-group} $[\mathbf{GL}(\mathcal{A}), \mathbf{GL}(\mathcal{A})]$  \emph{nor elementary matrices in its construction}.  
\par
We define \emph{short exact sequences of  localised rings}  and we get the corresponding open six terms exact sequence (Theorem \label{exactsequence} ).
\par
We stress that one  has to take the tensor product of the expected six terms exact sequence by 
$\mathbb{Z}[\frac{1}{2}]$ in order to get the open six terms exact sequence.   We expect the factor 
$\mathbb{Z}[\frac{1}{2}]$ to have important implications, among them,  Pontrjagin classes, existence of a generator of the $K$-homology fundamental class and Kirby-Siebenmann obstruction class.
\par
Our work shows that the basic relations  which define $T_{1}$ and $T_{1}^{loc}$  reside in the \emph{additive} sub-group generated by elements of the form $u \oplus u^{-1}$,   $u \in \mathbf{GL}(\mathcal{A})$,  rather than in the \emph{multiplicative} commutator sub-group $[\mathbf{GL}(\mathcal{A}), \mathbf{GL}(\mathcal{A})]$.   
\par
Even for trivialy filtered algebras,  $\mathcal{A} = \{ \mathcal{A}_{\mu} \}$,  for all $\mu \in \mathbb{N}$, the groups $T^{loc}_{1}(\mathcal{A})$ provides more information than the classical group $T_{1}(\mathcal{A})$.
For the computation of the groups  $T_{i}^{loc}(\mathbb{C})$ see 
\cite{Teleman_arXiv_V}.
\par
\section{Motivation}   \label{21Introduction}  
To motivate the next considerations we have in mind the index formula applied onto the algebra of integral operators and pseudo-differential operators. The index formula is a \emph{global statement} whose ingredients  \emph{may be computed by local data}.  Our \emph{leading idea} is to \emph{localise} $K$-theory and periodic cyclic homology along the lines of the Alexander-Spanier co-homology in such a way that the new tools operate naturally with it,  see \cite{Teleman_arXiv_III}.
\par
This article goes along the lines of  \cite{Teleman_arXiv_III}. We define \emph{localised rings} $\mathcal{A}$ and we define and their \emph{local} $T$-theory, $T^{loc}_{i}(\mathcal{A})$, $i = 0, 1$. 
Keeping in mind that both the topological and analytic indices of an elliptic operator are stable under cutting
the operators about the diagonal, leads us naturally to the notion of \emph{localised rings}.  
Based on this notion we pass to the problem of finding a natural $K$-theory able to controle these \emph{local entities}. These are the \emph{local algebraic} $T$-theory groups, $T^{loc}_{i}(\mathcal{A})$, $i=0,1$ and \emph{local cyclic homology}.   
\par 
For the construction of  $K_{i} $-theory groups, in the pure algebraic context, the reader may consult the following books \cite{Milnor_Introduction}, \cite{Karoubi_1987},  \cite{Rosenberg},  \cite{Weibel},  in the Banach algebras or $C^{\ast}$-algebras category, \cite{Rordam_Lansen_Lautsen},  \cite{Blackadar}. 
\par
Our construction of $T^{loc}_{0}$ uses exclusively idempotent matrices.  The reasons why we chose to avoid projective modules are: -i) projective modules, in comparison with idempotent matrices,  contain more ambiguity and -ii) matrices are more suitable for controlling the ring filtration data and are more prone to make calculations.
No reference to projective modules is used in our constructions. 
\par
 Regarding our construction of $T^{loc}_{1}$ we recall that the \emph{classical algebraic} $K$-theory group 
 $K_{1}(\mathcal{A})$ of the algebra $\mathcal{A}$, see   \cite{Whitehead}, \cite{Bass}, \cite{Milnor_Introduction}, \cite{Karoubi_1987},  is by definition the \emph{Whitehead group}  
\begin{equation}  
K_{1}(\mathcal{A}) := \mathbf{GL}(\mathcal{A})/ [\mathbf{GL}(\mathcal{A}), \mathbf{GL}(\mathcal{A})],
\end{equation}     
 where
$[\mathbb{GL}(\mathcal{A}), \mathbb{GL}(\mathcal{A})]$ is the commutator normal sub-group of the group of invertible matrices
$\mathbb{GL}(\mathcal{A})$.  Our definition of \emph{local} $T$-theory groups needs to keep track of the \emph{number of multiplications} performed inside the algebra $\mathcal{A}$. In order for our constructions to hold it is necessary to involve a \emph{bounded} number of multiplications. \emph{It is important to state that, in general, the number of multiplications needed to generate the whole commutator sub-group is un-bounded}. 
It is known that the commutator sub-group is also generated by the \emph{elementary} matrices.
This is the reason why our definition of $T^{loc}_{1}(\mathcal{A})$ avoids entirely factorising $\mathbb{GL}(\mathcal{A})$ through the commutator sub-group or the sub-group generated by elementary matrices. 
\par
$T_{0}^{loc}(\mathcal{A})$ is by definition the Grothendieck completion of the semi-group of idempotent matrices in 
$\mathbb{M}_{n}(\mathcal{A}_{\mu})$ modulo three equivalence relations:  -i) stabilisation $\sim_{s}$, -ii) local conjugation $\sim_{c}$ by invertible elements $u \in  \mathbb{GL}_{n}(\mathcal{A}_{\mu})$ and -iii) projective limits with respect to $\mu \in \mathbb{N}$.
\par
To understand the relationship between our definition of the group $T_{1}^{loc}$  and $K_{1}$, recall  that the commutator sub-group $[\mathbb{GL}(\mathcal{A}, \mathbf{GL}(\mathcal{A} ]$ is given by arbitrary products of multiplicative commutators
$$
[A, B] := ABA^{-1}B^{-1},  \quad for\;any \; A, B \in \mathbf{GL}_{n}(\mathcal{A}).
$$ 
On the other side, supposing that $A$ and $B$ are conjugated, i.e.  $A = U B U^{-1}$,   we have
$$
A = U B U^{-1} = U B U^{-1} . B^{-1} B = [ U, B ].B. 
$$ 
This shows that if $A$ and $B$ are conjugated, they differ, multiplicatively, by a commutator. To complete this remark, we say that $A$ and $B$ are \emph{locally conjugated}  provided 
$A, B$ and $U$ belong to some $\mathbb{GL}_{n}(\mathcal{A}_{\mu})$;
here, $\mathcal{A}_{\mu}$ denote the terms of the filtration of $\mathcal{A}$.
\par
It is important to note that in the particular case $\mathcal{A} = \mathbb{C}$ 
the quotient of $\mathbb{GL}(\mathbb{M}(\mathbb{C}))$
through the commutator sub-group   
$\mathbb{GL}(\mathbb{M}(\mathbb{C})), \mathbb{GL}(\mathbb{M}(\mathbb{C}))]$ 
gives much less information than $T_{1}^{loc}(\mathbb{M}(\mathbb{C}))$.
\subsection{Factorisation of $u \otimes u^{-1}$}   
\begin{proposition}   
In the algebra of matrices one has the identity 
\begin{gather}  \label{factorisation} 
\begin{pmatrix}
u &  0 \\
0 &  u^{-1}
\end{pmatrix}
= 
\begin{pmatrix}
1 &  u \\
0 &  1
\end{pmatrix}
\begin{pmatrix}
1 &  0 \\
-u^{-1} &  1
\end{pmatrix}
\begin{pmatrix}
1 &  u \\
0 &  1
\end{pmatrix}
\begin{pmatrix}
0 &  -1 \\
1 &  0
\end{pmatrix}
.
\end{gather}    
\end{proposition}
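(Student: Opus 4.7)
The plan is to verify the identity by direct multiplication of the four matrices on the right-hand side, working from left to right. Since the proposition is purely algebraic and the factors are $2\times 2$ matrices whose entries are either scalars, $u$, or $u^{-1}$, no deeper machinery is needed: one only needs the relation $u \cdot u^{-1} = u^{-1}\cdot u = 1$.

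First I would multiply the first two factors to obtain
\begin{equation*}
\begin{pmatrix} 1 & u \\ 0 & 1 \end{pmatrix}
\begin{pmatrix} 1 & 0 \\ -u^{-1} & 1 \end{pmatrix}
=
\begin{pmatrix} 0 & u \\ -u^{-1} & 1 \end{pmatrix},
\end{equation*}
using $1 - u\cdot u^{-1} = 0$. Next, multiplying on the right by the third factor $\begin{pmatrix} 1 & u \\ 0 & 1 \end{pmatrix}$ yields
\begin{equation*}
\begin{pmatrix} 0 & u \\ -u^{-1} & 0 \end{pmatrix},
\end{equation*}
since the $(2,2)$ entry becomes $-u^{-1}\cdot u + 1 = 0$. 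Finally, multiplying on the right by $\begin{pmatrix} 0 & -1 \\ 1 & 0 \end{pmatrix}$ swaps the columns up to a sign and delivers $\mathrm{diag}(u, u^{-1})$.

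The only place where one must be careful is to respect the order of multiplication, since the ring $\mathcal{A}$ is not assumed to be commutative: $u$ acts from the appropriate side on itself and on $u^{-1}$ in each block entry. No obstacle is expected; the identity is purely formal and holds over any unital associative ring. The interest of the statement is conceptual rather than technical, as it exhibits $u \oplus u^{-1}$ as a product of four explicit elementary-type matrices and thereby underlies the role of the subgroup $\mathbf{O}(\mathcal{A})$ in the definition of $T_{1}(\mathcal{A})$ discussed above.
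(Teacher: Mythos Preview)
Your computation is correct, and this is exactly the direct verification one would expect; the paper in fact states the identity without proof, treating it as a standard fact to be checked by multiplying out the factors just as you do.
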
  
\par
This formula will play an important role in the construction of $T_{\ast}$-theory.

\section{Algebraic $T_{i}$ and $T_{i}^{loc}$-theory.}   

\subsection{Generalities and Notation.}  
Let $\mathcal{A}$ be a ring, with or without unit. If the unit will be needed, the unit will be adjoined.
\par
\begin{definition}  
Given the ring $\mathcal{A}$ we denote by $\mathbb{M}_{n}(\mathcal{A})$ the space of $n \times n$ matrices with entries in $\mathcal{A}$.  $\mathbb{M}_{n}(\mathcal{A})$ is a bi-lateral $\mathcal{A}$-module.
\par
Let  $\mathbb{I}demp_{n} \subset \mathbb{M}_{n}(\mathcal{A})$ be the subset of idempotents $p$ ($p^{2} = p$) of size $n$ with entries in $\mathcal{A}$.
\par
Suppose $\mathcal{A}$ has an unit. We denote by $\mathbb{GL}_{n}(\mathcal{A})$ the sub-space of matrices $M$ of size $n$, with entries in $\mathcal{A}$ which are invertible,  i.e. there exists the matrix $M^{-1} \in \mathbb{M}_{n}(\mathcal{A})$ such that $M M^{-1} = M^{-1} M = 1$. $\mathbb{GL}_{n}(\mathcal{A})$ is a non-commutative group under multiplication.
\end{definition}     
\begin{definition}  
The inclusions 
\par
-i) $\mathbb{I}demp_{n}(\mathcal{A}) \longrightarrow \mathbb{I}demp_{n+1}(\mathcal{A}) $
\begin{equation}   
p \mapsto
p^{'} =
\begin{pmatrix}
p  & 0 \\
0  &  0
\end{pmatrix}
\end{equation}  
\par
-ii) $\mathbb{GL}_{n}(\mathcal{A}) \longrightarrow \mathbb{GL}_{n+1}(\mathcal{A}) $
\begin{equation}  
u \mapsto 
u^{'} =
\begin{pmatrix}
u  & 0 \\
0  &  1
\end{pmatrix}
\end{equation}  
are  \emph{stabilisations}.
\par
Stabilisations of idempotents -i) and invertible matrices -ii) define two direct systems with respect to $n \in \mathbb{N}$.
\par 
-iii) If $p$ or $p^{'}$ are idempotents and one of them is an iterated stabilisation of the other we write  $p \sim_{s} p^{'}$.
\par
 If $u$ or $u^{'}$ are invertible matrices and one of them is an iterated stabilisation of the other we write   $u \sim_{s} u^{'}$.
\end{definition}  


\section{\emph{Localised rings}}   
We recall the definition of localised rings given before.
\begin{definition}       
\par
Let $\mathcal{A}$ be an unital associative ring.
The ring $\mathcal{A}$ is called \emph{localised ring} provided it is endowed with an additional structure satisfying the axioms (1) - (4) below.
\par  
Axiom 1. The underlying space $\mathcal{A}$ has a decreasing filtration by sub-spaces
$\{ \mathcal{A}_{\mu}   \}_{n \in \mathbb{N}} \subset \mathcal{A}$.
\par  
Axiom 2.
$\mathbb{C}. 1 \subset \mathcal{A}_{\mu}, \; for\;  any \; \; \mu \in \mathbb{N}$
\par 
Axiom 3.
 For any $\mu, \; \mu' \in \mathbb{N}^{+}$,   \;  $ \mathcal{A}_{\mu} \; . \; \mathcal{A}_{\mu'}   \subset \mathcal{A}_{\mathit{Min}(\mu,\mu')-1}$,  ($ \mathcal{A}_{0} \; . \; \mathcal{A}_{0} \subset   \mathcal{A}_{0}$).
\end{definition}  
\begin{definition}    \emph{Homomorphisms of localised rings.   Induced homomorphism.} 
\par
-i) A homomorphism from the localised ring  $ \mathcal{A} = \{ \mathcal{A}_{\mu}   \}_{\mu \in \mathbb{N}}$ to the localised ring  $ \mathcal{B} = \{ \mathcal{B}_{\mu}   \}_{\mu \in \mathbb{N}}$ is an ring homomorphism  
$\phi:   \mathcal{A} \longrightarrow  \mathcal{B}$  such that
$ \phi: \mathcal{A}_{\mu}    \longrightarrow \mathcal{B}_{\mu}$, for any $\mu \in \mathbb{N}$. 
\par
-ii) Let $f:  \mathcal{A} \longrightarrow  \mathcal{B}$  be a localised ring homomorphism.
\par
Let $f_{\ast}:  \mathbb{M}_{n}(\mathcal{A}_{\mu}) \longrightarrow  \mathbb{M}_{n}(\mathcal{B}_{\mu}) $ be the induced homomorphism which replaces any component $a_{ij}$ of the matrix $M$ with the component $f(a_{ij})$ of the matrix  $f_{\ast} (M)$. 
\end{definition}    
\begin{remark}   
The notion of localised ring differs from the notion of \emph{m-algebras} defined by Cuntz \cite{Cuntz}, in many respects. The sub-spaces $\mathcal{A}_{n}$ are not required to be \emph{algebras}, or even more, \emph{topological algebras}. In the Cuntz' definition of localised Banach algebras, the projective limit of these sub-algebras might be the zero Banach algebra. However, even in these cases, the corresponding \emph{local} $T$-theory could be not trivial.
\end{remark}   
\begin{remark}  
In this section any localisation of the algebra of bounded operators on the Hilbert space 
$H := L_{2} (M \times M)$ derives from a \emph{decreasing filtration} 
$\{ \mathcal{U}_{\mu} \}$,  towards the diagonal, of the space of bounded operators on 
$ M \times M$.  In such a case, if $\mu^{'} > \mu$,  one has the close inclusions
\begin{equation}  
L_{2}( \mathcal{U}_{\mu^{'}})  \subset  L_{2} ( \mathcal{U}_{\mu}) 
\end{equation}   
and  \emph{any internal authomorphism}  of the algebra 
$\mathbb{B}(\mathcal{U}_{\mu^{'}})$
is an internal authomorphism of the algebra  $\mathbb{B}(\mathcal{U}_{\mu})$
 (any auto-morphism of the algebra $L_{2}( \mathcal{U}_{\mu})$  will be extended by the identity on the complement of the Hilbert space $L_{2}( \mathcal{U}_{\mu^{'}})$).
 \end{remark}   
\begin{remark}  
The immediate application of this theory regards pseudo-differential operators.
The pseudo-differential operators on a compact smooth manifold form a localised (Banach) algebra. The filtration  is defined in terms of the support of the operators; the bigger the filtration order is, the smaller the supports of the operators towards the diagonal are.
\end{remark}  


\section{Local Mayer-Vietoris Diagrams.}   \label{MayerVietoris}   
In this section we \emph{adapt} Milnor's \cite{Milnor_Introduction} desciption of the first two algebraic $K$-theory groups \emph{to the case of localised rings}.
\par
Let  $\Lambda, \Lambda_{1}, \Lambda_{2}$ and $\Lambda'$ be rings with unit $1$ and let
\begin{equation}   
\begin{CD}
\Lambda            @>i_{1}>>   \Lambda_{1}\\
@VV{i_{2}} V                 @VV{j_{1}}V\\
\Lambda_{2}     @>j_{2}>>    \Lambda'
\end{CD}
\end{equation}   
be a commutative diagram of ring homomrphisms. All ring homomorphisms $f$ are assumed to satisfy  
$f(1) = 1.$ 
Any module in this paper is a left module.
\par
We assume the diagram satisfies the three conditions below.
\par
$\mathit{Hypothesis}$ 1.
All rings and ring homomorphisms are localised, see  
Definition 
\ref{localisedrings}. 
\par
$\mathit{Hypothesis}$ 2. $\Lambda$ is a \emph{local product} of $\Lambda_{1}$ and $\Lambda_{2}$, i.e. for any pair of elements
$\lambda_{1} \in \Lambda_{1, \mu}$  and  $\lambda_{2} \in \Lambda_{2, \mu}$ such that $j_{1}(\lambda_{1, \mu}) = j_{2}(\lambda_{2, \mu}) = \lambda'  \in \Lambda'_{\mu}$, there exists
only one element $\lambda_{n} \in \Lambda_{\mu}$ such that  $i_{1}(\lambda_{\mu}) = \lambda_{1, \mu}$ and 
$i_{2}(\lambda_{\mu}) = \lambda_{2, \mu}$.
\par
The ring structure in $\Lambda$ is defined by 
\begin{equation}   
(\lambda_{1}, \lambda_{2}) + (\lambda_{1}', \lambda_{2}') := (\lambda_{1} + \lambda_{1}' \;,\; \lambda_{2} + \lambda_{2}'), \;\;
(\lambda_{1}, \lambda_{2}) . (\lambda_{1}', \lambda_{2}') := (\lambda_{1} . \lambda_{1}' \;,\; \lambda_{2} . \lambda_{2}'), 
\end{equation}  
i.e. the ring operations in $\Lambda$ are performed component-wise.
\par
$\mathit{Hypothesis}$ 3. At least one of the homomorphisms $j_{1}$  and $j_{2}$ is surjective.
\begin{remark}   
-i) Any matrix $M \in \mathbb{M}_{n}(\Lambda)$ consists of a pair of matrices 
$(M_{1}, M_{2}) \in \mathbb{M}_{n}(\Lambda_{1}) \times \mathbb{M}_{n}(\Lambda_{2}) $ subject to the condition
$j_{1, \ast} M_{1} = j_{2, \ast} M_{2}$.   
 Any matrix $M \in \mathbb{M}_{n}(\Lambda)$ is called \emph{double} matrix.
 \par
 -ii) if  $(M_{1},  M_{2})$,   $(N_{1},  N_{2})$ are double matrices, ({\bf resp.} belong to $\mathbb{M}(\Lambda_{1})  \times \mathbb{M}(\Lambda_{2})$)
 and $(\lambda_{1}, \lambda_{2}) \in   \Lambda$, ({\bf resp.}  $(\lambda_{1}, \lambda_{2})  \in  \Lambda_{1} \times \Lambda_{2}$ )  then  relations (21.6)   induce onto the space of double matrices, ({\bf resp.} the space $\mathbb{M}(\Lambda_{1})  \times  \mathbb{M}(\Lambda_{2})$)
 the following relations
 $$
(\lambda_{1}, \lambda_{2}) \; (M_{1},  M_{2}) =   (\lambda_{1} \; M_{1},  \lambda_{2} \; M_{2})
 $$
 $$
(M_{1},  M_{2}) \; +  \; (N_{1},  N_{2})  =   (M_{1} + N_{1}, \; M_{2} + N_{2})
 $$
$$
(M_{1},  M_{2}) \; .  \; (N_{1},  N_{2})  =   (M_{1} \; .  \;N_{1}, \; M_{2} \;.  \;N_{2})
 $$
 \end{remark}   
\begin{definition} 
A commutative diagram satisfying Hypotheses 1. 2. 3. will be called \emph{local Mayer-Vietoris diagram}.
\end{definition}   
\emph{Standing Hypothesis}.  In the remaining part of this chapter we assume that the ring
$\mathcal{A}$ is localised. We assume also that $\Lambda_{1} = \Lambda_{2} = \mathcal{A}$
and that $J \subset \mathcal{A}$ is a bi-lateral ideal. Define $\Lambda := \{  (\lambda_{1}, \lambda_{2}) \in \mathcal{A} \oplus \mathcal{A}, \;such \; that \; \lambda_{1} - \lambda_{2} \in J   \}$. Denote $i_{\alpha} (\lambda_{1}, \lambda_{2}) = \lambda_{\alpha}$,  $\alpha = 1, or \;2$.
Denote also    $\Lambda^{'} :=  \mathcal{A}/ J$ and $j_{\alpha}(\lambda_{1}, \lambda_{2}) :=
\lambda_{\alpha} \; mod. \; J$. One has the ring short exact sequence
$$
 0 \longrightarrow \Lambda 
  \overset{(i_{1}, i_{2})}\longrightarrow 
 \Lambda_{1} \oplus \Lambda_{2}
 \overset{j_{1} - j_{2}}\longrightarrow 
 \Lambda^{'} \longrightarrow 0.
$$
We assume that the above scheme is a localised Mayer - Vietoris diagramme.

\section{Preparing the definition of $T_{0}^{loc}(\mathcal{A})$ and  $T_{1}^{loc}(\mathcal{A})$.}           
\begin{definition}  
 We assume the ring  $\mathcal{A}$ is localised.
 \par
We consider the space of  \emph{matrices with entries in}  $\mathcal{A}_{\mu}$ and we  denote it by
 $\mathbb{M}_{n}(\mathcal{A}_{\mu})$.
\par
Let $ \mathbb{I}demp_{n}(\mathcal{A}_{\mu}) $  
denote the space of idempotent matrices of size $n$ with entries in $\mathcal{A}_{\mu}$.
\par
Let $ \mathbb{GL}_{n}(\mathcal{A}_{\mu}) $  
denote the space of invertible matrices $M$ of size $n$ with the property that the entries of both $M$ and $M^{-1}$ belong to  $\mathcal{A}_{\mu}$. 
\par 
Let  $\mathbb{I}demp  (\mathcal{A}_{\mu}) := \injlim_{n} \mathbb{I}demp_{n} (\mathcal{A}_{\mu})$. 
\par
Let $\mathbb{GL}  (\mathcal{A}_{\mu}) := \injlim_{n} \mathbb{GL}_{n} (\mathcal{A}_{\mu})$. 
\end{definition}   
\par 

\begin{definition}  
-i) Two matrices $s, \; t \in \mathbb{M}_{n}(\mathcal{A})$ are called \emph{conjugated} and we write $s \sim_{c} t$, provided they are related through an inner auto-morphism, i.e. there exists $u, \; u^{-1} \in \mathbb{GL}_{n}(\mathcal{A})$ such that  
$s = u t u^{-1}$.
\par
-ii) Two matrices $s, \; t \in \mathbb{M}_{n}(\mathcal{A}_{\mu})$ will be called \emph{locally conjugated}  and we write $s \sim_{lc} t$ provided they are related through a \emph{local} inner auto-morphism defined by 
$u, u^{-1} \in \mathbb{GL}_{n}(\mathcal{A}_{\mu})$ such that  $s = u t u^{-1}$.
\par
In particular, 
\par
-ii.1)
two idempotents $p, \; q \; \in \mathbb{I}demp_{n}(\mathcal{A}_{\mu})$ are  \emph{locally conjugated} and we write $p \sim_{lc} q$  provided there exists $ u, u^{-1} \in \mathbb{GL}_{n}(\mathcal{A}_{\mu})$ such that   
$q = u \; p \; u^{-1}$ and
 \par
 -ii.2)
 two invertible matrices $s, \; t \in \mathbb{GL}_{n}(\mathcal{A}_{\mu})$ are \emph{locally conjugated} and we write $s \sim_{lc} t $ provided there exists $u, u^{-1} \in \mathbb{GL}_{n}(\mathcal{A}_{\mu})$ such that  
 $s = u t u^{-1}$.
 \par
 Sometimes, if it is clear from the context,  $\sim_{lc}$ will be simply denoted by $\sim_{c}$.
\end{definition}  
\begin{proposition}   
\par
-i)
 $\mathbb{I}demp_{n}(\mathcal{A}_{\mu})$ and  $\mathbb{GL}_{n}(\mathcal{A}_{\mu})$
 are semigroups with respect to the direct sum
 \begin{gather}  
 A + B :=
 \begin{pmatrix}
 A & 0\\
 0 & B
 \end{pmatrix}
 \end{gather}   
 \par
 -ii) The operations of stabilisation, conjugation and local conjugation of idempotents, resp. invertible matrices, commute.
\end{proposition}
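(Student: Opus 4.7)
The plan is to verify both parts by direct block-matrix calculation, using the filtration axioms of a localised ring only to check that the required matrices remain in the appropriate spaces $\mathbb{M}_n(\mathcal{A}_\mu)$ and $\mathbb{GL}_n(\mathcal{A}_\mu)$.

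For part (i), I would first establish closure under the direct sum. For idempotents, a block-diagonal computation gives
\begin{equation*}
(A+B)^{2} = \begin{pmatrix} A^{2} & 0 \\ 0 & B^{2} \end{pmatrix} = \begin{pmatrix} A & 0 \\ 0 & B \end{pmatrix} = A+B,
\end{equation*}
and the entries of $A+B$ lie in $\mathcal{A}_\mu$ because the entries of $A$ and $B$ do and $0 \in \mathcal{A}_\mu$. For invertible matrices, the candidate inverse is the block-diagonal matrix built from $A^{-1}$ and $B^{-1}$, whose entries lie in $\mathcal{A}_\mu$ by the definition of $\mathbb{GL}_n(\mathcal{A}_\mu)$. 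Associativity is immediate since block-diagonal concatenation is associative; after passing to the inductive limit over $n$, this gives a well-defined binary operation making the two sets semigroups. Here Axiom 3 is not needed, because the defining equations do not involve new multiplications outside $\mathcal{A}_\mu$.

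For part (ii), I would check the three pairwise commutations. If $p \in \mathbb{I}demp_n(\mathcal{A}_\mu)$ and $q = u p u^{-1}$ with $u, u^{-1} \in \mathbb{GL}_n(\mathcal{A}_\mu)$, then the stabilised matrices
\begin{equation*}
\tilde{p} = \begin{pmatrix} p & 0 \\ 0 & 0 \end{pmatrix}, \quad \tilde{q} = \begin{pmatrix} q & 0 \\ 0 & 0 \end{pmatrix}, \quad \tilde{u} = \begin{pmatrix} u & 0 \\ 0 & 1 \end{pmatrix}
\end{equation*}
satisfy $\tilde{q} = \tilde{u}\,\tilde{p}\,\tilde{u}^{-1}$ by the same block computation. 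The key point here is that $\tilde{u}$ genuinely lies in $\mathbb{GL}_{n+1}(\mathcal{A}_\mu)$, which uses Axiom 2: the unit $1$ belongs to $\mathcal{A}_\mu$ for every $\mu$. The analogous check for $\mathbb{GL}_n$ is identical. Compatibility of $\sim_c$ and $\sim_{lc}$ with stabilisation reduces to the same calculation; compatibility between $\sim_c$ and $\sim_{lc}$ is formal, as the second is the restriction of the first to conjugators belonging to $\mathbb{GL}_n(\mathcal{A}_\mu)$.

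I do not anticipate a serious obstacle; the statement is essentially a bookkeeping lemma packaging the compatibility of block-diagonal constructions with the three equivalence relations. The only delicate point worth flagging is that Axiom 3 would in principle decrease filtration under multiplication, but this does not enter the proof because the definitions of $\sim_c$ and $\sim_{lc}$ only require the endpoints $p, q, u, u^{-1}$ to lie in $\mathcal{A}_\mu$, not the intermediate products $up$ or $upu^{-1}$; the identity $q = upu^{-1}$ itself is an equation in the ambient ring $\mathcal{A}$. Once this is observed, every step is a one-line block-matrix verification.
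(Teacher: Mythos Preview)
Your proposal is correct and follows essentially the same route as the paper: both arguments reduce to elementary block-diagonal computations, with the paper emphasising that if $s_{1}\sim_{lc}s_{2}$ via $u_{1}$ and $t_{1}\sim_{lc}t_{2}$ via $u_{2}$ then $s_{1}\oplus t_{1}\sim_{lc}s_{2}\oplus t_{2}$ via $u_{1}\oplus u_{2}$, while you spell out in addition the role of Axiom~2 in keeping the stabilised conjugator inside $\mathbb{GL}_{n+1}(\mathcal{A}_{\mu})$. There is no substantive difference in method.
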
  
\par
 The spaces  $\mathbb{I}demp_{n}(\mathcal{A}) $,  $\mathbb{GL}_{n} (\mathcal{A}_{\mu}) $ are compatible with stabilisations.
 \par
 The direct sum addition of idempotents, resp. invertibles,  is compatible with the local conjugation equivalence relation. Indeed, if  $s_{1}, {s_{2}}$ are conjugated through an inner auto-morphism defined by the element $u_{1}$  ($s_{1} \sim_{l} {s_{2}}$) and
 $t_{1}, {t_{2}}$ are conjugated through an inner auto-morphism defined by the element $u_{2}$  ($t_{1} \sim_{l} {t_{2}}$), then 
 $(s_{1} + t_{1})   \sim_{l} (s_{2} + t_{2})$  are conjugated through the inner auto-morphism  $u_{1} \oplus u_{2}$. 
  \par
  With this observation, the associativity of the addition is now immediate.
  \par
These show that
  $ \mathbb{I}demp(\mathcal{A}_{\mu}) \; / \sim_{lc}$, resp.  $ \mathbb{GL}(\mathcal{A}_{\mu}) \;/ \sim_{l}$,
 is an associative semi-group.
\begin{proposition}   
The semi-groups 
 $ \mathbb{I}demp(\mathcal{A}) \; / \sim_{c}$,   $ \mathbb{GL}(\mathcal{A}_{\mu}) \;/ \sim_{lc}$ are commutative.
\end{proposition}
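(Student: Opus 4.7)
The plan is to exhibit an explicit conjugating matrix that implements the swap $A\oplus B \mapsto B\oplus A$, and to verify that this matrix is admissible under the local conjugation relation. The natural candidate is the block permutation
$$
\sigma_n \;=\; \begin{pmatrix} 0 & I_n \\ I_n & 0 \end{pmatrix} \;\in\; \mathbb{M}_{2n}(\mathcal{A}_\mu),
$$
whose entries are $0$ and $1$, both of which lie in $\mathcal{A}_\mu$ by Axiom 2 ($\mathbb{C}\cdot 1 \subset \mathcal{A}_\mu$). Observe that $\sigma_n^2 = I_{2n}$, so $\sigma_n$ is its own inverse and hence belongs to $\mathbb{GL}_{2n}(\mathcal{A}_\mu)$ in the strong sense required by the definition of local conjugation (both $\sigma_n$ and $\sigma_n^{-1}$ have entries in $\mathcal{A}_\mu$).

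First I would carry out the elementary block computation
$$
\sigma_n \begin{pmatrix} A & 0 \\ 0 & B \end{pmatrix} \sigma_n \;=\; \begin{pmatrix} B & 0 \\ 0 & A \end{pmatrix},
$$
valid for any $A, B \in \mathbb{M}_n(\mathcal{A}_\mu)$. Applied to a pair of idempotents $p, q \in \mathbb{I}demp_n(\mathcal{A}_\mu)$, this gives $p \oplus q \sim_{lc} q \oplus p$; applied to a pair of invertibles $u, v \in \mathbb{GL}_n(\mathcal{A}_\mu)$, this gives $u \oplus v \sim_{lc} v \oplus u$. In either case the conjugation is implemented inside $\mathbb{GL}_{2n}(\mathcal{A}_\mu)$, so it is a \emph{local} inner automorphism.

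To handle summands of unequal size, I would first use stabilisation $\sim_s$ to embed both elements into $\mathbb{M}_n(\mathcal{A}_\mu)$ for a common $n$, after which the permutation argument applies verbatim. Since the semigroup structure is defined on the quotient by $\sim_s \cup \sim_{lc}$, this preliminary stabilisation is absorbed into the equivalence class. Passing to the inductive limit in $n$, the identities $[p\oplus q] = [q\oplus p]$ and $[u\oplus v] = [v\oplus u]$ in $\mathbb{I}demp(\mathcal{A}_\mu)/\!\sim_{lc}$ and $\mathbb{GL}(\mathcal{A}_\mu)/\!\sim_{lc}$ yield commutativity of the semigroup operation.

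I do not expect a genuine obstacle here; the only point requiring vigilance is that the \emph{local} conjugation relation demands both the conjugating matrix and its inverse be in $\mathbb{GL}_{2n}(\mathcal{A}_\mu)$, not merely invertible over $\mathcal{A}$. This is precisely what forced the choice of a $\mathbb{C}$-valued permutation $\sigma_n$ rather than an arbitrary invertible over $\mathcal{A}$, and it is guaranteed by Axiom 2. The same argument, read with $\mathcal{A}$ in place of $\mathcal{A}_\mu$, gives commutativity of $\mathbb{I}demp(\mathcal{A})/\!\sim_c$.
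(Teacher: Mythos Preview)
Your proof is correct and follows essentially the same approach as the paper: exhibit an explicit scalar-valued block matrix that conjugates $A\oplus B$ to $B\oplus A$, and note that its entries lie in $\mathbb{C}\cdot 1\subset \mathcal{A}_\mu$ so that the conjugation is local. The only cosmetic difference is that the paper uses the rotation $\begin{pmatrix}0&-1\\1&0\end{pmatrix}$ (with inverse $\begin{pmatrix}0&1\\-1&0\end{pmatrix}$) rather than your self-inverse permutation $\sigma_n=\begin{pmatrix}0&I_n\\I_n&0\end{pmatrix}$; both choices work for the same reason.
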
  
\begin{proof}  
The result follows from the following identity valid for any two matrices
$A, B \in  \mathbb{M}_{n}( \mathcal{A}_{\mu}) $ 
\begin{equation}   
\begin{pmatrix}
A & 0\\
0 & B
\end{pmatrix}
=
\begin{pmatrix}
0 & -1\\
1 & 0
\end{pmatrix}
\begin{pmatrix}
B & 0\\
0 & A
\end{pmatrix}
\begin{pmatrix}
0 & 1\\
-1 & 0
\end{pmatrix} =
\begin{pmatrix}
0 & -1\\
1 & 0
\end{pmatrix}
\begin{pmatrix}
B & 0\\
0 & A
\end{pmatrix}
\begin{pmatrix}
0 & -1\\
1 & 0
\end{pmatrix} ^{-1}
,
\end{equation}   
which tells that  $(A + B) \sim_{c} (B+A)$, resp.  $(A + B) \sim_{lc} (B+A)$,
\end{proof}  
\par
For more information about the relationship between the classical algebraic $K$-theory and the \emph{local} $T$-theory  see \ref{21Introduction}.    

 
\section{Definition of $T_{0}(\mathcal{A}_{\mu})$ and $T_{0}^{loc}(\mathcal{A})$.}  
We suppose the stabilisation is involved without being specified. 
 \begin{definition}   
 Suppose $\mathcal{A}$ is a localised unital associative ring. We define
 \par
 -i)
  \begin{equation}  
 T_{0} (\mathcal{A}_{\mu}) \; = \;
 \mathbf{G} \; (\mathbb{I}demp_{n}\; (\mathcal{A}_{\mu}) \;/ \sim_{c}).
 \end{equation}    
\par
 -ii) 
 \begin{equation}  
 T_{0} (\mathcal{A}) \; = \; ProjLim_{\mu \in \mathbb{N}} \; T_{0} (\mathcal{A}_{\mu}). 
 \end{equation}    
 where $\mathbf{G}$ is \emph{Grothendieck completion}.
\end{definition}
\par
Any local inner automorphism induces the identity on $T_{0}^{loc} (\mathcal{A})$.
\begin{proposition}  
For any unital associative algebra $\mathcal{A}$,  trivially localised ($\mathcal{A}_{\mu} = \mathcal{A}$),  $\;T_{0}(\mathcal{A}) = K_{0}(\mathcal{A})$.
\end{proposition}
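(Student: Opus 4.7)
The plan is to identify \emph{by hand} the Grothendieck completion $T_{0}(\mathcal{A})$ with the classical projective-module description of $K_{0}(\mathcal{A})$, using the familiar dictionary between idempotents and finitely generated projective modules. Since the filtration is constant, the $\mathrm{ProjLim}_{\mu}$ disappears and we only need to treat a single level, so I would proceed level-wise.

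First I would construct the natural map
\begin{equation*}
\Phi \colon \mathbb{I}\mathrm{demp}(\mathcal{A}) \longrightarrow K_{0}(\mathcal{A}), \qquad e \in \mathbb{M}_{n}(\mathcal{A}) \;\longmapsto\; [\, e\,\mathcal{A}^{n} \,],
\end{equation*}
sending an idempotent to the class of the right $\mathcal{A}$-module it cuts out of $\mathcal{A}^{n}$. This map is additive for $\oplus$. I would then verify that $\Phi$ descends through the two equivalence relations: stabilization $e \mapsto e \oplus 0$ gives a canonically isomorphic module, and if $e = u f u^{-1}$ with $u \in \mathbb{GL}_{n}(\mathcal{A})$, then left multiplication by $u$ is an isomorphism $f\mathcal{A}^{n} \xrightarrow{\sim} e\mathcal{A}^{n}$. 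By the universal property of Grothendieck completion, $\Phi$ extends to a group homomorphism $\tilde{\Phi}\colon T_{0}(\mathcal{A}) \to K_{0}(\mathcal{A})$.

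Surjectivity of $\tilde{\Phi}$ is standard: every finitely generated projective $\mathcal{A}$-module is isomorphic to $e\mathcal{A}^{n}$ for some idempotent $e$, and $K_{0}(\mathcal{A})$ is generated as an abelian group by formal differences of such classes, which are exactly what the Grothendieck completion produces.

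The main obstacle is \textbf{injectivity}, i.e.\ showing that if $e\mathcal{A}^{n} \cong f\mathcal{A}^{m}$ as right $\mathcal{A}$-modules then $e$ and $f$ are stably conjugate in the sense of the relations $\sim_{s}\cup\sim_{c}$. Given such an isomorphism $\alpha$ with inverse $\beta$, I extend them by zero to matrices $\tilde\alpha \in \mathbb{M}_{m,n}(\mathcal{A})$, $\tilde\beta \in \mathbb{M}_{n,m}(\mathcal{A})$ satisfying $f \tilde\alpha = \tilde\alpha = \tilde\alpha e$, $e \tilde\beta = \tilde\beta = \tilde\beta f$, $\tilde\beta \tilde\alpha = e$, $\tilde\alpha \tilde\beta = f$. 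After stabilizing both $e$ and $f$ to common size $n+m$ by adding zero blocks, the block matrix
\begin{equation*}
U \;=\;
\begin{pmatrix}
\tilde\alpha & 1 - f \\
1 - e & \tilde\beta
\end{pmatrix}
\end{equation*}
is invertible (its inverse has the analogous form, as in the proof of the Whitehead/Murray--von Neumann lemma) and conjugates the stabilization of $e$ to that of $f$; this produces exactly the required $\sim_{c}$-equivalence in $\mathbb{M}_{n+m}(\mathcal{A})$. Passing from idempotents to formal differences, injectivity of $\tilde{\Phi}$ on $T_{0}(\mathcal{A})$ follows from the universal property, together with a second round of stabilization to absorb the cancellation module. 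With both directions established, $\tilde{\Phi}$ is an isomorphism $T_{0}(\mathcal{A}) \cong K_{0}(\mathcal{A})$.
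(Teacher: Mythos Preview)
Your argument is correct and is precisely the content of the reference the paper invokes: the paper's entire proof is the single line ``See Rosenberg \cite{Rosenberg} Lemma 1.2.1,'' and what you have written is a faithful unpacking of that lemma (isomorphism of image modules $\Leftrightarrow$ stable conjugacy of idempotents, via the explicit self-inverse block matrix built from a Murray--von~Neumann equivalence). So the approaches coincide; you have simply supplied the details the paper outsources.

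One minor cosmetic point: in your block matrix $U$, keep track of which corner carries which size so that $U$ genuinely lives in $\mathbb{GL}_{n+m}(\mathcal{A})$ and conjugates $e\oplus 0_{m}$ to $f\oplus 0_{n}$ (with a final permutation swap if you want $f\oplus 0_{n}$ rather than $0_{n}\oplus f$); the relations $e\tilde\beta=\tilde\beta$, $\tilde\alpha e=\tilde\alpha$, etc.\ then make $U^{2}=1$ and the conjugation goes through exactly as you say.
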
  
\begin{proof}
See Rosenberg \cite{Rosenberg}  Lemma 1.2.1.
\end{proof}
\section{ $T_{1}(\mathcal{A}_{\mu})$  and  $T_{1}^{loc} (\mathcal{A})$.}  
\par
As in the previous sub-section, we assume the stabilisation is involved without being specified.
\par
The equivalence class of the invertible element $u \in \mathbb{GL}(\mathcal{A})$ modulo conjugation, $[u]_{\sim_{c}}$, will be called the \emph{abstract Jordan canonical form} of $u$. The group
$T_{1}(\mathcal{A}_{\mu})$ we are going to define preserves much of the information provided by the abstract Jordan form. The classical definition of $K_{1}$ extracts a minimal part of the abstract Jordan form.  As the addition in the semi-group $\mathbb{GL}(\mathcal{A})$ is given by  the direct sum and the Jordan canonical form $J(u)$  (in the classical case of the algebra $\mathbb{GL}_{n}(\mathbb{R})$) $J$ behaves additively ($J(u\oplus v) = J(u) \oplus J(V)$\; modulo permutations of the Jordan blocks), given an arbitrary element  $u \in \mathbb{GL}(\mathcal{A})$, it is not reasonable to expect existence of an element $\tilde{u}$ such that $[u + \tilde{u}]_{\sim_{c}} = [1_{2n}]_{\sim_{c}}$. Given that we want  $T_{1}(\mathcal{A}_{\mu})$ to be a group, we introduce the group structure (opposite elements) forcibly. In the case of the classical $K_{1}$-theory, the class of the element $u^{-1}$ represents the opposite class,  $ - [u] \in K_{1}(\mathcal{A})$.
In our case, the opposite elements will be introduced by means of the \emph{T-completion} technique, to be explained next.
\section{$T$-completion}  \label{Tcompletion}
\begin{proposition}     
Let $\mathcal{S}$ be an additive commutative semi-group with zero element $0$. Let $I: \mathcal{S} \longrightarrow \mathcal{S}$ be an  \emph{additive involution}, ($I^{2} = Id$), such that $I(0) = 0$.
\par
Define the equivalence relation $\sim_{\mathcal{O}}$ in $\mathcal{S}$:   $u \sim_{\mathcal{O}} v$   iff these exist two
elements  $u_{0}, u_{1} \in \mathcal{S}$, such that the elements $u, v$, $\xi_{0} = u_{0} + I (u_{0})$ and 
$\xi_{1} = u_{1} + I (u_{1})$,   satisfy
\begin{equation}  
u + \xi_{0} = v +  \xi_{1}.
\end{equation}  
-i) The equivalence relation $\sim_{\mathcal{O}}$ is compatible with the addition in $\mathcal{S}$.
\par
-ii) $\mathcal{S}/ \sim_{\mathcal{O}}$ is a group.  
\par
-iii) Let  $[u]$  denote the $\sim_{\mathcal{O}}$-equivalence class of  $u$.
Then 
\begin{equation}  
- [u] = [I(u)].
\end{equation}  
\end{proposition}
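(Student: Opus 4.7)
The plan is to check the three assertions in order, the whole argument resting on the single observation that the set $\mathcal{O} := \{ x + I(x) : x \in \mathcal{S} \}$ is closed under addition. This is where the hypothesis that $I$ is additive enters: $(x + I(x)) + (y + I(y)) = (x+y) + I(x+y)$. In particular $0 = 0 + I(0) \in \mathcal{O}$, so $\mathcal{O}$ is a sub-semigroup of $\mathcal{S}$ containing zero.

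First I would verify that $\sim_{\mathcal{O}}$ is an equivalence relation. Reflexivity follows by taking $u_{0} = u_{1} = 0$, giving $\xi_{0} = \xi_{1} = 0$. Symmetry is immediate from the symmetry of the defining equation $u + \xi_{0} = v + \xi_{1}$. For transitivity, suppose $u + \xi_{0} = v + \xi_{1}$ and $v + \eta_{0} = w + \eta_{1}$ with $\xi_{i}, \eta_{i} \in \mathcal{O}$. Adding the two relations and cancelling $v$ formally (or rather, adding $\eta_{0}$ to the first and $\xi_{1}$ to the second and then chaining) gives $u + (\xi_{0} + \eta_{0}) = w + (\xi_{1} + \eta_{1})$, and the closure of $\mathcal{O}$ under addition places both coefficients back into $\mathcal{O}$. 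This proves $\sim_{\mathcal{O}}$ is an equivalence.

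For (i), compatibility with $+$ is proved the same way: if $u \sim_{\mathcal{O}} u'$ and $v \sim_{\mathcal{O}} v'$, summing the two defining equations and using closure of $\mathcal{O}$ yields $(u+v) + \zeta_{0} = (u'+v') + \zeta_{1}$ with $\zeta_{i} \in \mathcal{O}$. Hence addition descends to $\mathcal{S}/\sim_{\mathcal{O}}$, where associativity, commutativity and the role of $[0]$ as identity are inherited from $\mathcal{S}$.

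Finally, for (ii) and (iii) simultaneously, I would prove the single identity $u + I(u) \sim_{\mathcal{O}} 0$. Choosing the witnesses $u_{0} = 0$ (so $\xi_{0} = 0$) and $u_{1} = u$ (so $\xi_{1} = u + I(u)$) gives $(u + I(u)) + 0 = 0 + (u + I(u))$, which is exactly the required equality. Hence $[u] + [I(u)] = [u + I(u)] = [0]$, which shows both that every class has an inverse (so $\mathcal{S}/\sim_{\mathcal{O}}$ is a group) and that this inverse is $[I(u)]$. There is no genuine obstacle here; the only step that would fail without an additional hypothesis is the closure of $\mathcal{O}$, which is why the additivity of $I$ (rather than mere $I(0) = 0$) is indispensable.
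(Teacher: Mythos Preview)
Your proof is correct and follows essentially the same route as the paper's own argument: verify that $\sim_{\mathcal{O}}$ is an equivalence relation (reflexive, symmetric, transitive via adding the two witnessing equations), check compatibility with addition, and then exhibit $[I(u)]$ as the inverse of $[u]$ by observing $u + I(u) \sim_{\mathcal{O}} 0$. Your write-up is in fact slightly more careful than the paper's, since you make explicit that the transitivity step relies on $\mathcal{O}$ being closed under addition, which in turn is exactly where the additivity of the involution $I$ is used; the paper's proof invokes this closure implicitly without comment.
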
  
\begin{definition}  
Define
\begin{equation}  
\mathcal{O}(u) := u + I(u) \;\;for \;any \;u \in \mathcal{S}
\end{equation}  
\begin{equation}  
O (\mathcal{S}):= \; \{ \mathcal{O}(u) \;|\; u \in \mathcal{S} \;\}
\end{equation}  
\end{definition}   
\par
\begin{proof}   
-i) Obvious.
\par
-ii) $\sim$ is clearly reflexive and symmetric.
 If $u_{1} + \xi_{1} = u_{2} + \xi_{2}$ and $u_{2} + \xi_{3} = u_{3} + \xi_{4}$,  $u_{1}, u_{2}, u_{3}, u_{4} \in O(\mathcal{S})$,
then $u_{1} + (\xi_{1} + \xi_{3}) = u_{2} + (\xi_{2} + \xi_{3}) = u_{3} +  (\xi_{2} + \xi_{4})$, which shows that $\sim$ is transitive. Therefore $\sim$ is an equivalence relation.
\par
-iii) One the other side, for any element $u \in \mathcal{S}$, one has
$[u] = [u + 0] = [u] + [0] =  [0] + [u] $,
which shows that the class of $[0]$ is a the zero element of $\mathcal{S} / \sim_{\mathcal{O}}$.
\par
We have also $[u] + [I(u)] = [\mathcal{O}(u)] = 
[0 + \mathcal{O}(u)] =  [0]$ because $0  \sim (0 + \mathcal{O}(u))$; therefore, $-[u] = [I(u)]$ exists.
From this we get further $[\mathcal{O}(u)] = [u + I(u)] =  [u] + [I(u)] = 0$.
\end{proof}   
\section{Definition of $T_{1}(\mathcal{A}_{\mu})$  and  $T_{1}^{loc} (\mathcal{A})$} 
The construction of the groups  $T_{1}(\mathcal{A}_{\mu})$  and  $T_{1}^{loc} (\mathcal{A})$ 
involves the $T$-completion    \S \ref{Tcompletion}. We take
\begin{itemize}
\item
$\mathcal{S} = \mathbb{GL}(\mathcal{A}_{\mu}) \sim_{c} $
\item
the involution $I:  \mathbb{GL}(\mathcal{A}_{\mu}) \sim_{cl} \longrightarrow \mathbb{GL}(\mathcal{A}_{\mu}) \sim_{cl}$ is given by $I(u) = u^{-1}$ 
\item
the identity element $I \in \mathbb{GL}(\mathcal{A}_{\mu})/ \sim_{c}$ becomes the zero element of $\mathcal{S}$.
\end{itemize}
\par
Here $\sim_{l}$ indicates that the invertible elements and their conjugation occurs locally.
Note that if $u_{1} \sim_{cl} u_{2}$, then  $u^{-1}_{1} \sim_{cl} u^{-1}_{2}$.
\par
The next proposition summarises the properties of $\mathcal{O}(\mathcal{A}_{\mu})$ 
\begin{proposition}   
-i) The space $\mathcal{O}(\mathcal{A}_{\mu})$ 
is a commutative group; the zero element is the class if the identity
\par
-ii) the mapping
\begin{equation}  
\mathcal{O}: \mathbb{GL}(\mathcal{A}_{\mu})  \longrightarrow \mathcal{O}(\mathcal{A}_{\mu} )
\end{equation}  
is additive and commutes with \emph{local} conjugation
\begin{equation}  
\mathcal{O}(u_{1} + u_{2} )  =   \mathcal{O}( u_{1} )  +     \mathcal{O}( u_{2} )  
\end{equation}  
\begin{equation}  
\mathcal{O}(\lambda \; u \; \lambda^{-1})  =   \lambda  \; \mathcal{O}( u )   \; \lambda^{-1}, \;\; \lambda \in \mathbb{GL}(\mathcal{A}_{\mu}),
\end{equation}  
i.e. if $u_{1} \sim_{sl} u_{2}$ then  $\mathcal{O}(u_{1}) \sim_{sl} \mathcal{O}(u_{2})$ 
\par
-iii) 
\begin{equation}   
\mathcal{O}(u^{-1}) \sim_{c}  \mathcal{O}(u).
\end{equation}   
\par
-iv)
\begin{equation}  
\mathcal{O}(u_{1} u_{2}) \neq   \mathcal{O}(u_{1}) \mathcal{O}(u_{2}),
\end{equation}  
\end{proposition}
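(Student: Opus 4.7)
The plan is to verify the four parts separately, relying on two already-established tools: the commutativity-of-direct-sum Proposition (which gives $A \oplus B \sim_c B \oplus A$ via conjugation by a block-swap matrix) and the T-completion machinery of Section~\ref{Tcompletion}, applied to $\mathcal{S} = \mathbb{GL}(\mathcal{A}_\mu)/\sim_c$ with the involution $I(u) = u^{-1}$. Throughout, I would fix once and for all that $\mathcal{O}(u) = u \oplus u^{-1}$ and that all equalities in parts (ii) and (iii) are equalities in the quotient semigroup under $\sim_c$ (with stabilisation).

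For part (i), I would first check that $\mathcal{O}(\mathcal{A}_\mu)$ is closed under $\oplus$: the computation $\mathcal{O}(u) \oplus \mathcal{O}(v) = u \oplus u^{-1} \oplus v \oplus v^{-1}$, followed by swapping the two middle blocks (an inner conjugation by the appropriate block-permutation matrix), yields $u \oplus v \oplus u^{-1} \oplus v^{-1} = (u \oplus v) \oplus (u \oplus v)^{-1} = \mathcal{O}(u \oplus v)$. Commutativity is immediate from the same reordering; the element $\mathcal{O}(1) = 1 \oplus 1$ is the identity class modulo stabilisation and plays the role of zero. The group structure then descends from the previous T-completion Proposition, which supplies $-[\mathcal{O}(u)] = [\mathcal{O}(I(u))] = [\mathcal{O}(u^{-1})]$, so inverses live inside $\mathcal{O}(\mathcal{A}_\mu)$ itself.

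For part (ii), the additivity $\mathcal{O}(u_1 \oplus u_2) = \mathcal{O}(u_1) \oplus \mathcal{O}(u_2)$ is exactly the block-reordering already used for (i). The conjugation identity follows from $(\lambda u \lambda^{-1})^{-1} = \lambda u^{-1} \lambda^{-1}$, giving $\mathcal{O}(\lambda u \lambda^{-1}) = (\lambda \oplus \lambda)(u \oplus u^{-1})(\lambda^{-1} \oplus \lambda^{-1})$, which is $\lambda \mathcal{O}(u) \lambda^{-1}$ with $\lambda$ acting diagonally. Part (iii) is a one-line verification: conjugating $u \oplus u^{-1}$ by the swap $\left(\begin{smallmatrix} 0 & -1 \\ 1 & 0 \end{smallmatrix}\right)$ interchanges the two diagonal blocks, producing $u^{-1} \oplus u = \mathcal{O}(u^{-1})$.

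Finally, part (iv) only requires a counterexample. A direct block-multiplication gives $\mathcal{O}(u_1)\mathcal{O}(u_2) = (u_1 \oplus u_1^{-1})(u_2 \oplus u_2^{-1}) = u_1 u_2 \oplus u_1^{-1} u_2^{-1}$, whereas $\mathcal{O}(u_1 u_2) = u_1 u_2 \oplus u_2^{-1} u_1^{-1}$; equality would force $u_1$ and $u_2$ to commute. Any non-commuting pair in $\mathbb{GL}_2(\mathbb{C}) \subset \mathbb{GL}(\mathcal{A}_\mu)$ (the scalars sit inside $\mathcal{A}_\mu$ by Axiom 2) completes the counterexample. The main obstacle I anticipate is purely one of interpretation in part (i): pinning down precisely whether $\mathcal{O}(\mathcal{A}_\mu)$ is considered inside $\mathbb{GL}(\mathcal{A}_\mu)/\sim_c$ or already inside the T-completion, and making sure the inverse supplied by the T-completion lies back in $\mathcal{O}(\mathcal{A}_\mu)$; once that identification is made, the remaining three items are straightforward matrix manipulations.
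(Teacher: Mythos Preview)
The paper states this proposition without proof, treating it as a summary of properties that follow directly from the definitions. Your direct verification via block-permutation conjugations is correct and is exactly the argument that fills in the details; parts (ii), (iii), (iv) are handled cleanly.

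Your flagged concern about part (i) is legitimate and worth making explicit. As a subset of $\mathbb{GL}(\mathcal{A}_\mu)/(\sim_s \cup \sim_c)$, the set $\mathcal{O}(\mathcal{A}_\mu)$ is a commutative \emph{monoid} (closure and commutativity follow from your block-reordering argument, and $\mathcal{O}(1)$ acts as zero via stabilisation), but it need not have additive inverses: for $\mathcal{A}=\mathbb{C}$ and $u=2$, the element $\mathcal{O}(2)=\mathrm{diag}(2,1/2)$ cannot be cancelled by any $\mathcal{O}(v)$ modulo stabilisation and conjugation alone, since the eigenvalues $2$ and $1/2$ persist. The word ``group'' in the statement only becomes accurate after passing to the $T$-completion, where every element of $\mathcal{O}(\mathcal{A}_\mu)$ is identified with zero and the resulting subgroup is trivial. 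The paper's phrasing here is loose; your reading of it through the $T$-completion lens is the correct one, and since the paper offers no proof, there is nothing further to compare against.
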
     
\begin{definition}  
\begin{equation}   
T(\mathcal{A}_{\mu}) :=  T-completion \;of \;\mathbb{GL}(\mathcal{A}_{\mu})/ \sim_{lc}
\end{equation}  
and
\begin{equation}   
T^{loc}(\mathcal{A}) :=  \projlim_{\mu \in \mathbb{N}} T_{1}(\mathcal{A}_{\mu})
\end{equation}    
\end{definition}  
\begin{proposition}   
-i)
$T_{1}(\mathcal{A}_{\mu})$  and $T_{1}^{loc}(\mathcal{A})$ are commutative groups.
\par
-ii) The inverse element of $[u]$ is  $-[u] = [u^{-1}]$.
\end{proposition}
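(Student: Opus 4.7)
The plan is to reduce the statement to a direct application of the general $T$-completion proposition of \S\ref{Tcompletion}. What has to be done is to verify that the specific semigroup and involution appearing in the definition of $T_1(\mathcal{A}_\mu)$ meet the hypotheses of that proposition, and then to pass to the projective limit for the \emph{local} version.

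First, I would take $\mathcal{S} = \mathbb{GL}(\mathcal{A}_\mu)/\sim_{lc}$ with addition given by direct sum $\oplus$ and zero element the class of the identity matrix (after stabilisation). The previous propositions of this section establish that this operation is well-defined on $\sim_{lc}$-classes, associative, and commutative; the latter uses the conjugation identity with the skew permutation matrix shown in the earlier proof. Thus $\mathcal{S}$ is a commutative semigroup with zero.

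Next I would define $I\colon \mathcal{S}\to\mathcal{S}$ by $I([u]) := [u^{-1}]$ and check the three requirements of the $T$-completion proposition: well-definedness, involutivity, and additivity with $I(0)=0$. Well-definedness on $\sim_{lc}$-classes follows from the observation already recorded: $u_1 = v u_2 v^{-1}$ implies $u_1^{-1} = v u_2^{-1} v^{-1}$, with $v \in \mathbb{GL}(\mathcal{A}_\mu)$. Involutivity is $(u^{-1})^{-1}=u$. Additivity is the block identity $(u_1\oplus u_2)^{-1}= u_1^{-1}\oplus u_2^{-1}$. Finally $I([1])=[1^{-1}]=[1]$, i.e.\ $I(0)=0$. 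With these verified, part (i) of the $T$-completion proposition gives at once that $T_1(\mathcal{A}_\mu)$ is a commutative group, and part (iii) gives exactly $-[u]=[I(u)]=[u^{-1}]$, establishing both (i) and (ii) at the level of a fixed filtration index.

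For $T_1^{loc}(\mathcal{A})=\projlim_\mu T_1(\mathcal{A}_\mu)$ I would use that the inclusions $\mathcal{A}_{\mu+1}\hookrightarrow\mathcal{A}_\mu$ induce homomorphisms of commutative groups (since they send idempotents to idempotents, invertibles to invertibles, and commute with both direct sum and the involution $u\mapsto u^{-1}$). The projective limit of commutative groups taken along group homomorphisms is again a commutative group, and taking inverses commutes with the limit; so the inverse of the class represented by a coherent family $(u_\mu)$ is the class represented by $(u_\mu^{-1})$, yielding (i) and (ii) for $T_1^{loc}(\mathcal{A})$.

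The only delicate point, and the one I would flag as the main obstacle, is the verification that $I$ descends to $\sim_{lc}$-classes \emph{together with} the filtration constraint: one needs the conjugating element $v$ to lie in $\mathbb{GL}_n(\mathcal{A}_\mu)$ \emph{along with its inverse} so that the identity $u_1^{-1}=v u_2^{-1}v^{-1}$ stays inside the local invertibles at the same filtration level. This is built into the definition of $\mathbb{GL}_n(\mathcal{A}_\mu)$ (both the matrix and its inverse are required to have entries in $\mathcal{A}_\mu$), so the check goes through, but it is the place where Axiom~3 of \ref{localisedrings} is implicitly used; everything else in the proof is formal.
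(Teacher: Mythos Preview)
Your proposal is correct and follows essentially the same approach as the paper: the paper's proof is a one-line reference to the $T$-completion proposition of \S\ref{Tcompletion}, and you have carried out exactly that reduction, merely spelling out in detail the verifications (that $\mathcal{S}=\mathbb{GL}(\mathcal{A}_\mu)/\sim_{lc}$ is a commutative semigroup with zero and that $I([u])=[u^{-1}]$ is a well-defined additive involution fixing zero) which the paper leaves implicit. Your additional remarks on the projective limit and on the filtration constraint for $v,v^{-1}\in\mathbb{GL}_n(\mathcal{A}_\mu)$ are accurate and more careful than the paper itself.
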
  
\begin{proof}  
-i) and -ii) follow from the properties of the T-completion, see Proposition 21.5  \S \ref{Tcompletion}.  
\end{proof}   
 \vspace{1cm}
 For the computation of the  {\em local algebraic $T$-theory}, $ i = 0, 1$, of the algebra of complex numbers $\mathbb{C}$
see Teleman \cite{Teleman_arXiv_V}.
\par
For the computation of the \emph{local} cyclic homology of the algebra 
of Hilbert-Schmidt operators on simplicial spaces see  \cite{Teleman_arXiv_II}.
\par
For the local index theorem see \cite{Teleman_arXiv_IV},   \cite{Teleman_arXiv_I}. 

\section{Induced homomorphisms.}   
 \begin{definition}   
Let $f:  \mathcal{A} \longrightarrow  \mathcal{B}$  be a localised ring homomorphism.
\par
 Then the ring homomorphism $f$  induces homomorphisms
\begin{equation}   
f_{\ast}:  T_{0} (\mathcal{A}_{\mu})   \longrightarrow  T_{0} (\mathcal{B}_{\mu}),  \hspace{0.5cm}
f_{\ast}:  T_{0}^{loc} (\mathcal{A})  \longrightarrow  T_{0}^{loc} (\mathcal{B})
\end{equation}  
and
\begin{equation}   
f_{\ast}:  T_{0} (\mathcal{A}_{\mu})   \longrightarrow  T_{0} (\mathcal{B}_{\mu}),  \hspace{0.5cm}
f_{\ast}:  T_{1}^{loc} (\mathcal{A})  \longrightarrow  T_{1}^{loc} (\mathcal{B})
\end{equation}  
 \end{definition}   
\section{Constructing idempotents and invertible matrices over 
$\Lambda_{\mu}$.}  
We come back to the situation presented in the section 
\ref{MayerVietoris}.   
Here we produce \emph{local idempotents and invertibles} of the ring $\Lambda$.  These results will be used in the proof of the \emph{six terms exact sequence Theorem} \ref{exactsequence}.  
\subsection{Constructing idempotents over $\Lambda_{\mu}$.}   
\begin{theorem}  \label{Theorem1}   
Let $p_{1} \in \mathbb{I}demp_{n}(\Lambda_{1, \mu})$ be  an idempotent matrix with entries in $\Lambda_{1, \mu}$, resp. 
$p_{2} \in \mathbb{I}demp_{n}(\Lambda_{2, \mu})$, 
 such that
\begin{equation}  
j_{1\ast} (p_{1}) =  u \;  j_{2\ast} (p_{2}) \; u^{-1},
\end{equation}  
where $u \in \mathbb{GL}_{n}(\Lambda'_{\mu})$ is an invertible matrix.
\par
-i) Then there exists an idempotent double matrix $p \in \mathbb{I}demp(\Lambda_{\mu})$ such that
\begin{equation}  
i_{1\ast} (p) = p_{1} \oplus 0_{n}  \;\;  and  \;\; \; i_{2\ast} (p) = \tilde{p} 
\end{equation}  
where the idempotent $\tilde{p}_{2} \in \mathbb{M}_{2n}(\Lambda_{2, \mu})$ is conjugated to 
$p_{2} \oplus 0_{n}$ through an invertible matrix $\tilde{U}(u) \in \mathbb{GL}_{2n} (\Lambda_{2, \mu})$, that is  
\begin{gather}  
\tilde{p}_{2} = \tilde{U}(u) (p_{2} \oplus 0_{n}) \tilde{U}(u)^{-1}  \\
j_{2, \ast} (\tilde{p}_{2}) = (u \; j_{1, \ast} (p_{1}) \; u^{-1}) \oplus 0_{n} = j_{1, \ast} ({p}_{1} \oplus 0_{n})
\end{gather}   
-ii) 
The corresponding double matrix idempotent is denoted $p =(p_{1}, p_{2}, \tilde{U}(u))$. 
\end{theorem}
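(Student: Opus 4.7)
The plan is to exploit the identity in the factorisation proposition, which expresses $\bigl(\begin{smallmatrix} u & 0 \\ 0 & u^{-1}\end{smallmatrix}\bigr)$ as a product of four $2n\times 2n$ elementary block matrices: three of them are unipotent blocks containing $u$, $-u^{-1}$, or $u$ off-diagonal, and the fourth is the scalar permutation $\bigl(\begin{smallmatrix} 0 & -1 \\ 1 & 0\end{smallmatrix}\bigr)$. This factorisation is perfectly suited for lifting from $\Lambda'_\mu$ to $\Lambda_{2,\mu}$ since each individual factor is invertible in a trivial way (the inverse of a unipotent block is the negative unipotent block; the permutation factor has entries in $\mathbb{C}\cdot 1$, which lies in every $\Lambda_{2,\mu}$ by Axiom~2).

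Given this, I would proceed in the following steps. First, invoking Hypothesis~3, assume without loss of generality that $j_2$ is surjective. Choose any lifts $\hat u, \hat v \in \mathbb{M}_n(\Lambda_{2,\mu})$ of $u, u^{-1}\in \Lambda'_\mu$; these lifts need not be mutually inverse in $\Lambda_{2,\mu}$. Second, form the four $2n\times 2n$ matrices $\hat U_1,\dots,\hat U_4$ obtained by placing $\hat u$, $-\hat v$, $\hat u$ respectively in the off-diagonal slots of the three unipotent factors and using the scalar permutation for $\hat U_4$. Each $\hat U_k$ is visibly invertible in $\mathbb{GL}_{2n}(\Lambda_{2,\mu})$. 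Third, set
\begin{equation*}
\tilde U(u) := \hat U_1\,\hat U_2\,\hat U_3\,\hat U_4 \in \mathbb{GL}_{2n}(\Lambda_{2,\mu}),
\end{equation*}
whose image under $j_{2,\ast}$ is, by construction and by the factorisation identity, exactly $u \oplus u^{-1}$. Fourth, define
\begin{equation*}
\tilde p_2 := \tilde U(u)\,(p_2 \oplus 0_n)\,\tilde U(u)^{-1} \in \mathbb{I}demp_{2n}(\Lambda_{2,\mu}).
\end{equation*}
Then
\begin{equation*}
j_{2,\ast}(\tilde p_2) = (u\oplus u^{-1})\,\bigl(j_{2,\ast}(p_2)\oplus 0_n\bigr)\,(u\oplus u^{-1})^{-1} = \bigl(u\,j_{2,\ast}(p_2)\,u^{-1}\bigr)\oplus 0_n = j_{1,\ast}(p_1)\oplus 0_n,
\end{equation*}
the last equality holding by the hypothesis $j_{1,\ast}(p_1)=u\,j_{2,\ast}(p_2)\,u^{-1}$. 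Consequently the pair $(p_1\oplus 0_n,\,\tilde p_2)$ satisfies the matching condition of Hypothesis~2 and defines a double matrix $p\in\mathbb{M}_{2n}(\Lambda_\mu)$. Since idempotency is verified component-wise in the fibre product, $p$ is an idempotent and meets the conclusion; $i_{1,\ast}(p)=p_1\oplus 0_n$ and $i_{2,\ast}(p)=\tilde p_2$ by definition of $i_1,i_2$.

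The main obstacle I anticipate is purely bookkeeping on the filtration level. By Axiom~3 each product in $\Lambda_{2,\mu}$ drops the filtration index by one, so forming $\tilde U(u)$ through three multiplications and then conjugating $p_2\oplus 0_n$ by $\tilde U(u)$ and $\tilde U(u)^{-1}$ accumulates several index drops. The scalar factor $\hat U_4$ costs nothing by Axiom~2, but the three unipotent products cost genuine drops. One therefore has to either start from a sufficiently high initial index or, more cleanly, absorb the drops into the definition of \emph{local} by replacing $\mathcal{A}_\mu$ by $\mathcal{A}_{\mu+k}$ for an absolute constant $k$ determined by the factorisation; since $T_0^{loc}$ is defined by a projective limit, this shift is harmless. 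Aside from this shift, every individual step reduces to the classical Milnor argument for the Mayer--Vietoris $K$-theory sequence, and no further surprises are expected.
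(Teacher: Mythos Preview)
Your proposal is correct and follows essentially the same route as the paper: both arguments pass from $u$ to $u\oplus u^{-1}$, use the factorisation identity to write this as a product of three elementary block matrices and a scalar permutation, lift each factor separately via the surjectivity of $j_2$, and then conjugate $p_2\oplus 0_n$ by the resulting invertible $\tilde U(u)\in\mathbb{GL}_{2n}(\Lambda_{2,\mu})$ to obtain a double idempotent. The paper organises this into two preparatory lemmas (first the trivial case where $u$ itself lifts invertibly, then the general case via the $u\oplus u^{-1}$ trick), whereas you go straight to the stabilised case; your explicit filtration bookkeeping is actually more careful than the paper's, which merely remarks that the entries of the factorisation depend only on $u$ and $u^{-1}$.
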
 
\begin{definition} 
Let $\tilde{U}(u)$ be the lifting 
(\ref{factorisation}) 
of $u \otimes u^{-1}$  in $\Lambda_{2,\mu}$. 
\par
Denote by $p = (p_{1}, p_{2}, \tilde{U}(u))$ the idempotent over $\Lambda_{\mu}$ produced by Theorem \ref{Theorem1}.  
\end{definition}  
\par
Condition 
(50)   
says that $[j_{1 \ast }p_{1} ] = [j_{ \ast }p_{2} ] \in T_{0} (\Lambda'_{\mu})$.
Part -i) says that the pair  $([p_{1}\oplus 1_{n}], [\tilde{p}_{2}]) \in T_{0}(\Lambda_{1})  \oplus T_{0}(\Lambda_{2}) $ belongs to the image of
$(i_{1 \ast},  i_{2 \ast})$. 
\par
\begin{proof}  
To prove this theorem we will operate onto objects related to $\Lambda_{2, \mu}$.
\begin{lemma}  
Let $p_{1} = ( a_{ij} ) \in \mathbb{I}demp_{n} (\Lambda_{1, \mu})$  and $p_{2} = ( b_{ij} ) \in \mathbb{I}demp_{n} (\Lambda_{2, \mu})$ be idempotents.
\par
Suppose the idempotents $ j_{1\ast} (p_{1})$,  $ j_{2\ast} (p_{2}) $ are conjugated through an inner automorphism defined by
$u \in \mathbb{GL}_{n} (\Lambda'_{\mu})$, i.e.
\begin{equation}   
j_{1\ast} (p_{1}) =  u \;  j_{2\ast} (p_{2}) \; u^{-1}.
\end{equation}  
Assume, additionally, that the invertible element $u$ lifts to an invertible element 
$\tilde{u} \in \mathbb{GL}_{n}(\Lambda_{2, \mu})$ 
(i.e. $j_{2\ast} \tilde{u} = u$).
\par
Then $p =(p_{1}, p^{'}_{2}, u)   \in  \mathbb{I}demp_{n} (\Lambda_{\mu})$ is an idempotent given by the double matrix
\begin{equation}  
p = ( (a_{ij}, c_{ij})) ,
\end{equation}   
where
\begin{equation}   
 (a_{ij}) = p_{1} \in \mathbf{I}demp_{n}(\Lambda_{1, \mu}) \;\; and \;\; p^{'}_{2} = ( c_{ij} )  := 
 \tilde{u} \;   p_{2} \; \tilde{u}^{-1} \in \mathbb{I}demp_{n}  ( \Lambda_{2,  \mu}).
\end{equation}   
\end{lemma}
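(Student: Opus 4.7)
The plan is to exhibit the asserted idempotent explicitly as a double matrix in $\mathbb{M}_n(\Lambda_\mu)$, verify the fiber-product compatibility furnished by Hypothesis 2, and then confirm idempotence component-wise. First I would set $p'_2 := \tilde{u} \, p_2 \, \tilde{u}^{-1} \in \mathbb{M}_n(\Lambda_{2,\mu})$; this lies in the right space because $\tilde{u}, \tilde{u}^{-1} \in \mathbb{GL}_n(\Lambda_{2,\mu})$ by assumption and $p_2$ has entries in $\Lambda_{2,\mu}$.

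Next I would check the compatibility condition needed to glue $p_1$ and $p'_2$ into a double matrix over $\Lambda_\mu$. Applying $j_{2,*}$ and using $j_{2,*}(\tilde{u}) = u$ together with the standing hypothesis, one gets
\[
j_{2,*}(p'_2) \;=\; j_{2,*}(\tilde{u}) \, j_{2,*}(p_2) \, j_{2,*}(\tilde{u}^{-1}) \;=\; u \, j_{2,*}(p_2) \, u^{-1} \;=\; j_{1,*}(p_1).
\]
By the local product property (Hypothesis 2 of the Mayer--Vietoris diagram), applied entry-by-entry to the matrix components, the matched pair $(p_1, p'_2)$ lifts to a unique double matrix $p = ((a_{ij}, c_{ij})) \in \mathbb{M}_n(\Lambda_\mu)$.

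Then I would verify idempotence. Since the sum and product in $\Lambda$ are component-wise (see the formulas displayed just after Hypothesis 2), $p^2 = p$ reduces to the two component identities $p_1^2 = p_1$, which is given, and
\[
(p'_2)^2 \;=\; \tilde{u} \, p_2 \, \tilde{u}^{-1} \, \tilde{u} \, p_2 \, \tilde{u}^{-1} \;=\; \tilde{u} \, p_2^2 \, \tilde{u}^{-1} \;=\; \tilde{u} \, p_2 \, \tilde{u}^{-1} \;=\; p'_2,
\]
which is immediate. This shows $p \in \mathbb{I}demp_n(\Lambda_\mu)$ in the form claimed.

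The step I expect to be the main obstacle is the bookkeeping of the filtration level. Axiom 3 of Definition \ref{localisedrings} forces $\mathcal{A}_\mu \cdot \mathcal{A}_{\mu'} \subset \mathcal{A}_{\min(\mu,\mu')-1}$, so a literal reading of the triple product $\tilde{u} \, p_2 \, \tilde{u}^{-1}$ places its entries in $\Lambda_{2,\mu-2}$ rather than $\Lambda_{2,\mu}$. Either the conclusion must be interpreted up to a uniform shift in $\mu$, which is then absorbed into the projective limit defining $T_*^{loc}(\Lambda)$, or one must reindex the filtration so that $p_2$ and $\tilde{u}$ are chosen at a sufficiently high level (say $\mu+2$) that the conjugate returns to level $\mu$. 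Either convention makes the compatibility and idempotence computations above go through verbatim; settling it is the only non-formal point in the proof.
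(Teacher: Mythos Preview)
Your argument is correct and follows essentially the same route as the paper: define $p'_2 = \tilde u\, p_2\, \tilde u^{-1}$, check the compatibility $j_{1\ast}(p_1)=j_{2\ast}(p'_2)$ to obtain a double matrix, and verify idempotence component-wise via $(\tilde u\, p_2\, \tilde u^{-1})^2 = \tilde u\, p_2\, \tilde u^{-1}$. Your closing remark on the filtration shift is a point the paper leaves implicit; it is indeed absorbed by the projective limit and does not affect the argument here.
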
   
remark that in this lemma the size of the double matrix $p$ does not change. 
\par
$\mathit{Proof \; of \; Lemma} \; 35$.  
We use Remark 19-ii).
It is clear that the matrix $p$ given by 
(55) 
is an idempotent. In fact, to evaluate $p^{2}$ amounts to compute separately  the square of the first and second component matrices of the matrix $p$,  i.e. the squares of $(a_{ij})$ and $(c_{ij})$. These are 
\begin{equation}   
(a_{ij})^{2} = (a_{ij})  \;\;and\;\;
(c_{ij})^{2} =  (\; \tilde{u}  \; p_{2} \; \tilde{u}^{-1} \;)^{2} =  \tilde{u}  \; p_{2}^{2} \; \tilde{u}^{-1} = \tilde{u}  \; p_{2} \; \tilde{u}^{-1} = (c_{ij}).
\end{equation}   
\par
It remains to verify that  $p \in \mathbb{M}_{n} (\Lambda_{\mu})$, i.e.
$j_{1 \ast} (a_{ij}) = j_{2 \ast} (c_{ij}) $.  
This follows from (40) combined with (54)
\begin{equation}  
j_{1\ast} (p_{1}) =  u \;  j_{2\ast} (p_{2}) \; u^{-1} = j_{2 \ast} (\tilde{u} \;  p_{2} \; \tilde{u}^{-1} ) =  j_{2 \ast} (\tilde{p}_{2}).  
\end{equation}  
This ends the proof of Lemma 35.
\begin{lemma}  
Let $p_{1} = ( a_{ij} ) \in \mathbb{I}demp_{n} (\Lambda_{1, \mu})$  and $p_{2} = ( b_{ij} ) \in \mathbb{I}demp_{n} (\Lambda_{2, \mu})$ be idempotents.
\par
Suppose the idempotents $ j_{1\ast} (p_{1})$,  $ j_{2\ast} (p_{2}) $ are conjugated through an inner automorphism defined by
$u \in \mathbb{GL}_{n} (\Lambda^{'} _{\mu})$, i.e.
\begin{equation}   
j_{1\ast} (p_{1}) =  u \;  j_{2\ast} (p_{2}) \; u^{-1}.
\end{equation}  
Then
\par
-i) $ j_{1 \ast}(p_{1} \oplus 0_{n}) $ and $ j_{2 \ast}(p_{2} \oplus 0_{n})$ are conjugated by 
$U := u \oplus u^{-1}  \in  \mathbf{GL}_{2n}(\Lambda'_{\mu}) $,  i.e.
\begin{equation}  
 j_{1 \ast}(p_{1}) \oplus 0_{n} =
 j_{1 \ast}(p_{1} \oplus 0_{n})  = U \;  j_{2 \ast}(p_{2} \oplus 0_{n}) \; U^{-1} =  (\; u  \; j_{2 \ast}(p_{2}) \;  u^{-1} \; ) \oplus 0_{n}.
\end{equation}  
\par
-ii) Supposing that $j_{2}$ is surjective, the invertible matrix $U$ lifts to an invertible matrix $\tilde{U} \in \mathbb{M}_{2n} (\Lambda_{2, \mu})$. Let 
\begin{equation}  
\tilde{p}_{2} := \tilde{U}   \; (p_{2} \oplus 0_{n})  \;  \tilde{U}^{-1}.
\end{equation}    
Then
\begin{gather} 
j_{1, \ast} (p_{1} \oplus 0_{n}) = (u \; j_{2, \ast} ({p}_{2}) \; u^{-1})\oplus 0_{n} = j_{2, \ast} (\tilde{p}_{2} );
\end{gather}  
i.e.  the matrices $p_{1} \oplus 0_{n}, \; \tilde{p}_{2}$ form a double matrix idempotent in 
$\mathbb{I}demp_{2n}(\Lambda_{\mu})$, denoted $p := (p_{1}, p_{2}, u)  \in  \mathbb{I}demp_{2n}(\Lambda_{\mu})  $ and
 \begin{equation}  
 (i_{1 \ast}, i_{2 \ast} ) p = (p_{1}, \tilde{p}_{2}) 
 \end{equation}  
\par
-iii) The pair of idempotents  $p_{1}  \in \mathbb{I}demp_{n} (\Lambda_{1, \mu})$,   $p_{2}  \in \mathbb{I}demp_{n} (\Lambda_{2, \mu})$  is stably equivalent to the pair of idempotents $p_{1} \oplus 0_{n}$,  ${p}_{2} \oplus 0_{n}$ and ${p}_{2} \oplus 0_{n} \sim_{l}  \tilde{p}_{2}$. In other words
 \begin{equation}  
     ([p_{1}], [{p}_{2}]) =  ([p_{1}], [\tilde{p}_{2}])   \in T_{0} (\Lambda_{1, \mu }) \oplus T_{0}(\Lambda_{2}, \mu) .
 \end{equation}   
 \end{lemma}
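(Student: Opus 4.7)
The plan is to verify the three assertions in order, reducing (ii) to the explicit factorisation formula \eqref{factorisation} and reducing (iii) to (ii) combined with stabilisation.

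For part (i), I would simply unwind the block matrix product. Since $j_{1\ast}$ and $j_{2\ast}$ are ring homomorphisms applied entrywise, we have $j_{1\ast}(p_1\oplus 0_n)=j_{1\ast}(p_1)\oplus 0_n$ and $j_{2\ast}(p_2\oplus 0_n)=j_{2\ast}(p_2)\oplus 0_n$. Direct block multiplication with $U=u\oplus u^{-1}$ and $U^{-1}=u^{-1}\oplus u$ gives
\[
U\,(j_{2\ast}(p_2)\oplus 0_n)\,U^{-1}=(u\,j_{2\ast}(p_2)\,u^{-1})\oplus 0_n,
\]
which equals $j_{1\ast}(p_1)\oplus 0_n$ by hypothesis. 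This is a purely formal computation and poses no difficulty.

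For part (ii), the essential point is producing the lift $\tilde U\in\mathbb{GL}_{2n}(\Lambda_{2,\mu})$ of $U=u\oplus u^{-1}$. Here I would invoke the factorisation \eqref{factorisation} from the earlier Proposition, which writes $u\oplus u^{-1}$ as a product of four matrices whose only nontrivial off-diagonal entries are $u$, $-u^{-1}$, or $\pm 1$. Using the surjectivity of $j_2$ (Hypothesis 3), choose any lifts $\tilde u_1,\tilde u_2\in\Lambda_{2,\mu}$ of $u$ and $-u^{-1}$ respectively (no compatibility between them is required), and form the lifted factors
\[
\begin{pmatrix}1&\tilde u_1\\0&1\end{pmatrix},\quad
\begin{pmatrix}1&0\\\tilde u_2&1\end{pmatrix},\quad
\begin{pmatrix}1&\tilde u_1\\0&1\end{pmatrix},\quad
\begin{pmatrix}0&-1\\1&0\end{pmatrix}.
\]
Each of these is manifestly invertible in $\mathbb{M}_{2n}(\Lambda_{2,\mu})$ with an explicit inverse of the same form, so their product $\tilde U$ lies in $\mathbb{GL}_{2n}(\Lambda_{2,\mu})$ and satisfies $j_{2\ast}\tilde U=U$. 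Setting $\tilde p_2:=\tilde U(p_2\oplus 0_n)\tilde U^{-1}$ produces an idempotent in $\mathbb{M}_{2n}(\Lambda_{2,\mu})$, and applying $j_{2\ast}$ yields
\[
j_{2\ast}(\tilde p_2)=U\,(j_{2\ast}(p_2)\oplus 0_n)\,U^{-1}=j_{1\ast}(p_1\oplus 0_n)
\]
by part (i), so $(p_1\oplus 0_n,\tilde p_2)$ is indeed a double matrix idempotent over $\Lambda_\mu$. The main obstacle in the whole lemma sits exactly here: even though $u\in\mathbb{GL}_n(\Lambda'_\mu)$ need not admit an invertible lift to $\Lambda_{2,\mu}$, the direct sum $u\oplus u^{-1}$ always does, and making this rigorous without losing control of the localisation filtration is precisely what the factorisation trick accomplishes.

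For part (iii), the equivalence classes in $T_0(\Lambda_{2,\mu})$ match because $\tilde p_2={\tilde U}(p_2\oplus 0_n)\tilde U^{-1}$ with $\tilde U\in\mathbb{GL}_{2n}(\Lambda_{2,\mu})$ gives $\tilde p_2\sim_{lc}p_2\oplus 0_n$, and $p_2\oplus 0_n\sim_s p_2$ by stabilisation; similarly $p_1\oplus 0_n\sim_s p_1$. Hence $([p_1],[p_2])=([p_1\oplus 0_n],[\tilde p_2])$ in $T_0(\Lambda_{1,\mu})\oplus T_0(\Lambda_{2,\mu})$, as claimed. This last step is essentially bookkeeping once (ii) is established.
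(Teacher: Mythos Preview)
Your proposal is correct and follows essentially the same approach as the paper: part (i) is a direct block computation, part (ii) uses the factorisation \eqref{factorisation} of $u\oplus u^{-1}$ into elementary-type factors and lifts each factor through the surjection $j_2$ to obtain an invertible $\tilde U$, and part (iii) is bookkeeping from stabilisation and local conjugation. The only cosmetic difference is that the paper packages the final verification in (ii) (that $(p_1\oplus 0_n,\tilde p_2)$ is a double idempotent) as an appeal to the preceding Lemma~35, whereas you carry out that short check inline.
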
   
 Note that in this lemma, in comparison with the preceding 
 Lemma 35, 
 the size of the desired idempotent doubles; otherwise, the important modifications still occur onto matrices associated with  $\Lambda_{2, \mu}$. 
 \par
 $\mathit{Proof \; of \; Lemma \; 36}$.   
 Part -i) is clear.
 \par
 The proof of -ii) uses $\mathcal{O}_{2n}(u)$, where $u \in \mathbb{GL}_{n}(\mathcal{A}_{\mu})$. 
 Recall that 
 $\mathcal{O}_{2n}(u)$ may be written as a product of \emph{elementary matrices} and a \emph{scalar matrix} (\ref{factorisation})  
\begin{equation}  
U := \mathcal{O}_{2n}(u) = 
\begin{pmatrix}
u & 0 \\
0 & u^{-1}
\end{pmatrix}
=
\begin{pmatrix}
1 & u \\
0 & 1
\end{pmatrix}
\begin{pmatrix}
1 & 0 \\
- u^{-1} & 1
\end{pmatrix}
\begin{pmatrix}
1 & u \\
0 & 1
\end{pmatrix}
\begin{pmatrix}
0 & -1 \\
1 & 0
\end{pmatrix}
\in \mathbf{M}_{2n}(\Lambda_{2, \mu}).
\end{equation}    
Having in mind the localisation of the ring $\mathcal{A}$, it is important to note that the entries of the formula 
(65)  
depend on $u$ and $u^{-1}$ only.
\par
The proof will be complete after we will have shown that the invertible matrix $U$ has an invertible lifting $\tilde{U} \in \Lambda_{2, \mu}$. This follows from the properties of elementary matrices (valid for matrices and block matrices) to be discussed next.

\begin{definition}  \emph{Elementary Matrices}. 
\par
A matrix $E_{ij} (a) \in \mathit{GL}_{n}(\mathcal{A}_{\mu}) $ having all entries equal to zero, except for the diagonal entries equal to $1$ and just one $(i,j)$-entry $a \in \mathcal{A}_{\mu}$,  $0 \leq i \neq j \leq n$  is called  \emph{elementary matrix with entry in} $\mathcal{A}_{\mu}$. 
\par
The space of \emph{ elementary matrices with entries in} $\mathcal{A}_{\mu}$ is by definition 
\begin{equation}  
\mathit{E}_{n} (\mathcal{A}_{\mu}) \; := \{  E_{ij} (a) \; | \; 1 \leq i \neq j \leq n, \; a \in \mathcal{A}_{\mu} \}.
\end{equation}   
Let $\mathbf{E}_{n} (\mathcal{A})$ the sub-group generated by all elementary matrices and
\begin{equation}   
\mathbf{E}(\mathcal{A}) := \injlim_{n \in \mathbf{N}} \mathbf{E}_{n} (\mathcal{A}).
\end{equation}    
\end{definition}  
\par
\begin{lemma}   
Begin
The elementary matrices satisfy
\begin{equation}   
E_{i,j}(a) . E_{i,j}(b)  \; = \; E_{i,j}(a+b)  
\end{equation}  
\begin{equation}  
E_{i,j}(a)^{-1}  \; = \; E_{i,j}(-a),   \label{inverse}            
\end{equation}  
therefore $E_{i,j}(a)  \in \mathbb{GL} (\mathcal{A}_{\mu})$.
 Any elementary matrix is a commutator
\begin{equation}  \label{commutator}    
E_{ij} (A) = [ E_{ik}(A), E_{kj}(1)],   \hspace{0.2cm}  for \; any\; i, j, k \; distinct \; indices.
\end{equation}  
\end{lemma}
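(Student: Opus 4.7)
The plan is to reduce everything to the standard matrix-unit identity $E^{ab} E^{cd} = \delta_{bc} E^{ad}$, where $E^{ab}$ denotes the matrix with a $1$ in position $(a,b)$ and zeros elsewhere. Writing $E_{ij}(a) = I + a \cdot E^{ij}$ (valid since $i \neq j$ ensures the diagonal is unchanged), this turns all three assertions into short formal computations in the matrix ring, with no appeal to the filtration or localization structure beyond the fact that $E_{ij}(a)$ lies in $\mathbf{M}_n(\mathcal{A}_\mu)$ whenever $a \in \mathcal{A}_\mu$.

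First I would verify identity (68): compute
\begin{equation*}
E_{ij}(a) \cdot E_{ij}(b) = (I + aE^{ij})(I + bE^{ij}) = I + aE^{ij} + bE^{ij} + ab \cdot E^{ij}E^{ij}.
\end{equation*}
The last term vanishes because $E^{ij}E^{ij} = \delta_{ji} E^{ii} = 0$ (as $i \neq j$), leaving $I + (a+b)E^{ij} = E_{ij}(a+b)$. Then (69) is immediate: taking $b = -a$ gives $E_{ij}(a) \cdot E_{ij}(-a) = E_{ij}(0) = I$, and symmetrically on the other side. This also shows $E_{ij}(a) \in \mathbb{GL}_n(\mathcal{A}_\mu)$ because both $E_{ij}(a)$ and its inverse $E_{ij}(-a)$ have entries in $\mathcal{A}_\mu$.

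For the commutator identity (70), I would expand $[E_{ik}(A), E_{kj}(1)] = E_{ik}(A)\, E_{kj}(1)\, E_{ik}(-A)\, E_{kj}(-1)$ in two stages. Using $E^{ik}E^{kj} = E^{ij}$ (since the middle indices match), the first two factors give
\begin{equation*}
E_{ik}(A) E_{kj}(1) = I + AE^{ik} + E^{kj} + A E^{ij}.
\end{equation*}
Multiplying on the right by $E_{ik}(-A) = I - AE^{ik}$ and then by $E_{kj}(-1) = I - E^{kj}$, every cross term that arises contains a product $E^{\alpha\beta}E^{\gamma\delta}$ with $\beta \neq \gamma$ among the index pairs $\{(i,k),(k,j),(i,j)\}$, and so vanishes by the matrix-unit rule (one checks case by case that $E^{ik}E^{ik}$, $E^{kj}E^{ik}$, $E^{ij}E^{ik}$, $E^{kj}E^{kj}$, $E^{ij}E^{kj}$ are all zero when $i,j,k$ are distinct). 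The surviving terms collapse to $I + AE^{ij} = E_{ij}(A)$, as required.

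The only real care needed is the bookkeeping of which cross terms vanish, and noticing that the ring element $A \in \mathcal{A}_\mu$ may be non-commutative: this causes no trouble because $A$ never needs to be moved past another ring element — it only multiplies scalar matrix units. There is no genuine obstacle; the proof is essentially three direct expansions, and the distinctness hypothesis $i \neq j \neq k \neq i$ is precisely what kills the unwanted products.
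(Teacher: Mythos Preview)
Your proof is correct and is the standard direct verification via matrix units; the paper itself offers no proof of this lemma, simply stating the identities as well-known (they appear in the cited references \cite{Milnor_Introduction}, \cite{Bass}, \cite{Rosenberg}). One tiny slip: in your computation of $E^{ij}E^{ij}$ you wrote $\delta_{ji}E^{ii}$, whereas the matrix-unit rule gives $\delta_{ji}E^{ij}$---but since $\delta_{ji}=0$ for $i\neq j$ the conclusion is unaffected.
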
   
\par
We come back to the proof of 
Lemma  36 -ii).   
As $j_{2}$ is surjective, each of the entries of factors of the RHS of 
(65)  
has a lifting in $\mathbf{M}_{2n}(\Lambda_{2 \mu})$; each of the elementary matrix factor lifts as an invertible elementary matrix.  The last factor lifts as it is. 
Therefore, $U$ has an invertible lifting $\tilde{U} \in \mathbb{GL}_{2n}(\Lambda_{2, \mu})$. 
Lemma 35 
completes the proof of 
Lemma 36  -ii).  
\par
Lemma 36 -iii)  
follows from the definition of $T^{loc}_{0}(\Lambda)$.
This completes the proof of Lemma 21.2.  
\par
Theorem 33 
follows from Lemma  34 
combined with Lemma 35.  
\end{proof}   

\par
Theorem 33 
refers to the construction and description of idempotents over $\Lambda_{\mu}$. 
We need to extend Theorem 33 to elements of $T_{0}^{loc}(\mathcal{A}),$  i.e. to formal differences of \emph{local} idempotents. 
\par
\begin{lemma}    (compare  \cite{Milnor_Introduction}  Lemma 1.1)  
\par
Let $p_{1}, p_{2} \; q_{1}, q_{2} \in \mathbf{I}demp_{n}(\mathcal{A}_{\mu})$ be idempotents and let $[\;\;]$ denote their $T_{0}(\mathcal{A}_{\mu})$ class.
Suppose
\begin{equation}  
[p_{1}] - [p_{2}] \;=\; [q_{1}] - [q_{2}] \in T_{0}(\mathcal{A}_{\mu}).
\end{equation}  
\par
Then $p_{1} + q_{2}$ and $p_{2} + q_{1}$ are \emph{locally, stably} isomorphic,  $p_{1} + q_{2} \sim_{ls} p_{2} + q_{1}$.
\end{lemma}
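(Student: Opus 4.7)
\emph{Proof proposal.} The plan is to invoke the standard Grothendieck completion criterion for equal formal differences, and then to eliminate the auxiliary idempotent it produces by adding a complementary one. Every step must be performed inside $\mathbb{M}(\mathcal{A}_\mu)$, with conjugating matrices in $\mathbb{GL}(\mathcal{A}_\mu)$, so that the resulting equivalence is genuinely local.

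First, I recall that $T_0(\mathcal{A}_\mu) = \mathbf{G}(\mathcal{S})$ is the Grothendieck completion of the commutative semi-group $\mathcal{S} = \mathbb{I}demp(\mathcal{A}_\mu)/\sim_{lc}$. The standard description of $\mathbf{G}(\mathcal{S})$ says that $[a]-[b] = [c]-[d]$ in $\mathbf{G}(\mathcal{S})$ if and only if there is $r \in \mathcal{S}$ with $a + d + r = b + c + r$ in $\mathcal{S}$. Applied to the hypothesis, this produces an idempotent $r \in \mathbb{I}demp_N(\mathcal{A}_\mu)$, for some $N$, such that, after stabilisation,
\[
p_1 \oplus q_2 \oplus r \;\sim_{lc}\; p_2 \oplus q_1 \oplus r.
\]

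Next, to cancel $r$, I would introduce its complement $1_N - r \in \mathbb{I}demp_N(\mathcal{A}_\mu)$, which is legitimate because $1 \in \mathcal{A}_\mu$ by Axiom 2, and sum it to both sides (direct sum is compatible with $\sim_{lc}$). The pair $r \oplus (1_N - r)$ is locally conjugate to $1_N \oplus 0_N$ through the explicit involution
\[
u \;=\; \begin{pmatrix} r & 1_N - r \\ 1_N - r & -r \end{pmatrix},
\]
since $r(1_N - r) = (1_N - r)r = 0$ and $r^2 + (1_N - r)^2 = 1_N$ give $u^2 = I_{2N}$; hence $u = u^{-1} \in \mathbb{GL}_{2N}(\mathcal{A}_\mu)$ and a direct computation yields $u\,(1_N \oplus 0_N)\,u^{-1} = r \oplus (1_N - r)$. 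Substituting in the previous display produces
\[
p_1 \oplus q_2 \oplus 1_N \oplus 0_N \;\sim_{lc}\; p_2 \oplus q_1 \oplus 1_N \oplus 0_N,
\]
and absorbing the $0_N$ factor into $\sim_s$ delivers the claim $p_1 + q_2 \sim_{ls} p_2 + q_1$.

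The main delicate point is the localisation bookkeeping: by Axiom 3 a product of two elements of $\mathcal{A}_\mu$ falls into $\mathcal{A}_{\mu-1}$, so a generic invertible matrix over $\mathcal{A}_\mu$ need not have its inverse at the same filtration level. This obstruction is bypassed precisely because $u$ is an involution: its inverse coincides with $u$ itself, whose entries are $r,\ 1_N - r,\ -r$ and $0$, all in $\mathcal{A}_\mu$. A parallel care is needed to guarantee that the Grothendieck witness $r$ lies already at the given filtration level, which is ensured by the construction of $\mathcal{S}$ as the inductive limit over $n$ of $\mathbb{I}demp_n(\mathcal{A}_\mu)/\sim_{lc}$.
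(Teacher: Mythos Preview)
Your proposal is correct and follows essentially the same approach as the paper: obtain a Grothendieck witness idempotent, then cancel it by adding its complement and replacing the pair by a trivial idempotent. The paper's proof is terser at the key step---it simply writes $s+(1-s)$ and passes to $1_{2m}$ without displaying a conjugation---whereas you make this explicit via the involution $u=\begin{pmatrix} r & 1_N-r\\ 1_N-r & -r\end{pmatrix}$; the paper establishes the same fact separately (with the sign-free variant $\begin{pmatrix} p & 1-p\\ 1-p & p\end{pmatrix}$) in a later lemma. Your attention to the filtration level of $u^{-1}$ is well placed and in the spirit of the paper's emphasis on bounding the number of multiplications.
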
   
\begin{proof} 
The stabilisation and Grothendieck completion imply that there exists an idempotent 
$s \in  \mathbb{I}demp_{m}(\mathcal{A}_{\mu})$ such that the idempotents 
$$p_{1} + q_{2} + s,   \;\;\;\;  p_{2} + q_{1}+ s$$ 
are \emph{locally, stably} isomorphic. We assume that the idempotent $s$ is already sufficiently stabilised.
This means there exists an invertible matrix $u \in \mathbb{GL}_{2n+m}(\mathcal{A}_{\mu})$ 
such that
$$p_{1} + q_{2} + s,   \; = \; u \; ( p_{2} + q_{1}+ s ) \; u^{-1}.$$ 
We add to both sides of this equality the idempotent $1_{m} - s$ and we extend $u$ to be the identity on the last summand. 
We get  
$$p_{1} + q_{2} + s + (1-s) ,   \; = \; u \; ( p_{2} + q_{1}+ s +(1-s) ) \; u^{-1}.$$ 

From this we get further
$$p_{1} + q_{2} + 1_{2m} ,   \; = \; u \; ( p_{2} + q_{1} + 1_{2m} ) \; u^{-1},$$ 
that is, the idempotents $p_{1} + q_{2} $,     $p_{2} + q_{1} $ are \emph{locally, stably} isomorphic
$$p_{1} + q_{2} \; \sim_{sl}    p_{2} + q_{1}. $$ 
\end{proof}   
\begin{lemma}  
Let $p, q \in \mathbb{I}demp_{n}(\mathcal{A}_{\mu})$ be idempotents.
Suppose
\begin{equation}  
[p] - [1_{n}] \;=\; [q] - [1_{n}] \in T_{0}(\mathcal{A}_{\mu}).
\end{equation}  
Then
\par
-i)  $p$ and $q$ are \emph{locally, stably} isomorphic,  $p \sim_{ls} q$,
\par
-ii)  there exists an $N \in \mathbb{N}$ and an $u \in \mathbb{GL}_{n+N}(\mathcal{A}_{\mu})$ such that
\begin{equation}  
p + 1_{N} = u \; q \; u^{-1} + 1_{N} =  u \; (q + 1_{N})\; u^{-1}.
\end{equation}  
\end{lemma}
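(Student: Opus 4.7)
The strategy is to obtain both parts of the statement as a direct corollary of the preceding lemma, by choosing its parameters so as to absorb the two explicit copies of $1_n$ in the hypothesis.

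First, I set $p_1 := p$, $p_2 := 1_n$, $q_1 := q$, $q_2 := 1_n$ in the preceding lemma. The assumption $[p_1] - [p_2] = [q_1] - [q_2] \in T_0(\mathcal{A}_\mu)$ of that lemma is then exactly our hypothesis $[p] - [1_n] = [q] - [1_n]$, so its conclusion yields $p + 1_n \sim_{ls} 1_n + q$. Composing with the commutativity-up-to-local-conjugation of the direct sum established earlier (via the swap $\begin{pmatrix} 0 & -1 \\ 1 & 0 \end{pmatrix}$ used to prove that $\mathbb{I}demp(\mathcal{A}_\mu)/\!\sim_{lc}$ is commutative), I obtain $p + 1_n \sim_{ls} q + 1_n$. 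After absorbing the auxiliary zero summand produced by the $\sim_{ls}$ relation into a larger $1_N$-block, this gives part (i).

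For part (ii), I would unwind the explicit invertible matrix constructed inside the proof of the preceding lemma. There an element $u_0 \in \mathbb{GL}_{2n+m}(\mathcal{A}_\mu)$ was produced with $p + 1_n + s = u_0(1_n + q + s)u_0^{-1}$, and was then extended by the identity on a further $\mathbb{M}_m$-slot carrying $1_m - s$, so that the pair $s + (1_m - s)$ could be replaced by $1_{2m}$. Composing this extended invertible with the swap interchanging the first two $\mathbb{M}_n$-summands yields an element $u \in \mathbb{GL}_{n+N}(\mathcal{A}_\mu)$, with $N := n + 2m$, satisfying $p + 1_N = u(q + 1_N)u^{-1}$. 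Because both the swap and the identity extension act trivially on the $1_N$-block (mapping $1_N$ to itself), the conjugation factors as $u(q+1_N)u^{-1} = uqu^{-1} + 1_N$, which is exactly the middle expression demanded by the lemma.

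The main obstacle is organisational rather than conceptual: one must verify that every factor entering $u$ — the invertible $u_0$ from the preceding lemma, the swap matrix, and the two identity extensions — has entries in the same filtration level $\mathcal{A}_\mu$, so that the composite $u$ is genuinely an element of $\mathbb{GL}_{n+N}(\mathcal{A}_\mu)$; and that $N$ is chosen large enough to absorb both the Grothendieck-stabilisation datum $m$ and the extra $n$ introduced by the commutativity swap. Once these block sizes and filtration levels are tracked consistently, both (i) and (ii) follow without further input.
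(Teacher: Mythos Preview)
Your proposal is correct and follows essentially the same route as the paper: apply the preceding lemma with $p_1=p$, $p_2=1_n$, $q_1=q$, $q_2=1_n$ to obtain $p+1_n \sim_{ls} 1_n+q$, then conclude $p \sim_{ls} q$. The paper's own proof is a two-line invocation of that lemma (``Lemma 39 says that the idempotents $p+1_n$, $q+1_n$ are locally, stably isomorphic; this means $p$ and $q$ are locally, stably isomorphic; part (ii) tells precisely this''), so your version is in fact more detailed --- you make explicit the commutativity swap, the absorption of the stabilisation datum into $1_N$, and the filtration-level bookkeeping, none of which the paper spells out.
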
   
\begin{proof}   
-i) Lemma 39  says that  the idempotents $p + 1_{n}$,  $q + 1_{n}$ are \emph{locally, stably} isomorphic. This means that the idempotents $p$ and  $q$ are \emph{locally, stably} isomorphic. Part -ii) tells precisely this.
\end{proof}  

\begin{theorem}   \label{theorem21.2}  
Let $p_{ij}$ be idempotents 
$$[p_{1}] = [p_{11}] - [p_{12}]       \in T_{0}(\Lambda_{1, \mu})$$  
$$[p_{2}] = [p_{21}] - [p_{22}]      \in T_{0}(\Lambda_{2, \mu})$$
\par
with the property that
$$j_{1 \ast} [p_{1}] = j_{2 \ast} [p_{2}] \in T_{0}(\Lambda^{'}_{\mu}).$$
\par
Then there exists $[p] = [p_{01}] - [p_{02}] \in T_{0}(\Lambda_{\mu})$ with the property that
$$i_{1 \ast} [p] = [p_{1}] \;\; and \;\; i_{2 \ast} [p] = [p_{2}].$$
\end{theorem}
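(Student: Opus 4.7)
My plan is to reduce the statement, in three stages, to the conclusion already produced by Theorem~\ref{Theorem1} (the construction of double-matrix idempotents over $\Lambda_\mu$ from compatible conjugacy data).

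\textbf{Stage 1 (normalization).} First I would rewrite both formal differences in the canonical form $[P_i] - [1_n]$ with a common $n$. This relies on the identity $[q] + [1_m - q] = [1_m]$ in $T_0(\Lambda_{i\mu})$ for every $q \in \mathbb{I}demp_m(\Lambda_{i\mu})$: the block matrix
$$U(q) := \begin{pmatrix} 1_m - q & q \\ q & 1_m - q \end{pmatrix}$$
is self-inverse ($U(q)^2 = 1_{2m}$, using $q^2 = q$), its entries lie in $\Lambda_{i\mu}$ (Axiom~2), and a direct computation gives $U(q)(1_m \oplus 0_m) U(q)^{-1} = (1_m - q) \oplus q$. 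Stabilizing $p_{12}, p_{22}$ to a common size $n$ and setting $P_i := p_{i1} \oplus (1_n - p_{i2})$, I then have $[p_i] = [P_i] - [1_n]$ in $T_0(\Lambda_{i\mu})$. The hypothesis $j_{1\ast}[p_1] = j_{2\ast}[p_2]$ therefore reduces, after cancelling the common $[1_n]$ in the abelian group $T_0(\Lambda'_\mu)$, to the equality $[j_{1\ast} P_1] = [j_{2\ast} P_2]$ of ordinary idempotent classes.

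\textbf{Stage 2 (passage to an explicit local conjugacy and application of Theorem~\ref{Theorem1}).} Next I would invoke the earlier lemma that converts a $T_0$-equality of the shape $[p] - [1_n] = [q] - [1_n]$ into a genuine local stable conjugation, applied inside $\Lambda'_\mu$. This yields an integer $N$ and an invertible $u \in \mathbb{GL}(\Lambda'_\mu)$ such that
$$j_{1\ast}(P_1 \oplus 1_N) \;=\; u \cdot j_{2\ast}(P_2 \oplus 1_N) \cdot u^{-1}.$$
With this data, Theorem~\ref{Theorem1} applied to the pair $(P_1 \oplus 1_N, P_2 \oplus 1_N, u)$ produces a double-matrix idempotent $p_{01} \in \mathbb{I}demp(\Lambda_\mu)$ satisfying $i_{1\ast}[p_{01}] = [P_1] + [1_N]$ and $i_{2\ast}[p_{01}] = [P_2] + [1_N]$.

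\textbf{Stage 3 (assembly).} Since $1_{n+N}$ is itself a double-matrix idempotent over $\Lambda_\mu$ (the diagonal pair $(1_{n+N}, 1_{n+N})$ lies in $\Lambda_\mu$ by Axiom~2), the class
$$[p] := [p_{01}] - [1_{n+N}] \;\in\; T_0(\Lambda_\mu)$$
satisfies $i_{\alpha\ast}[p] = [P_\alpha] + [1_N] - [1_{n+N}] = [P_\alpha] - [1_n] = [p_\alpha]$ for $\alpha = 1,2$, as required, with $p_{01}$ playing the role of $p_{01}$ and $1_{n+N}$ the role of $p_{02}$. The main obstacle I anticipate is the filtration bookkeeping: both the invertible $u$ and its lift $\tilde U(u)$ must remain at filtration index $\mu$. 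This is precisely what forces us to use the explicit factorisation~(\ref{factorisation}) rather than an abstract invertibility argument, since that formula lifts $u \oplus u^{-1}$ using only $u$, $u^{-1}$ and constant elementary blocks, each of which is supplied inside the correct $\Lambda_{2\mu}$ by Hypothesis~3 (surjectivity of $j_2$) and Axiom~2.
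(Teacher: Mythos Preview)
Your proposal is correct and follows essentially the same route as the paper: normalise each class to the form $[P_i]-[1_n]$ via the identity $[q]+[1-q]=[1]$, use the lemma that an equality $[p]-[1_n]=[q]-[1_n]$ in $T_0$ forces a genuine local stable conjugation in $\Lambda'_\mu$, feed that conjugation into Theorem~\ref{Theorem1}, and subtract a trivial idempotent to define $p_{02}$. Your treatment is in fact slightly more careful than the paper's about the filtration bookkeeping and the explicit conjugating matrix $U(q)$.
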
  
\begin{proof}   
We may describe  the two $T$-theory classes differently

$$[p_{1}] = [p_{11}] - [p_{12}] = [p_{11}+ (1 - p_{12} )] - [p_{12} + (1-p_{12})] = [p^{'}_{12}] - [ 1_{n} ] \in T_{0}(\Lambda_{1, \mu})$$  
and
$$[p_{2}] = [p_{21}] - [p_{22}]  = [p_{21}+ (1 - p_{22}) ] - [p_{22} + (1-p_{22})] = [p^{'}_{22}] - [ 1_{n} ] \in T_{0}(\Lambda_{2, \mu}).$$
\par
Then
$$j_{1 \ast} [p_{1}] =  j_{1 \ast }(  [p^{'}_{12}] - [ 1_{n} ] ) = ( j_{1 \ast }  [p^{'}_{12}]) - [ 1_{n} ] $$ 
and 
$$ j_{2 \ast} [p_{2}]  =  j_{2 \ast }(  [p^{'}_{22}] - [ 1_{n} ] ) = ( j_{2 \ast }  [p^{'}_{22}]) - [ 1_{n} ]. $$ 
The hypothesis says that 
$$ ( j_{1 \ast }  [p^{'}_{12}]) - [ 1_{n} ] = ( j_{2 \ast }  [p^{'}_{22}]) - [ 1_{n} ]. $$ 
\par
Lemma 39  
says that the idempotents $ j_{1 \ast }  [p^{'}_{12}]$    $ j_{2 \ast }  [p^{'}_{22}]$  are \emph{locally, stably} isomorphic. Now we are in the position to use Theorem 21.1. Let $u \in \mathbb{GL}(\Lambda_{2}(\mathcal{A}_{\mu}))$; consider the conjugation
\begin{equation}  
 j_{1 \ast }  (p^{'}_{12}) = u  \; j_{2 \ast }  (p^{'}_{22}) \; u^{-1}.
\end{equation}  
Theorem 21.1 provides the idempotent $$p = (j_{1 \ast}  (p^{'}_{12}), (j_{2 \ast}  (p^{'}_{2}),u). $$
The desired idempotents are
$$
p_{10} =  p = (j_{1 \ast}  (p^{'}_{12}), (j_{2 \ast}  (p^{'}_{2}),u) \in \mathbb{I}demp(\Lambda_{\mu})
$$
$$
p_{20} = 1_{N}   \in \mathbb{I}demp(\Lambda_{\mu}).
$$
\end{proof}   

\subsection{Constructing invertibles over $\Lambda_{\mu}$.}  
\par
\begin{theorem}  \label{Theorem2}   
Let $s_{1} \in \mathbb{GL}_{n}(\Lambda_{1, \mu})$,   $s_{2} \in \mathbb{GL}_{n}(\Lambda_{2, \mu})$,  be invertible matrices with entries in $\Lambda_{1, \mu}$, resp. $\Lambda_{2, \mu}$, such that
\begin{equation}  
j_{1\ast} (u_{1}) =  u \;  j_{2\ast} (u_{2}) \; u^{-1},
\end{equation}  
where $u \in \mathbb{GL}_{n}(\Lambda'_{\mu})$.
\par
-i) Then there exists an invertible matrix $s \in \mathbb{GL}_{2n}(\Lambda_{\mu})$ such that
\begin{equation}  
i_{1\ast} (s) = s_{1} \oplus 1_{n}  \;\;  and  \;\; \; i_{2 \ast} (s) = \tilde{s}_{2} 
\end{equation}  
where the invertible matrix $\tilde{s}_{2} \in \mathbb{GL}_{2n}(\Lambda_{2, \mu})$ is conjugated to $s_{2} \oplus 1_{n}$ through an inner auto-morphism defined by the invertible matrix $\tilde{U} \in \mathbb{GL}_{2n} (\Lambda_{2, \mu})$, that is  
\begin{gather} 
\tilde{s}_{2} = \tilde{U} (s_{2}\oplus 1_{n})  \tilde{U}^{-1} \\
j_{2, \ast} \tilde{s}_{2} = (u \; j_{2, \ast} (s_{2} ) \; u^{-1}) \oplus 1_{n} = j_{1, \ast} ({s}_{1} \oplus 1_{n})
\end{gather}    
\par
-ii) The corresponding invertible double matrix  $s$ is denoted $s =(s_{1}, s_{2}, \tilde{U})$. 
\end{theorem}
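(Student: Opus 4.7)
The plan is to mirror, almost verbatim, the argument used for Theorem \ref{Theorem1}, replacing the idempotent relation $p^{2} = p$ by the invertibility relation $s s^{-1} = 1$. The three ingredients that made the idempotent proof work carry over: (a) the factorisation identity (\ref{factorisation}), which rewrites $u \oplus u^{-1}$ as a product of three elementary matrices and one permutation-type scalar matrix, with entries depending only on $u$ and $u^{-1}$; (b) Lemma \ref{commutator}, guaranteeing that elementary matrices are invertible and that conjugation by invertibles preserves the respective set (idempotents there, invertibles here); and (c) the surjectivity of $j_{2}$, which allows us to lift entries from $\Lambda'_{\mu}$ to $\Lambda_{2,\mu}$. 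The key formal change is that the stabilisation by $0_{n}$ (which is an idempotent) must be replaced by stabilisation by $1_{n}$ (which is an invertible).

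First, I would double the size: from $s_{1}, s_{2}$ of size $n$ pass to $s_{1} \oplus 1_{n}, \; s_{2} \oplus 1_{n}$ of size $2n$. The hypothesis $j_{1,\ast}(s_{1}) = u \, j_{2,\ast}(s_{2}) \, u^{-1}$ immediately yields
\begin{equation*}
j_{1,\ast}(s_{1} \oplus 1_{n}) \;=\; U \, j_{2,\ast}(s_{2} \oplus 1_{n}) \, U^{-1}, \qquad U := u \oplus u^{-1} \in \mathbb{GL}_{2n}(\Lambda'_{\mu}),
\end{equation*}
since $U$ acts as the identity on the second $n$-block. Next, applying (\ref{factorisation}) to $U$ writes $U$ as the product of three elementary factors and the permutation matrix, all built out of $u$ and $u^{-1}$. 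Because $j_{2}$ is surjective, each $u$-entry and each $u^{-1}$-entry in the factorisation admits a preimage in $\Lambda_{2,\mu}$; substituting these preimages reproduces each elementary factor as an actual elementary matrix over $\Lambda_{2,\mu}$, and the permutation factor lifts trivially. By Lemma \ref{commutator}, such elementary matrices are invertible in $\mathbb{GL}_{2n}(\Lambda_{2,\mu})$, and therefore the product
\begin{equation*}
\tilde{U} \in \mathbb{GL}_{2n}(\Lambda_{2,\mu}), \qquad j_{2,\ast}(\tilde{U}) = U,
\end{equation*}
is a genuine invertible lift. Set
\begin{equation*}
\tilde{s}_{2} \;:=\; \tilde{U} \, (s_{2} \oplus 1_{n}) \, \tilde{U}^{-1} \in \mathbb{GL}_{2n}(\Lambda_{2,\mu}).
\end{equation*}

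It remains to assemble the double matrix $s := (s_{1} \oplus 1_{n}, \tilde{s}_{2})$ and verify it lies in $\mathbb{GL}_{2n}(\Lambda_{\mu})$. The compatibility condition $j_{1,\ast}(s_{1} \oplus 1_{n}) = j_{2,\ast}(\tilde{s}_{2})$ follows from the displayed relation above together with $j_{2,\ast}(\tilde{U}) = U$: both sides equal $(u \, j_{2,\ast}(s_{2}) \, u^{-1}) \oplus 1_{n}$. So $s \in \mathbb{M}_{2n}(\Lambda_{\mu})$, and it is invertible because its two components are, with inverse given component-wise by $(s_{1}^{-1} \oplus 1_{n}, \tilde{U}(s_{2}^{-1} \oplus 1_{n})\tilde{U}^{-1})$, which forms a double matrix by the same argument.

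The main obstacle is conceptually the same as in Theorem \ref{Theorem1}: one must check that the invertible lift $\tilde{U}$ really exists in $\Lambda_{2,\mu}$ as an invertible element (not merely as a matrix that maps to $U$ modulo $J$). This is exactly the point where the factorisation (\ref{factorisation}) is essential, since a naive lifting of $u \oplus u^{-1}$ entry-by-entry need not be invertible at all — only after rewriting $U$ as a product of elementary and permutation factors, each of which is manifestly invertible with an elementary/permutation inverse, does the invertibility of the lift follow. Everything else (the ring structure of $\Lambda$, commutativity of the induced operations with $i_{1,\ast}, i_{2,\ast}$, preservation of the filtration index $\mu$) proceeds exactly as in the idempotent case, since the factors in (\ref{factorisation}) are built from $u, u^{-1}$ and constants, hence stay in $\Lambda_{2,\mu}$ by Axiom 2 and the definition of localised homomorphism.
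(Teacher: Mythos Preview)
Your proposal is correct and follows essentially the same route as the paper's own proof, which is itself just a short sketch indicating that one should rerun the argument of Theorem \ref{Theorem1} with idempotents replaced by invertibles and the stabiliser $0_{n}$ replaced by $1_{n}$ (since inner automorphisms fix $1_{n}$). Your write-up in fact supplies more detail than the paper does, spelling out the lift $\tilde{U}$ via the factorisation (\ref{factorisation}), the assembly of the double matrix, and the componentwise inverse.
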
 
\begin{proof}    
The proof of Theorem \ref{Theorem2}  
goes along the same lines as the proof of Theorem \ref{Theorem1}.
The proof of Theorem \ref{Theorem1}  
is based on the following facts: -a) operations with double matrices respect 
Remark 19 -ii),  
lifting of the invertible element $U= \mathcal{O}_{2n}(u) := u \oplus u^{-1} \in \mathbb{GL}_{2n}(\Lambda_{2, \mu})$
($u \in \mathbb{GL}_{n}(\Lambda^{'}_{2, \mu})$) by means of the factorisation of $U$ by elementary matrices, see (\ref{factorisation}),  and  -c) the fact that any inner auto-morphism keeps invariant any zero vector sub-space.  
\par
To prove Theorem \ref{Theorem2}  
we use the same arguments -a), - b), -c) with the following changes: idempotents are replaced by invertible elements and for -c) we use the fact that the inner auto-morphisms transform the mapping $1_{n}$ into itself. 
This ends the proof of the theorem.
\end{proof}
\par
The next theorem is the analogue of Theorem \ref{theorem21.2}  
 in the $T_{1}(\mathcal{A}_{\mu})$ case.
\begin{theorem}  \label{liftinginvertibles}      
Suppose $j_{1}$ and $j_{2}$ are epi-morphisms.
\par
Let $[s_{1}] \in T_{1}^{loc}(\Lambda_{1, \mu})$ and  $[s_{2}] \in T_{1}^{loc}(\Lambda_{2, \mu})$ be such that
\begin{equation}   
j_{1 \ast}  [s_{1}]  =   j_{2 \ast}  [s_{2}] \in T_{1}^{loc}(\Lambda^{'}_{\mu}).
\end{equation}  
Then there exists $[s] \in T_{1}^{loc}(\Lambda_{\mu})$ such that
\begin{equation}   
 i_{1 \ast} [ s ] = [s_{1}] \in T_{1}^{loc}(\Lambda_{1, \mu})   \;\;\;  and \;\;\;   
 i_{2 \ast} [ s ] = [s_{2}] \in T_{1}^{loc}(\Lambda_{2, \mu}). 
\end{equation}  
\end{theorem}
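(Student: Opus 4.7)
The plan is to imitate the proof of Theorem~\ref{theorem21.2} (the $T_0^{loc}$ case), substituting Theorem~\ref{Theorem2} for Theorem~\ref{Theorem1}, and adding a correction step dictated by the $T$-completion structure of $T_1^{loc}$.

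First I choose representative invertibles $s_1\in\mathbb{GL}(\Lambda_{1,\mu})$ and $s_2\in\mathbb{GL}(\Lambda_{2,\mu})$ of the given classes. Unfolding the $T$-completion (see \S\ref{Tcompletion}), the hypothesis $j_{1*}[s_1]=j_{2*}[s_2]$ in $T_1^{loc}(\Lambda'_\mu)$ yields, after a common identity-block stabilization, elements $a_1,a_2\in\mathbb{GL}(\Lambda'_\mu)$ and a conjugator $v\in\mathbb{GL}(\Lambda'_\mu)$ with
\begin{equation*}
j_{1*}(s_1)\oplus\mathcal{O}(a_1)\;=\;v\bigl(j_{2*}(s_2)\oplus\mathcal{O}(a_2)\bigr)v^{-1}.
\end{equation*}
Using the block-permutation identity $\mathcal{O}(x)\oplus\mathcal{O}(y)\sim_c\mathcal{O}(x\oplus y)$ together with further balanced stabilizations, I symmetrize the two $\mathcal{O}$-arguments to a common element $c:=a_1\oplus a_2\in\mathbb{GL}(\Lambda'_\mu)$, arriving at a normalized relation of the form $\tilde X\oplus\mathcal{O}(c)=\hat v\bigl(\tilde Y\oplus\mathcal{O}(c)\bigr)\hat v^{-1}$, where $\tilde X,\tilde Y$ still represent the classes $[s_1],[s_2]$.

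Next I exploit the surjectivity of $j_1,j_2$ to lift $\mathcal{O}(c)$ to an invertible \emph{double matrix} $\tilde{\mathcal{O}}(c)\in\mathbb{GL}(\Lambda_\mu)$: pick arbitrary matrix lifts $B^{(k)},C^{(k)}\in\mathbb{M}(\Lambda_{k,\mu})$ of $c$ and $c^{-1}$ for $k=1,2$; the pairs $(B^{(1)},B^{(2)})$ and $(C^{(1)},C^{(2)})$ are double matrices in $\mathbb{M}(\Lambda_\mu)$; substituting them for $u,u^{-1}$ in the elementary-matrix factorization~(\ref{factorisation}) produces $\tilde{\mathcal{O}}(c)$, invertible because every elementary-matrix factor admits an elementary inverse regardless of its entries. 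Setting $s'_k:=\tilde s_k\oplus i_{k*}\tilde{\mathcal{O}}(c)$, one has $j_{1*}(s'_1)=\hat v\cdot j_{2*}(s'_2)\cdot\hat v^{-1}$; Theorem~\ref{Theorem2} applied to the triple $(s'_1,s'_2,\hat v)$ produces an invertible double matrix $\sigma\in\mathbb{GL}(\Lambda_\mu)$ with $i_{1*}\sigma=s'_1\oplus 1_n$ and $i_{2*}\sigma\sim_{lc}s'_2\oplus 1_n$.

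The desired lift is then $[s]:=[\sigma]-[\tilde{\mathcal{O}}(c)]\in T_1^{loc}(\Lambda_\mu)$, represented concretely by $\sigma\oplus\tilde{\mathcal{O}}(c)^{-1}$: for $k=1,2$,
\[
i_{k*}[s]\;=\;\bigl[\tilde s_k\oplus i_{k*}\tilde{\mathcal{O}}(c)\bigr]\;-\;\bigl[i_{k*}\tilde{\mathcal{O}}(c)\bigr]\;=\;[\tilde s_k]\;=\;[s_k].
\]

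The \emph{main obstacle} lies in the symmetrization of the opening equation: the identity $\mathcal{O}(a_1)\oplus\mathcal{O}(a_2)\sim_c\mathcal{O}(a_1\oplus a_2)$ is only a conjugation at the matrix level, and the analogous operation on the right-hand side produces $\mathcal{O}(a_2\oplus a_2)$ rather than $\mathcal{O}(c)$. Making the symmetrization rigorous therefore requires further stabilizations and careful bookkeeping of block permutations to balance both sides, so that a \emph{single} double-matrix correction $\tilde{\mathcal{O}}(c)\in\mathbb{GL}(\Lambda_\mu)$ acts effectively on each side. Were one to lift $\mathcal{O}(a_1)$ and $\mathcal{O}(a_2)$ independently, the residual classes $[i_{k*}\tilde{\mathcal{O}}(a_{k'})]$ for $k'\ne k$ would generally fail to vanish in $T_1^{loc}(\Lambda_{k,\mu})$, since — unlike the classical $K_1$ — the group $T_1^{loc}$ does not factor out the elementary-matrix (commutator) subgroup.
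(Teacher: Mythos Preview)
Your overall strategy is sound and parallels the paper's, but you took a detour that the paper avoids. The paper does \emph{not} attempt to symmetrize $\mathcal{O}(a_1)$ and $\mathcal{O}(a_2)$ to a common $\mathcal{O}(c)$, nor does it lift the $\mathcal{O}$-correction as a single double matrix over $\Lambda_\mu$. Instead it lifts $\xi_1=\mathcal{O}(a_1)$ to $\Lambda_{1,\mu}$ and $\xi_2=\mathcal{O}(a_2)$ to $\Lambda_{2,\mu}$ \emph{separately}, and arranges for each lift to land in $\mathcal{O}(\Lambda_{i,\mu})$. The trick is a doubling: first lift $\xi_i$ to some invertible $\bar{\xi}_i\in\mathbb{GL}(\Lambda_{i,\mu})$ via the elementary-matrix factorization~(\ref{factorisation}) (exactly as you do), and then set $\tilde{\xi}_i:=\bar{\xi}_i\oplus\bar{\xi}_i^{-1}$. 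By construction $\tilde{\xi}_i\in\mathcal{O}(\Lambda_{i,\mu})$, so $[s_i\oplus\tilde{\xi}_i]=[s_i]$ in $T_1^{loc}(\Lambda_{i,\mu})$ with no correction needed at the end. One then applies Theorem~\ref{Theorem2} directly to the pair $(s_1\oplus\tilde{\xi}_1,\;s_2\oplus\tilde{\xi}_2)$ and the conjugator $u$ (suitably stabilized to accommodate the extra $\bar{\xi}_i^{-1}$ blocks).

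Your worry that ``lifting $\mathcal{O}(a_1)$ and $\mathcal{O}(a_2)$ independently'' leaves uncontrolled residual classes in $T_1^{loc}(\Lambda_{k,\mu})$ is therefore misplaced: the point is not to lift them as double matrices to $\Lambda_\mu$ but to lift each one into its own $\Lambda_{i,\mu}$, where the doubling forces the lift into the $\mathcal{O}$-subgroup and hence into the zero class. This eliminates both the symmetrization obstacle you flagged and the final subtraction $[\sigma]-[\tilde{\mathcal{O}}(c)]$. Your route can probably be pushed through with enough block-permutation bookkeeping, but the paper's device is shorter and sidesteps the difficulty entirely.
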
   
\begin{proof}   
The definition of $T_{1}^{loc}(\Lambda^{'}_{\mu})$  involves an ambiguity belonging to the sub-module $\mathbf{O}(\Lambda^{'}_{\mu})$. We assume the elements 
$s_{1}$ and  $s_{1}$ are sufficiently stabilised.  Equality 
(79)  tells there exist two elements 
$\xi_{1}, \xi_{2} \in \mathbf{O}_{2n}(\Lambda^{'}_{\mu})$ such that the invertible matrices
$j_{1 \ast}(s_{1}) +\xi_{1}$, $j_{2 \ast}(s_{2}) + \xi_{2}$ are \emph{locally conjugated} by means of a matrix   $u \in \mathbf{GL}_{2n}(\Lambda^{'})$
\begin{equation}  
j_{1 \ast}(s_{1}) + \xi_{1} = u \; (j_{2 \ast}(s_{2}) + \xi_{2}) \;  u^{-1}.
\end{equation}    
The problem of finding the element $s$ will be split in two separate problems
\begin{enumerate}
\item
find $\tilde{\xi}_{1} \in \mathcal{O}(\Lambda_{1, \mu})$, resp. $\tilde{\xi}_{2} \in  \mathcal{O}(\Lambda_{2, \mu})$, lifts of the elements $\xi_{1}$, resp.  $\xi_{2}$,
\item
apply Theorem \ref{liftinginvertibles}   
with $s_{1}$, resp. $s_{2}$,  replaced by $s_{1} + \tilde{\xi}_{1}$, resp. $s_{2} + \tilde{\xi}_{2}$, and the invertible element $u$.
\end{enumerate}  
1. The first lift is obtained in two steps: -i)  we find invertible lifts 
$\tilde{\xi}_{1} \in \Lambda_{1, \mu}$, resp. $\tilde{\xi}_{2} \in \Lambda_{2, \mu}$,
of the elements $\xi_{1}$, resp. $\xi_{2}$ , and we verify that  -ii) such lifts belong to $\mathbf{O}_{2n}(\Lambda_{i, \mu})$,  $i = 1, 2.$ 
\par
As the lifts of the elements $\xi_{i}$ belong to  $\mathbf{O}_{2n}(\Lambda_{i, \mu})$,  the
lifted elements will not change the corresponding $T$-completion classes. 
\subsection{Lifting of $\mathbb{O}(\Lambda^{'}_{\mu})$}  \label{liftingO}  
We illustrate the procedure on the element $\xi_{1}$; \;for $\xi_{2}$ we use the same procedure.
\par
The element $\xi_{1}$ has the form 
\begin{equation}  
\xi_{1} =   
\begin{pmatrix}
\alpha_{1}  &  0 \\
0  &   \alpha_{1}^{-1}
\end{pmatrix}
=
\alpha_{1}  \oplus \alpha_{1}^{-1} 
\in \mathbf{O}_{2n}(\Lambda^{'}_{\mu}).
\end{equation}  
\par
We use the factorisation (\ref{factorisation})   
to decompose of $\xi_{1}$
\begin{equation}  
\xi_{1} = 
\begin{pmatrix}
\alpha_{1} & 0 \\
0 & \alpha^{-1}_{1}
\end{pmatrix}
=
\begin{pmatrix}
1 & \alpha_{1} \\
0 & 1
\end{pmatrix}
\begin{pmatrix}
1 & 0 \\
- \alpha^{-1}_{1} & 1
\end{pmatrix}
\begin{pmatrix}
1 & \alpha_{1} \\
0 & 1
\end{pmatrix}
\begin{pmatrix}
0 & -1 \\
1 & 0
\end{pmatrix}
.
\end{equation}    
 
The homo-morphism $j_{1}$ being an epi-morphism, there exists
$\tilde{\alpha}_{1}$, resp. $\tilde{\beta}_{1} \in \Lambda_{1, \mu}$ such that
$j_{1}(\tilde{\alpha}_{1}) = \alpha_{1}$, resp.  $j_{1}(\tilde{\beta}_{1}) =  \alpha_{1}^{-1}.$ 
The lifted element is
\begin{equation}  \label{lifting}
\bar{\xi}_{1} 
=
\begin{pmatrix}
1 & \tilde{\alpha}_{1} \\
0 & 1
\end{pmatrix}
\begin{pmatrix}
1 & 0 \\
- \tilde{\beta}_{1} & 1
\end{pmatrix}
\begin{pmatrix}
1 & \tilde{\alpha}_{1} \\
0 & 1
\end{pmatrix}
\begin{pmatrix}
0 & -1 \\
1 & 0
\end{pmatrix}
,
\end{equation}    
\begin{equation*}  
j_{1}(\bar{\xi}_{1}) 
=
\begin{pmatrix}
\alpha_{1} & 0 \\
0 & \alpha_{1}^{-1}
\end{pmatrix}
= \xi_{1}.
\end{equation*}    
\par
We use formula (\ref{factorisation})   
and property 
(\ref{inverse}) 
of elementary matrices to find the inverse of $\bar{\xi}_{1}$
\begin{equation}  
\bar{\xi}_{1}^{-1} 
=
\begin{pmatrix}
0 & 1 \\
-1 & 0
\end{pmatrix}
\begin{pmatrix}
1 & - \tilde{\alpha}_{1} \\
0 & 1
\end{pmatrix}
\begin{pmatrix}
1 & 0 \\
 \tilde{\beta}_{1} & 1
\end{pmatrix}
\begin{pmatrix}
1 & - \tilde{\alpha}_{1} \\
0 & 1
\end{pmatrix}
.
\end{equation}    
We have
\begin{equation*}  
j_{1}(\bar{\xi}_{1}^{-1}) 
=
\begin{pmatrix}
\alpha_{1} & 0 \\
0 & \alpha_{1}^{-1}
\end{pmatrix}
^{-1}
\sim_{cl} \xi_{1}^{-1}
\end{equation*}    
because $j_{1}$ is a unital ring homo-morphism.
\par
The lifted element is
$$
\tilde{\xi}_{1} = \tilde{\xi}_{1} \oplus  \tilde{\xi}_{1}^{-1} \in 
\mathbb{M}_{4n}(\Lambda_{1, \mu}).
$$
\subsection{Back to the proof of Theorem \ref{liftinginvertibles}}  
 The elements $s_{1} + \tilde{\xi}_{1} \in \mathbb{GL}_{4n}(\Lambda_{1, \mu})$, 
                           $s_{2} + \tilde{\xi}_{2} \in \mathbb{GL}_{4n}(\Lambda_{2, \mu})$ and the invertible element $u$ satisfy the relation
\begin{equation}  
j_{1, \ast}  (s_{1} + \tilde{\xi}_{1}) = u \; (s_{2} + \tilde{\xi}_{2}) \; u^{-1}.
\end{equation}   
Theorem 42 follows from Theorem 43.   
\end{proof}  
\section{ $K_{1}(\mathcal{A})$  vs.  $T_{1}(\mathcal{A})$ }   \label{section21.13}
\par
The following identities are well known and used as building blocks of $K$-theory, see  \cite{Whitehead},  \cite{Milnor_Introduction},  \cite{Karoubi_1987}, \cite{Blackadar}, \cite{Rosenberg}, \cite{Rordam_Lansen_Lautsen}, \cite{Weibel}.
\par
We summarise some basic facts from the classical algebraic $K_{1}$-theory and compare them with $T_{1}$.
\begin{theorem}  
-i) Whitehead group $K_{1}(\mathcal{A})$    
is 
\begin{equation}   
K_{1}(\mathcal{A}) := \injlim_{n \in \mathbb{N}}  \mathbb{GL}_{n}(\mathcal{A})  /  [ \mathbb{GL}_{n}(\mathcal{A})  ,\mathbb{GL}_{n}(\mathcal{A}) ]
\end{equation}     
The group structure in $K_{1}(\mathcal{A})$ is given by matrix multiplication and direct sum addition
\begin{equation}  
[ A ] + [B] :=
[ \;A.B\;].
\end{equation}  
$T_{1}(\mathcal{A})$ is the set of Jordan canonical forms of matrices over $\mathcal{A}$ modulo
$\mathcal{O}(\mathcal{A})$. The sum in $T_{1}(\mathcal{A})$ is the direct sum.
\par                                              
-ii) Any commutator is stably isomorphic to a product of elements of the form 
$\mathcal{O}_{2n}(\mathcal{A})$. More specifically, for any $ A , B \in \mathbb{GL} (\mathcal{A}_{\mu})$
\begin{equation}    \ref{commutator} 
\begin{pmatrix}
ABA^{-1} B^{-1} & 0\\
0  &  1
\end{pmatrix}
=
\begin{pmatrix}
A & 0\\
0  &  A^{-1}
\end{pmatrix}
\begin{pmatrix}
B& 0\\
0  &  B^{-1}
\end{pmatrix}
\begin{pmatrix}
(BA)^{-1} & 0\\
0  &  BA
\end{pmatrix}.
\end{equation}     
-iii) If   $A \in \mathbb{GL}_{n}(\mathcal{A}_{\mu})$, then  (see 
 (\ref{factorisation}))  
\begin{equation}  
\mathcal{O}_{2n} (A) :=
\begin{pmatrix}
A & 0 \\
0  & A^{-1}
\end{pmatrix}
=
\begin{pmatrix}
1 & A \\
0  & 1
\end{pmatrix}
\begin{pmatrix}
1 & 0 \\
-A^{-1}  & 1
\end{pmatrix}
\begin{pmatrix}
1 & A \\
0  & 1
\end{pmatrix}
\begin{pmatrix}
0 & -1 \\
1  &  0
\end{pmatrix}
\end{equation}   
-iv) Any elementary matrix is a commutator  
\begin{equation}  
E_{ij} (A) = [ E_{ik}(A), E_{kj}(1)],   \hspace{0.2cm}  for \; any\; i, j, k \; distinct \; indices.
\end{equation}  
-iv) For any $A, B  \in \mathbb{GL}_{n}( \mathcal{A}_{\mu})$, $A+B$ is stably equivalent to $AB$ and $BA$ modulo (multiplicatively) elements of the form $\mathcal{O}_{2n}(\mathcal{A})$
\begin{equation}   
\begin{pmatrix}
A  &  0\\
0  &  B
\end{pmatrix}
=
\begin{pmatrix}
AB & 0\\
0  &  1
\end{pmatrix}  
\begin{pmatrix}
B^{-1} & 0\\
0  &  B
\end{pmatrix} 
=
\begin{pmatrix}
B^{-1} & 0\\
0  &  B
\end{pmatrix} 
\begin{pmatrix}
BA & 0\\
0  &  1
\end{pmatrix}  
. 
\end{equation}  
\par
-v) For any $ A, \; B \in \mathbb{GL}_{n} ( \mathcal{A}_{\mu}) $ one has the identity
\begin{equation}    
\begin{pmatrix}
ABA^{-1}  &  0 \\
       0         &  1
\end{pmatrix}
=
\begin{pmatrix}
A   &   0  \\
0   &   A^{-1}
\end{pmatrix}
\begin{pmatrix}
B  &   0 \\
0  &    1
\end{pmatrix}
\begin{pmatrix}
A^{-1}  &  0  \\
    0      &   A
\end{pmatrix} = 
\begin{pmatrix}
A & 0\\
0  &  A^{-1}
\end{pmatrix}
\begin{pmatrix}
B& 0\\
0  &  1
\end{pmatrix}
\begin{pmatrix}
A & 0\\
0  &  A^{-1}
\end{pmatrix}^{-1}
. 
\end{equation}    
\end{theorem}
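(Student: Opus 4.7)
The plan is to verify the theorem by treating each numbered item separately, since the statement is a summary of five block-matrix identities (together with the definitional recollection in (i)) rather than a single deep assertion. All five identities live in $\mathbb{M}_{2n}(\mathcal{A}_{\mu})$ (or $\mathbb{M}_{3n}$ for the elementary-matrix identity), and each will follow from direct multiplication of $2{\times}2$ block matrices, combined with the definitions of $K_1$, $T_1$, $\mathcal{O}_{2n}$ and the factorisation (\ref{factorisation}) already established.

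For part (i), I would simply recall definitions: $K_1(\mathcal{A})$ is by construction the injective limit of $\mathbb{GL}_n/[\mathbb{GL}_n,\mathbb{GL}_n]$ with the Whitehead relation $[A]+[B]=[AB]$, which is the classical consequence of the identity in (iv') below (shown later in the theorem) forcing direct sum and product to coincide modulo commutators. The statement about $T_1(\mathcal{A})$ being the space of abstract Jordan forms modulo $\mathcal{O}(\mathcal{A})$ restates the definition given in the section introducing $T_1(\mathcal{A}_\mu)$ and $T_1^{loc}(\mathcal{A})$ via $T$-completion.

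For part (ii), I would compute the right-hand side as a single $2{\times}2$ block-diagonal product: the top-left block is $A\cdot B\cdot(BA)^{-1}=AB(BA)^{-1}$, and multiplying by $(ABA^{-1}B^{-1})^{-1}$ gives $1$; the bottom-right block is $A^{-1}B^{-1}\cdot BA = 1$. Part (iii) is literally the factorisation identity (\ref{factorisation}) already proved as a proposition, so nothing new is needed. Part (iv) is verified by expanding $E_{ik}(A) E_{kj}(1) E_{ik}(A)^{-1} E_{kj}(1)^{-1} = E_{ik}(A)E_{kj}(1)E_{ik}(-A)E_{kj}(-1)$; using the basic multiplication rules for elementary matrices with distinct indices $i,j,k$, the cross terms produce exactly $E_{ij}(A)$. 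For part (iv'), compute $\mathrm{diag}(AB,1)\cdot\mathrm{diag}(B^{-1},B) = \mathrm{diag}(ABB^{-1},B) = \mathrm{diag}(A,B)$, and similarly $\mathrm{diag}(B^{-1},B)\cdot\mathrm{diag}(BA,1) = \mathrm{diag}(A,B)$. For part (v), expand the triple product: the $(1,1)$-entry is $A\cdot B\cdot A^{-1}$, the $(2,2)$-entry is $A^{-1}\cdot 1\cdot A = 1$, and all off-diagonal entries vanish.

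The only non-computational point — which I would flag as the one requiring care rather than a real obstacle — is checking that all factorisations respect the filtration: in each identity every matrix that appears on the right-hand side has entries drawn from $\mathcal{A}_\mu$ provided the originals $A,B$ lie in $\mathbb{GL}_n(\mathcal{A}_\mu)$. This is automatic because the factorisations use only the entries of $A$, $B$, $A^{-1}$, $B^{-1}$ together with $0$ and $1$, and the filtration axioms in Definition~\ref{localisedrings} ensure these remain in $\mathcal{A}_\mu$. In particular, the elementary-matrix factors and the scalar matrix $\begin{pmatrix}0&-1\\1&0\end{pmatrix}$ lie in $\mathbb{GL}(\mathcal{A}_\mu)$, so the identities hold in the localised setting and therefore descend to $T_1(\mathcal{A}_\mu)$ and $T_1^{loc}(\mathcal{A})$. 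No step requires more than block-matrix algebra, so the proof reduces to five short verifications placed in series.
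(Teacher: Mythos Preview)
The paper does not actually supply a proof of this theorem: it is stated as a summary of ``well known'' building-block identities, with the prefatory sentence referring the reader to Whitehead, Milnor, Karoubi, Blackadar, Rosenberg, R{\o}rdam--Larsen--Laustsen and Weibel, and no \texttt{proof} environment follows. Your plan of verifying each block-matrix identity by direct multiplication is exactly what those references do, and your observation that every factor on the right-hand sides uses only $A$, $B$, $A^{-1}$, $B^{-1}$, $0$, $1$ (hence respects the filtration $\mathcal{A}_\mu$) is the one localisation-specific point worth recording. One small stylistic remark on part~(ii): rather than saying ``multiplying by $(ABA^{-1}B^{-1})^{-1}$ gives $1$'', it is cleaner to write the top-left entry directly as $A\cdot B\cdot (BA)^{-1} = AB\,A^{-1}B^{-1}$ since $(BA)^{-1}=A^{-1}B^{-1}$; this is what you intend, but the phrasing as written momentarily suggests an extra step.
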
   

\begin{theorem} (see \cite{Whitehead}, \cite{Bass}, \cite{Swan}, \cite{Milnor_Introduction}, \cite{Karoubi_1987}, \cite{Rosenberg} )   
\par
-i)  $[\mathbb{GL}(\mathcal{A}),  \mathbb{GL}_{n}(\mathcal{A}) ]  =   \mathbb{E}_{n}(\mathcal{A})$
\par
-ii) $[A B A^{-1}B^{-1}]  = 0  \in K_{1} ( \mathcal{A})$   
\par
-iii.1) $[A ]+ [B]  = [ \;A B\; ] = [ \;BA\; ]  = [ B ]+ [ A] \in  K_{1} (\mathcal{A}) $. 
\par
-iii.2) $[A ]+ [B]  = [B] + [A] \in  T_{1}^{loc} (\mathcal{A})$.
Therefore $T_{1}^{loc}(\mathcal{A})$ is an Abelian group.
\par
-iv) $[\mathcal{O}_{2n} (\mathcal{A})]  =  [1_{n} ]   =  0 \; in \; K_{1}(\mathcal{A})$ and
$T_{1}^{loc}(\mathcal{A})$
\par
-v) $ - [A] = [A^{-1}]   \;in\;  K_{1}(\mathcal{A})\; and \;T_{1}(\mathcal{A})^{loc}$.
\par
-vi) 
$[A B A^{-1}]  = [ B ]   \in K_{1} ( \mathcal{A})$.   
\end{theorem}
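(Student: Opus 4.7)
The plan is to treat the six assertions in parallel, separating the $K_{1}(\mathcal{A})$ claims (which reduce to classical Whitehead-lemma calculations) from the $T_{1}^{loc}(\mathcal{A})$ claims (which reduce to the algebraic properties of the $T$-completion established in \S\ref{Tcompletion}). All of the required matrix identities are already packaged in the preceding Theorem~45; our task is essentially to read them through the two quotient operations.

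For part -i), I would follow the standard Whitehead argument: one inclusion is part -iv) of Theorem~45, which exhibits each generator $E_{ij}(A)$ as a multiplicative commutator $[E_{ik}(A), E_{kj}(1)]$, so $\mathbb{E}_{n}(\mathcal{A}) \subset [\mathbb{GL}(\mathcal{A}), \mathbb{GL}_{n}(\mathcal{A})]$. The reverse inclusion is obtained by the block identity in part -ii) of Theorem~45, which stably writes any commutator $ABA^{-1}B^{-1}$ as a product of three $\mathcal{O}_{2n}$-factors, each of which by part -iii) of Theorem~45 factors as a product of elementary matrices. Part -ii) is then immediate from the definition $K_{1}(\mathcal{A}) := \mathbb{GL}(\mathcal{A})/[\mathbb{GL}(\mathcal{A}), \mathbb{GL}(\mathcal{A})]$. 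Part -vi) follows from part -v) of Theorem~45, which stably expresses $ABA^{-1}$ as an $\mathcal{O}_{2n}$-conjugate of $B$; since $\mathcal{O}_{2n}(A)$ is elementary by part -iii), such a conjugation is trivial modulo commutators.

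For part -iii.1) one uses part -iv) of Theorem~45 in $K_{1}(\mathcal{A})$: the identity $A \oplus B = (AB \oplus 1) \cdot (B^{-1} \oplus B) = (B^{-1} \oplus B) \cdot (BA \oplus 1)$ shows, after projecting to $K_{1}(\mathcal{A})$ and using that $B^{-1}\oplus B = \mathcal{O}_{2n}(B^{-1})$ is a product of elementary matrices (hence trivial in $K_{1}$), the equalities $[A]+[B]=[AB]=[BA]=[B]+[A]$. For part -iii.2), commutativity of $T_{1}^{loc}(\mathcal{A})$ is precisely the content of the already-proved Proposition on commutativity of $\mathbb{GL}(\mathcal{A}_{\mu})/\sim_{lc}$, via the swap conjugation
\[
\begin{pmatrix} A & 0 \\ 0 & B \end{pmatrix}
=
\begin{pmatrix} 0 & -1 \\ 1 & 0 \end{pmatrix}
\begin{pmatrix} B & 0 \\ 0 & A \end{pmatrix}
\begin{pmatrix} 0 & -1 \\ 1 & 0 \end{pmatrix}^{-1},
\]
which shows $A+B \sim_{lc} B+A$ and in particular yields the equality after passing to the $T$-completion and projective limit.

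For parts -iv) and -v) in $K_{1}(\mathcal{A})$: the factorisation displayed in part -iii) of Theorem~45 exhibits $\mathcal{O}_{2n}(A)$ as a product of elementary matrices, hence trivial in $K_{1}$; and $[A^{-1}] = -[A]$ follows from $[A]+[A^{-1}] = [A \cdot A^{-1}] = [1] = 0$ (using part -iii.1)). In $T_{1}^{loc}(\mathcal{A})$ the same two statements are built into the construction: by definition of the $T$-completion, every element of the form $\mathcal{O}(u) = u + I(u)$ is equivalent to $0$ (take $u_{0}=u$, $u_{1}=0$ in the defining relation $u+\xi_{0}=v+\xi_{1}$), so $[\mathcal{O}_{2n}(A)] = 0$; and the Proposition on $T$-completion established $-[u] = [I(u)] = [u^{-1}]$ directly. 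The main obstacle, if any, is purely bookkeeping: one must be careful that each identity in Theorem~45 is read in the correct ring ($\mathcal{A}$ versus $\mathcal{A}_{\mu}$) and that the invertibles used in the conjugations lie in the appropriate $\mathbb{GL}_{n}(\mathcal{A}_{\mu})$ so as to be legal under $\sim_{lc}$; since the factorisation formula (\ref{factorisation}) only uses $u$, $u^{-1}$ and scalar entries, the localisation axioms make this automatic.
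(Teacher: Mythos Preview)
Your proof is correct and follows essentially the same route as the paper's own argument: both reduce everything to the block identities collected in Theorem~45 (the commutator-as-$\mathcal{O}_{2n}$ factorisation, the elementary factorisation of $\mathcal{O}_{2n}$, and the swap conjugation) together with the defining properties of the $T$-completion. You spell out a few more details (e.g.\ both inclusions in -i), the explicit use of the $A\oplus B = (AB\oplus 1)(B^{-1}\oplus B)$ identity in -iii.1), and a conjugation argument for -vi) in place of the paper's commutator argument), but the structure and the key inputs are the same.
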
   

\begin{proof}  
-i) Relation (\ref{commutator}) 
says that any commutator is a product of matrices of type 
$\mathcal{O}_{n}(\mathcal{A})$. Formula (\ref{factorisation}) 
says that any matrix of type  $\mathcal{O}_{n}(\mathcal{A})$ is a product of elementary matrices and a scalar matrix
 \ref{factorisation}. This proves -i).
\par
-ii) Follows from the definition of $K_{1}(\mathcal{A})$.
\par
-iii.1) and -iii.2).  See definitions.
\par
-iv)  For any invertible element $x_{0}$  one has 
\begin{equation}
  (x_{0} +  (  \xi + \xi^{-1} )    ) \sim_{
  \mathcal{O}
  }  (x_{0}  + (\xi + \xi^{-1})) +  (\xi + \xi^{-1})
\end{equation}
along with the fact that  $T_{1}$   is a group.
\par
-v)  Relation for  $K_{1}$  follows from $A . A^{-1} = 1$, which is the zero 
element in  $K_{1}$. 
\par
For $T_{1}$ the relation follows from  -iv).
\par
-vi) $(A B A^{-1}) . B^{-1}$ is a commutator; it is the zero element in $K_{1}$. On the other side $[B^{-1}] \;=\; [B] \in K_{1}$.
 \end{proof}    
 \par
The next remark explains the specific parts of the construction of $T_{1}^{loc}$ which make our construction different from the classical one.
\begin{remark} 
\par
-i) In our construction the factorisation of the elements of $\mathbb{GL}(\mathcal{A})$ through the commutator sub-group  (or the sub-group generated by elementary matrices) is avoided because  \emph{the number of products needed to generate these sub-groups might be un-bounded}. Having in mind the algebra of integral operators, or pseudo-differential operators,  one realises that any product increases the size of the support;  an un-bounded number of products does not allow a support control.
\par
For this reason, in the whole chapter we don't use more than \emph{three} products of elements in the algebra. Products are replaced by sums and direct sums.  The increase of the size of matrices replaces in our construction the need to perform multiple products. In our definition of $T_{1}^{loc}(\mathcal{A})$, where products could not be avoided, the corresponding increase in the size of the supports  is absorbed by the projective limit. 
\par
-ii) Our construction uses the elements $\mathcal{O}(\mathcal{A})$ \emph{additively} and not \emph{multiplicatively}. In the classical construction of $K_{1}(\mathcal{A})$,  the elements of $\mathcal{O}(\mathcal{A})$ are used multiplicatively. Unfortunately, to generate the commutator sub-group it is necessary to perform an \emph{un-bounded number of products};  in our construction an un-bounded number of products is not allowed. 
\par
-iii) The construction of $T^{loc}_{1}(\mathcal{A})$ uses the factorisation of $\mathbb{GL}(\mathcal{A})$ through the smaller (than the commutator) sub-group of inner auto-morphisms. The class of an invertible element $u$ modulo inner auto-morphisms, which is nothing but the abstract Jordan form $J(u)$ of 
$u$,  contains more information than the class of the invertible element modulo the commutator sub-group.
Both, in the classical $K_{1}(\mathcal{A})$ and $T_{1}(\mathcal{A})$ the elements of 
$\mathcal{O}_{n}(\mathcal{A})$ represent the zero element. 
\par
If $\mathcal{A}$ were the algebra of complex matrices and $u \in \mathbb{M}(\mathbb{C})$, then
$J(u)$ could be identified with the Jordan canonical form of the matrix $u$. The Jordan canonical form of the matrix $u \oplus u^{-1}$ is precisely $J(u) \cup J(u^{-1})$ modulo permutations of the Jordan blocks. 
It is clear that $ u \oplus u^{-1} $ could never be conjugated to the the identity element unless $u = 1_{n}$. In general,  $u \oplus u^{-1}$ could not be conjugated to $1_{n} \oplus 1_{n}$ unless $u \sim_{s} 1_{n}$. 
 \par
 For the computation of the groups $T_{i}^{loc}(\mathbb{C})$, where $\mathbb{C}$ is the algebra of complex numbers endowed with the trivial filtration, see \cite{Teleman_arXiv_V} and \S 22.  
\par
The elements $\mathcal{O}(\mathcal{A})$ represent the zero element in $T_{1}$. 
\par
 For the definition of  $T_{1}^{loc}(\mathcal{A})$ we find it natural to consider the 
quotient space of $\mathbb{GL}(\mathcal{A})$ modulo the equivalence relation
$\sim_{sl}$. This factorisation uses the \emph{additive sub-group} 
$\mathcal{O}(\mathcal{A})$. This factorisation decrees that the Jordan canonical forms of the elements $u$ and  $u^{-1}$ are opposite one to each other in $T_{1}^{loc}$.
The \emph{additive} group generated by
elements $\mathcal{O}(\mathcal{A})$ is contained in the commutator sub-group; this property insures the fact that there exists a natural epi-morphism from  $T_{1}^{loc}(\mathcal{A})$ to  $K_{1}(\mathcal{A})$.
\par
-iv) The factorisation through the sub-group $\mathcal{O}(\mathcal{A})$  does not appear to kill much information. A partial argument in support of this is the fact that for any $u, v \in 
\mathbb{GL}_{n}(\mathcal{A})$, 
$(u \oplus u^{-1}) \sim_{l} (v \oplus v^{-1})$ if and only if $(u \oplus 1_{n}) \sim_{l} (v \oplus 1_{n})$. 
\par
-v) Additionally, the projective limit (Alexander-Spanier type construction, made possible by the filtration $\mathcal{A}_{\mu}$ of the algebra), makes the algebraic $T^{loc}_{i}$-theory richer than the classical $K_{i}$-theory, $i = 0,  1$.  
\end{remark}  

\begin{theorem}    
-i) There is a canonical epi-morphism
\begin{gather}   
\Pi:  T^{loc}_{1} (\mathcal{A})  \longrightarrow   K_{1} (\mathcal{A}) \\
Ker \; \Pi =   [\; \mathbf{GL}(\mathcal{A}),  \mathbf{GL}(\mathcal{A}) \;] \; /  \;Inner(\mathcal{A}).
\end{gather}   
\end{theorem}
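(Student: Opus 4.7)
The plan is to construct $\Pi$ at the level of $\mathbf{GL}(\mathcal{A})$ and verify that it descends through each of the three equivalence relations defining $T_1^{loc}(\mathcal{A})$. Explicitly, I would start with the canonical projection $q : \mathbf{GL}(\mathcal{A}) \to K_1(\mathcal{A}) = \mathbf{GL}(\mathcal{A})/[\mathbf{GL}(\mathcal{A}), \mathbf{GL}(\mathcal{A})]$ and show that $q$ is constant on each $\sim_s$-, $\sim_c$-, and $\sim_{\mathbf{O}(\mathcal{A})}$-class. Stabilisation is tautological in $K_1$. Local conjugation is absorbed by the identity $[ABA^{-1}] = [B] \in K_1(\mathcal{A})$ recorded in the preceding comparison theorem. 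The substantive step is $\sim_{\mathbf{O}(\mathcal{A})}$: by the factorisation (\ref{factorisation}), each generator $\mathcal{O}(u) = u \oplus u^{-1}$ is a product of elementary matrices and a permutation, and each elementary matrix is a commutator by (\ref{commutator}), so $[\mathcal{O}(u)] = 0$ in $K_1(\mathcal{A})$; hence the whole additive subgroup $\mathbf{O}(\mathcal{A})$ maps to zero, and any relation $u + \xi_0 = v + \xi_1$ forces $[u] = [v] \in K_1(\mathcal{A})$. Functoriality of $K_1$ under the inclusions $\mathcal{A}_{\mu'} \hookrightarrow \mathcal{A}_\mu \hookrightarrow \mathcal{A}$ makes the construction compatible with the projective limit, producing $\Pi$.

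Surjectivity is immediate: each class $[u] \in K_1(\mathcal{A})$ has a matrix representative $u \in \mathbf{GL}(\mathcal{A})$, and this same $u$ defines a class in $T_1^{loc}(\mathcal{A})$ which $\Pi$ sends to $[u]$. For the kernel, an element $[u] \in T_1^{loc}(\mathcal{A})$ lies in $\ker \Pi$ precisely when some representative belongs to $[\mathbf{GL}(\mathcal{A}), \mathbf{GL}(\mathcal{A})]$. It then remains to check that on this subgroup the restrictions of $\sim_s$ and $\sim_{\mathbf{O}(\mathcal{A})}$ collapse to inner conjugation. Stabilisation does so automatically after passing to the direct limit of $\mathbf{GL}_n$. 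For $\sim_{\mathbf{O}(\mathcal{A})}$, the key observation is that $\mathbf{O}(\mathcal{A})$ itself lies in $[\mathbf{GL}, \mathbf{GL}]$ (again by (\ref{factorisation}) and (\ref{commutator})), so the identification $u + \xi_0 = v + \xi_1$ between two commutators can, after sufficient stabilisation, be realised by conjugation with an explicit block-diagonal matrix built from $u, v, \xi_0, \xi_1$. Combining these reductions yields precisely $\ker \Pi = [\mathbf{GL}(\mathcal{A}), \mathbf{GL}(\mathcal{A})] / \mathrm{Inner}(\mathcal{A})$.

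The main obstacle, in my view, is this last reduction: translating the additive relation $\sim_{\mathbf{O}(\mathcal{A})}$ on commutators into an inner conjugation. The tension is between the additive equivalence used in the definition of $T_1^{loc}$ and the multiplicative nature of the commutator subgroup; the technical challenge is to produce explicit conjugators and to maintain the support control emphasised in Section \ref{21Introduction}, so that the whole argument descends through the projective limit. The natural strategy is to factor each $\mathcal{O}$-summand via (\ref{factorisation}) into elementary blocks and absorb those blocks into the commutator part of the other side by a block-diagonal conjugation whose entries use only $u, v$ and the given $\mathcal{O}$-generators, keeping the number of multiplications bounded in the sense required by the filtration axioms.
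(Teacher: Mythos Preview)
Your approach is essentially the one the paper has in mind. The paper does not supply a separate proof for this theorem; it is stated as an immediate consequence of the preceding comparison material, in particular Remark~46~-iii), which records exactly your key point: the additive subgroup generated by $\mathcal{O}(\mathcal{A})$ lies inside $[\mathbf{GL}(\mathcal{A}),\mathbf{GL}(\mathcal{A})]$ (via the factorisation (\ref{factorisation}) and the commutator identity (\ref{commutator})), and this is what guarantees the existence of the epimorphism $\Pi$. Your treatment of $\sim_s$ and $\sim_c$ via Theorem~45~-vi) is likewise what the paper intends.

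One point worth flagging, which neither the paper nor your sketch fully addresses, is the surjectivity step under the projective limit: a representative $u\in\mathbf{GL}(\mathcal{A})$ need not lie in every $\mathbf{GL}(\mathcal{A}_\mu)$, so ``this same $u$ defines a class in $T_1^{loc}(\mathcal{A})$'' requires either a trivial filtration hypothesis or an argument that a compatible family in the projective system can always be produced. The paper is silent on this, so your proposal is at least as complete as the paper's own treatment; but if you want a rigorous statement you should either restrict to trivially localised $\mathcal{A}$ or indicate how to manufacture the projective-limit representative.
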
  
\section{Connecting homo-morphism   
$\partial:  T_{1}^{loc} (\Lambda^{'}) \longrightarrow T_{0}^{loc}(\Lambda)\otimes \mathbb{Z}[\frac{1}{2}] 
$}                                                                     
In this section we assume that the diagram \ref{MayerVietoris} satisfies 
Hypotheses 1, 2. 3.
In this section we define the \emph{connecting homomorphism}
$\partial: T_{1}^{loc} \longrightarrow T_{0}^{loc} \otimes \mathbb{Z}[\frac{1}{2}]$. 
\par
The reader will notice that  the construction of $\partial$ involves from the very beginning idempotents.
\begin{definition}    \label{partial} 
Let $[u] \in T_{1} (\Lambda^{'}_{\mu})$. Recall that the elements of 
$T_{1}(\Lambda^{'}_{\mu})$ are equivalence classes of invertible matrices modulo 
$\sim_{\mathcal{O}_{\mathcal{A}_{\mu}}}$; we may assume that 
$u \in \mathbb{GL}_{n}(\Lambda^{'}_{\mu})$. 
\par
Define the \emph{connecting homomophism}
$\partial:  T_{1}(\Lambda^{'}_{\mu}) \longrightarrow   T_{0}(\mathcal{A}_{\mu}) \otimes \mathbb{Z}[\frac{1}{2}]$
\begin{equation}  
\partial[u] =  [p(1_{n}, 1_{n}, \tilde{U}(u)] - [\Lambda^{n}(\mathcal{A}_{\mu})],
\end{equation}  
where $\tilde{U}_{1}(u)$ is obtained through the decomposition of $u \oplus u^{-1}$ as in 
(\ref{factorisation}), and lifted in $\Lambda_{1, \mu}$, as in (\ref{lifting}).        
\end{definition} 
\par  
\begin{proposition}   
Suppose the homo-morphisms $j_{1}, j_{2}$ are epi-morphisms.
\par
Then the connecting homomorphism is well defined.
\end{proposition}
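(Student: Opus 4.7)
The strategy is to verify well-definedness in four stages, each corresponding to one of the ambiguities in the definition: (a) that $p(1_n,1_n,\tilde{U}(u))$ is genuinely an idempotent over $\Lambda_\mu$; (b) independence from the representative of $[u]$ modulo $\sim_s$, $\sim_c$ and $\sim_{\mathcal{O}}$; (c) independence from the chosen lifting $\tilde U(u)$; (d) additivity of $\partial$.

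For (a) I would invoke Theorem \ref{Theorem1} with $p_1=p_2=1_n$ and the conjugating element $u\in\mathbb{GL}_n(\Lambda'_\mu)$. The identity $j_{1*}(1_n)=1_n=u\,j_{2*}(1_n)\,u^{-1}$ holds trivially, so the theorem produces a double-matrix idempotent once we exhibit an invertible lifting of $U=u\oplus u^{-1}$ in $\Lambda_{2,\mu}$. Because $j_2$ is an epi-morphism, I choose lifts $\tilde\alpha$ of $u$ and $\tilde\beta$ of $u^{-1}$ and use the factorisation (\ref{factorisation}) to produce $\tilde U(u)\in\mathbb{GL}_{2n}(\Lambda_{2,\mu})$, exactly as in \S\,\ref{liftingO}. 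The inverse of $\tilde U(u)$ is constructed by the dual factorisation, so all four block entries remain in $\Lambda_{2,\mu}$. Thus the class $[p(1_n,1_n,\tilde U(u))]-[\Lambda^n(\mathcal{A}_\mu)]\in T_0^{loc}(\Lambda_\mu)\otimes\mathbb{Z}[\tfrac12]$ is at least defined.

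For (b) I would handle each of the three equivalence relations separately. Stabilisation $u\mapsto u\oplus 1_m$ stabilises $\tilde U(u)$, hence stabilises both terms on the right of (\ref{partial}) by the same amount, leaving the difference unchanged. Local conjugation $u'=v u v^{-1}$, with $v\in\mathbb{GL}_n(\Lambda'_\mu)$ lifting (via surjectivity) to invertibles $\tilde v_i\in\mathbb{GL}_n(\Lambda_{i,\mu})$, $i=1,2$, transports $\tilde U(u)$ to a matrix conjugate to $\tilde U(u')$ in $\mathbb{GL}_{2n}(\Lambda_{2,\mu})$, and the resulting double-matrix idempotents are locally conjugate in $\Lambda_\mu$, hence equal in $T_0^{loc}$. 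For $\sim_{\mathcal{O}}$, i.e.\ replacing $u$ by $u\oplus(w\oplus w^{-1})$, the recipe applied to the extra summand gives an idempotent which is locally conjugate to $1_{n'}\oplus 1_{n'}\oplus 0_{2n'}$ in $\mathbb{M}_{4n'}(\Lambda_\mu)$, and this contributes the same stabilisation to $[\Lambda^{n+2n'}]$ as to $[p]$, so the difference is preserved.

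For (c), if $\tilde U,\tilde U'$ are two liftings of $u\oplus u^{-1}$ obtained from different choices of $\tilde\alpha,\tilde\beta$ in (\ref{lifting}), then $\tilde U'\tilde U^{-1}$ projects to $1_{2n}$ under $j_2$, and the two idempotents $\tilde U(1_n\oplus 0_n)\tilde U^{-1}$, $\tilde U'(1_n\oplus 0_n)\tilde U'^{-1}$ are conjugated in $\mathbb{GL}_{2n}(\Lambda_{2,\mu})$. This equates the corresponding double idempotents in $T_0^{loc}(\Lambda_\mu)$. The factor $\mathbb{Z}[\tfrac12]$ is then forced when passing between two factorisation patterns of $u\oplus u^{-1}$ that differ by the permutation $\bigl(\begin{smallmatrix}0&-1\\1&0\end{smallmatrix}\bigr)$: the two resulting classes differ by a $2$-torsion correction, which is exactly what the tensor product with $\mathbb{Z}[\tfrac12]$ kills. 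I expect this coherence check to be the main obstacle, because one must track precisely how the sign-ambiguity in the last factor of (\ref{factorisation}) propagates through the lifted idempotent.

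Finally (d), additivity follows from the observation that for $[u]+[v]=[u\oplus v]$ one has $(u\oplus v)\oplus(u\oplus v)^{-1}\sim_{ls}(u\oplus u^{-1})\oplus(v\oplus v^{-1})$ after conjugation by a block-permutation matrix; applying the lifting scheme to each summand separately yields $\tilde U(u)\oplus\tilde U(v)$, and the associated double idempotent is the direct sum of those built from $u$ and $v$, while $[\Lambda^{n+m}]=[\Lambda^n]+[\Lambda^m]$, giving $\partial[u\oplus v]=\partial[u]+\partial[v]$ in $T_0^{loc}(\Lambda_\mu)\otimes\mathbb{Z}[\tfrac12]$. Passing to the projective limit in $\mu$ delivers the statement for the local groups.
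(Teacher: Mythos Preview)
Your proposal is essentially correct and in fact considerably more systematic than the paper's own proof. The paper treats only your sub-case~(b) for the relation $\sim_{\mathcal O}$: it observes that if $u_1+\xi_1=u_0+\xi_0$ with $\xi_i\in\mathbb O(\Lambda'_\mu)$, then by \S\ref{liftingO} each $\xi_i$ lifts to an invertible $\tilde\xi_i\in\Lambda_{i,\mu}$, and the idempotents $\tilde\xi_i\,1_n\,\tilde\xi_i^{-1}$ form a double idempotent equal to the trivial one, so $\partial$ of an $\mathcal O$-element is zero. Everything else (stabilisation, conjugation, independence of the lifting, additivity) is left implicit. Your scheme (a)--(d) covers all of these explicitly and by the right mechanisms: Theorem~\ref{Theorem1} for~(a), block-permutation conjugations for stabilisation and additivity, and the observation that two lifts $\tilde U,\tilde U'$ differ by an element projecting to $1_{2n}$, hence $(1_{2n},\tilde U'\tilde U^{-1})$ is an invertible double matrix conjugating the two candidate idempotents inside $\Lambda_\mu$.

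One caution concerns your step~(c). Your argument that different lifts give conjugate double idempotents is correct and suffices; but your proposed explanation for the appearance of $\mathbb Z[\tfrac12]$---a $2$-torsion discrepancy coming from the sign in the last factor of~(\ref{factorisation})---is speculative and not the mechanism the paper has in mind. The paper does not justify the $\mathbb Z[\tfrac12]$ tensor at this point at all; it enters only later, in the proof of exactness of the six-term sequence (Theorem~\ref{exactsequence}, part~iii.4), where lifting arguments in $T_0$ require inverting~$2$. For the present proposition you do not need $\mathbb Z[\tfrac12]$ to show $\partial$ is well defined on classes; you may simply drop that paragraph, or note that the tensor is harmless here and becomes essential only for exactness.
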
   
We have to show that $\partial$ is compatible with the equivalence relation 
$\sim_{\mathbb{O}_{\mathcal{A}_{\mu}}}$.  For,  suppose that 
$u_{1} + \xi_{1} = u_{0} + \xi_{0}$,
where $u_{0}, u_{1} \in \mathbb{GL}_{n}(\mathcal{A}_{\mu})$
and  $\xi_{0}, \xi_{1} \in \mathbb{O}_{n}(\mathcal{A}_{\mu})$. 
We have to prove that 
$\partial \mathbb{O}(u_{1}) = \partial \mathbb{O}(u_{2}) = 
trivial \; idempotent$.
\par
We proved in \S \ref{liftingO} that any invertible element $\xi$ belonging to 
$\mathbb{O}_{n} (\Lambda^{'}_{\mu})$ lifts to an invertible elements 
$\tilde{\xi}_{1}$, resp. $\tilde{\xi}_{2}$, belonging to 
$\Lambda_{\mu,1}$, resp.  $\Lambda_{\mu,2}$. In our specific case, the element
$1_{n}(\Lambda_{1, \mu})$, resp.  $1_{n}(\Lambda_{2, \mu})$, is conjugated through the
lifted elements $\tilde{\xi}_{i}$, $i = 1, 2$.
\par
The conjugation inside $\Lambda_{\mu, i}$ preserves the $T_{1}(\Lambda_{\mu, i})$ classes. 
 Recall that the idempotents of 
$\Lambda_{\mu}$ consist of pairs of idempotents $(p_{1}, p_{2}) \in
\Lambda_{\mu, 1} \oplus \Lambda_{\mu, 2}$ such that
$i_{1} p_{1} = i_{2} p_{2}$. The idempotents 
$$
p_{1} = \tilde{\xi}_{1} 1_{n} \tilde{\xi}_{1}^{-1},  \;\;\;
p_{2} = \tilde{\xi}_{2} 1_{n} \tilde{\xi}_{2}^{-1}
$$
satisfy this condition. The proposition is proven.
\section{Six terms exact sequence.}    
\begin{theorem} \ref{exactsequence}   
The following sequences are exact
\par
{\bf -i)}   
\begin{gather}   
T_{0}^{loc} (\Lambda)   
\overset{ (i_{1},i_{2}) }{\longrightarrow} 
T_{0}^{loc} (\Lambda_{1})  \oplus T_{0}^{loc} (\Lambda_{2}) 
\overset{ j_{1 \ast} - j_{2 \ast} }{\longrightarrow}   
T_{0}^{loc}(\Lambda^{'})
\end{gather}  
\par
{\bf -ii)}
\begin{gather}    
T_{1}^{loc} (\Lambda)  
\overset{(i_{1 \ast}, i_{2 \ast})}{\longrightarrow}
T_{1}^{loc} (\Lambda_{1})   \oplus T_{1}^{loc} (\Lambda_{2}) 
\overset{ j_{1 \ast} - j_{2 \ast} }{\longrightarrow} 
T_{1}^{loc} (\Lambda^{'})  
\end{gather}   
\par
{\bf -iii)}
\begin{gather}  
T_{1}^{loc} (\Lambda_{1})  \oplus T_{1}^{loc} (\Lambda_{2})  
\overset{ j_{1 \ast} - j_{2 \ast} }{\longrightarrow} 
T_{1}^{loc} (\Lambda')   \overset{\partial}{\longrightarrow} \\
T_{0}^{loc} (\Lambda) \otimes \mathbb{Z}[\frac{1}{2}]
  \overset{ (i_{1 \ast}, i_{2 \ast}) }{\longrightarrow} 
 (T_{0}^{loc} (\Lambda_{1})  \otimes \mathbb{Z}[\frac{1}{2}]) \oplus 
 (T_{0}^{loc} (\Lambda_{2})  \otimes \mathbb{Z}[\frac{1}{2}]). 
\end{gather} 
Therefore, the following six terms sequence is exact
\begin{gather}    
T_{1}^{loc} (\Lambda)  
\overset{ (i_{1 \ast}, i_{2 \ast}) }{\longrightarrow}
T_{1}^{loc} (\Lambda_{1}) \oplus T_{1}^{loc} (\Lambda_{2}) 
\overset{ j_{1 \ast} - j_{2 \ast} }{\longrightarrow} 
T_{1}^{loc} (\Lambda^{'})  
\overset {\partial}{\longrightarrow} \\
T_{0}^{loc} (\Lambda)  \otimes \mathbb{Z}[\frac{1}{2}] 
\overset{(i_{1 \ast}, i_{2 \ast})}{\longrightarrow}
T_{0}^{loc} (\Lambda_{1})  \otimes \mathbb{Z}[\frac{1}{2}]  \oplus 
T_{0}^{loc} (\Lambda_{2})  \otimes \mathbb{Z}[\frac{1}{2}] 
\overset{ j_{1 \ast} - j_{2 \ast}}{\longrightarrow} 
T_{0}^{loc} (\Lambda^{'}) \otimes \mathbb{Z}[\frac{1}{2}] 
\end{gather}   
\end{theorem}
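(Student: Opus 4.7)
The plan is to establish each of the three exactness statements separately, leveraging the constructive lifting results (Theorem \ref{theorem21.2} and Theorem \ref{liftinginvertibles}) already proven, and then to assemble them into the six term sequence. Throughout, I would work first at the level of a fixed filtration index $\mu$ and then pass to the projective limit, using the fact that $\mathbb{Z}[\tfrac12]$ is flat so that tensoring commutes with $\projlim$ up to the usual $\mathrm{lim}^{1}$ considerations.

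For parts \textbf{-i)} and \textbf{-ii)}, the inclusion $\mathrm{Im}(i_{1\ast},i_{2\ast}) \subset \mathrm{Ker}(j_{1\ast}-j_{2\ast})$ is immediate from the commutativity of the local Mayer--Vietoris diagram: the equality $j_{1}\circ i_{1} = j_{2}\circ i_{2}$ descends to the induced homomorphisms. The reverse inclusion in part -i) is exactly the content of Theorem \ref{theorem21.2}, which produces a double matrix idempotent $[p] = [p_{01}]-[p_{02}] \in T_{0}^{loc}(\Lambda_{\mu})$ mapping to any prescribed kernel element. The reverse inclusion in part -ii) is supplied by Theorem \ref{liftinginvertibles}, whose core mechanism is the factorisation (\ref{factorisation}) of $u \oplus u^{-1}$ into elementary matrices (together with one scalar matrix), combined with the lifting of $\mathbb{O}(\Lambda'_{\mu})$ elements described in \S \ref{liftingO}.

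For part \textbf{-iii)}, the argument splits into exactness at $T_{1}^{loc}(\Lambda')$ and exactness at $T_{0}^{loc}(\Lambda)\otimes\mathbb{Z}[\tfrac12]$. The vanishing of $\partial \circ (j_{1\ast}-j_{2\ast})$ follows because if $u = j_{i\ast}(\tilde{u})$ already lifts to an invertible $\tilde{u} \in \mathbb{GL}(\Lambda_{i,\mu})$, then one can take $\tilde{U}(u) = \tilde{u} \oplus \tilde{u}^{-1}$ as the lift used in Definition \ref{partial}; the resulting idempotent $p(1_{n},1_{n},\tilde{U}(u))$ is locally conjugate to $\Lambda^{n}(\mathcal{A}_{\mu})$ inside $\Lambda_{i,\mu}$, so $\partial[u] = 0$. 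For the converse inclusion, if $\partial[u] = 0$ then Lemma 40 produces, after stabilisation, an invertible over $\Lambda_{\mu}$ conjugating the two idempotent components of $p(1_{n},1_{n},\tilde{U}(u))$ back to the identity; tracing this conjugation through the factorisation (\ref{factorisation}) recovers an invertible lift of $u$ modulo the $\mathbb{O}$-relation, which is exactly the statement that $[u] \in \mathrm{Im}(j_{1\ast}-j_{2\ast})$. Exactness at $T_{0}^{loc}(\Lambda)\otimes\mathbb{Z}[\tfrac12]$ is proved analogously: $(i_{1\ast},i_{2\ast})\partial[u] = ([1_{n}\oplus 0_{n}]-[\Lambda^{n}], [\tilde{p}_{2}]-[\Lambda^{n}])$, and Lemma 36 -iii) together with $\tilde{p}_{2} \sim_{lc} p_{2} \oplus 0_{n} \sim_{lc} 1_{n} \oplus 0_{n}$ shows both components vanish; conversely, a class in the kernel of $(i_{1\ast},i_{2\ast})$ is realised, via Theorem \ref{Theorem1}, as a double idempotent of the form $p(1_{n},1_{n},\tilde{U})$ for some invertible $\tilde{U} \in \mathbb{GL}(\Lambda^{'}_{\mu})$, which is by construction the image under $\partial$ of $[\tilde{U}]$.

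The main obstacle I expect is the correct bookkeeping of the factor $\mathbb{Z}[\tfrac12]$. This factor is forced by the doubling inherent in the construction of $\partial$: the passage from an invertible $u$ to the idempotent $p(1_{n},1_{n},\tilde{U}(u))$ proceeds through $u \oplus u^{-1}$, which contributes a factor $2$ one cannot eliminate without inverting it. Carefully justifying that this factor appears only on $T_{0}^{loc}$ and not on $T_{1}^{loc}$ (reflecting the asymmetric role played by the $\mathbb{O}$-completion in the definitions), and checking that the well-definedness of $\partial$ as stated in Proposition 48 is compatible with the projective limit over $\mu$, is where the technical attention is concentrated; everything else reduces to patient diagram chasing powered by the lifting theorems already in hand.
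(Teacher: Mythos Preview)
Your proposal is correct and follows essentially the same route as the paper's own proof: the easy inclusions come from commutativity of the diagram, the reverse inclusions in parts \textbf{-i)} and \textbf{-ii)} are reduced to Theorem~\ref{theorem21.2} and Theorem~\ref{liftinginvertibles} respectively (the paper sometimes re-does the argument inline rather than citing the theorem, but the mechanism is identical, including the use of \S\ref{liftingO} and the key Lemma that $p\oplus(1-p)\sim_{ls}1_n$), and part \textbf{-iii)} is handled by the same four inclusions you outline, with the factor $\mathbb{Z}[\tfrac12]$ entering exactly where you indicate. Your remark about the projective limit and flatness of $\mathbb{Z}[\tfrac12]$ is more explicit than the paper, which works at each $\mu$ and passes to the limit tacitly.
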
 
\begin{proof}   
\par

{\bf -i)} 
\par
{\bf i.1.)} \; $Im (i_{1 \ast}, i_{2 \ast}) \subset Ker (j_{1 \ast} - j_{2 \ast})$.
\par
Easy to verify.
\par
{\bf i.2.)} \; $Ker (j_{1 \ast} - j_{1 \ast}) \subset  Im  (i_{1 \ast}, i_{2 \ast})$
\par
We are going to verify it.
\begin{lemma}  
Let $p \in \mathbf{I}demp_{n} (\mathcal{A}_{\mu})$.
\par
Then  
\begin{equation}  
p \oplus (1-p) \sim_{ls} 1_{n}.
\end{equation}  
\end{lemma}  
\begin{proof}   
 The proof is based on the following identity
\begin{gather}   
\begin{pmatrix}
p & 0 \\
0 & 1-p
\end{pmatrix}
=
\begin{pmatrix}
p & 1-p \\
1-p & p 
\end{pmatrix}
\begin{pmatrix}
1_{n} & 0 \\
0 & 0
\end{pmatrix}
\begin{pmatrix}
p & 1-p \\
1-p & p 
\end{pmatrix}
\end{gather}   
along with the observation that

\begin{gather}   
\begin{pmatrix}
p & 1-p \\
1-p & p
\end{pmatrix}
^{2}
=
\begin{pmatrix}
p^{2} + (1-p)^{2} & p(1-p) + (1-p)p\\
(1-p) p + p(1-p) & p^{2} + (1-p)^{2} 
\end{pmatrix}
=
\begin{pmatrix}
1_{n} & 0 \\
0 & 1_{n}
\end{pmatrix}
\end{gather}    
which shows that
\begin{gather}   
\begin{pmatrix}
p & 1-p \\
1-p & p
\end{pmatrix}
^{-1} =
\begin{pmatrix}
p & 1-p \\
1-p & p
\end{pmatrix},
\end{gather}    
which entitles us to say that the RHS of (107) is an inner auto-morphism.
\end{proof}  

For any idempotent $p$ of size $n$ we convine to write $\bar{p} := 1_{n} - p$.  
\par
Let $([p_{1}] - [p_{2}] \; , \;  [q_{1}] - [q_{2}]) \in T_{0}^{loc} (\Lambda_{1, \mu}) \oplus T_{0}^{loc} (\Lambda_{2, \mu}) $
be such that 
$$
0 = j_{1, \ast}([p_{1}] - [p_{2}]) - j_{2, \ast} ([q_{1}] - [q_{2}]), 
$$
where $p_{1}, p_{2}, q_{1}, q_{2}$ are idempotents. The pair of $T$-theory classes may be re-written
$$
0 = j_{1, \ast}([p_{1} + \bar{p}_{2}] - [p_{2}+ \bar{p}_{2}]) - j_{2, \ast} ([q_{1}+ \bar{q}_{2}] - [q_{2}+ \bar{q}_{2}]) 
$$
or
$$
0 = j_{1, \ast}([p_{1} + \bar{p}_{2}] - [1_{r}]) - j_{2, \ast} ([q_{1}+ \bar{q}_{2}] - [1_{s}]). 
$$
By adding all sides a trivial idempotent of sufficiently large size, we may assume that $r = s$ is large. This relation may be re-written
$$
0 = j_{1, \ast}([p_{1} + \bar{p}_{2}] - j_{2, \ast} ([q_{1}+ \bar{q}_{2}] ). 
$$
This means that there exists an idempotent $\xi \in \mathbf{I}demp_{N}(\Lambda^{'}_{\mu})$  such that the idempotents
$$
( j_{1, \ast}([p_{1} + \bar{p}_{2}] + \xi) - ( j_{2, \ast} ([q_{1}+ \bar{q}_{2}] + \xi) 
$$
are isomorphic. We add further the idempotent $\bar{\xi} := 1_{N} - \xi$ to get isomorphic idempotents

$$
( j_{1, \ast}([p_{1} + \bar{p}_{2} ] + \xi + \bar{\xi}),   \;\;  ( j_{2, \ast} ([q_{1}+ \bar{q}_{2}] + \xi + \bar{\xi}) \in 
\mathbb{I}demp_{\bar{N}} (\Lambda^{'}_{\mu})
$$
( $\bar{N}$ being the size of these idempotents)
or
$$
( j_{1, \ast}([p_{1} + \bar{p}_{2} ] + 1_{N}), \;\;  ( j_{2, \ast} ([q_{1}+ \bar{q}_{2}] + 1_{N}). 
$$
This means there exists $u \in \mathbf{GL}_{\bar{N}}(\Lambda^{'})$ which conjugates these two idempotents.
\par
Theorem \ref{theorem21.2}   
says that there exits the idempotent $p \in \mathbb{I}demp_{2 \bar{N}}(\Lambda_{\mu})$ such that
$$
i_{1 \ast} p = (p_{1} + \bar{p}_{2} + 1_{N}) \oplus 1_{\bar{N}}
$$
and 
$$
i_{2 \ast} p = U \; ( (q_{1} + \bar{q}_{2} + 1_{N}) \oplus 1_{\bar{N}} ) \; U^{-1}
$$
where 
$$
U \in \mathbb{GL}_{2 \bar{N}}(\Lambda^{'}_{\mu}).
$$
This means the image of the $T$-theory class of $p - 1_{\bar{N} } \in \Lambda_{\mu}$ through the pair of homomorphisms $(i_{1, \ast}, i_{2, \ast})$ is
$$
(i_{1, \ast}, i_{2, \ast}) [p - 1_{\bar{N}}] =   
$$
$$
= (\;  (p_{1} + \bar{p}_{2} + 1_{N}) \oplus 1_{\bar{N}} - 1_{\bar{N}}, \;  
U \; ( (q_{1} + \bar{q}_{2} + 1_{N}) \oplus 1_{\bar{N}} ) \; U^{-1} - 1_{\bar{N}}
\; ) = 
$$

$$
= (\;  (p_{1} + \bar{p}_{2} + 1_{N}) \oplus 1_{\bar{N}} - 1_{\bar{N}},  \;
U \; ( (q_{1} + \bar{q}_{2} + 1_{N}) \oplus 1_{\bar{N}}  - 1_{\bar{N}}  ) \;  \; U^{-1} 
\;  = 
$$
$$
= ([p_{1} - p_{2}], [q_{1} - q_{2}],
$$
which completes the proof of the part {\bf i)}.
\par

{\bf -ii)}  
\par
{\bf -ii.1.)}   $Im (i_{1}, i_{2})_{\ast} \subset Ker (j_{1,\ast} - j_{1,\ast})$. 
\par
If $u = (u_{1}, u_{2}) \in \mathbb{GL}(\Lambda_{\mu})$  belongs to $Im (i_{1}, i_{2})_{\ast}$, then according to the definition of $\Lambda_{\mu}$,
$u$ may be chosen so that $j_{1} (u_{1}) = j_{2} (u_{2})$ and hence  $[u] \in Ker (j_{1, \ast} - j_{2, \ast})$. 
\par
{\bf -ii.2.)}  $Ker (j_{1,\ast} - j_{2,\ast}) \subset Im (i_{1}, i_{2})_{\ast}$. 
\par
Let $u =  (u_{1}, u_{2}) \in Ker (j_{1,\ast} - j_{1,\ast})$.
This means there exist two elements $\xi_{1}, \xi_{2} \in \mathbb{O}(\Lambda^{'}_{\mu})$ so that
$j_{1}(u_{1}) + \xi_{1} =  j_{2}(u_{2}) + \xi_{2}$. The elements $j_{1}(u_{1})$, $j_{2}(u_{2})$  lift
 to  $u_{1} \in \Lambda_{1, \mu}$, resp. $u_{2} \in \Lambda_{2, \mu}$. On the other side, see  \S \ref{liftingO},   
 the elements 
 $\xi_{1}$, resp. $\xi_{2}$, lift to elements $\tilde{\xi}_{1} \in \mathbb{O}(\Lambda_{1, \mu})$, resp. $\tilde{\xi}_{2} \in  \mathbb{O}(\Lambda_{2, \mu})$.  This means
 \begin{equation*}
 u_{1} + \tilde{\xi}_{1} \in \Lambda_{1, \mu}  \;\; and\;\;  u_{1} + \tilde{\xi}_{2} \in \Lambda_{2, \mu}
 \end{equation*}
 are so that 
 $$ 
j_{1}( u_{1} + \tilde{\xi}_{1}) =  j_{2}( u_{2} + \tilde{\xi}_{2}). 
 $$
 Therefore,
 $$[ ( u_{1} + \tilde{\xi}_{1}), \; ( u_{2} + \tilde{\xi}_{2})] = [ ( u_{1}, \; u_{2} )] \in T_{1}(\Lambda_{1\mu}) 
 \oplus T_{1}(\Lambda_{2\mu}),$$
 which proves the statement. 
\par
{\bf -iii)} 
\par
{\bf -iii.1.)} $Im ( j_{1 \ast} - j_{2 \ast})   \subset Ker \partial$. 
\par
It is sufficient to prove  $\partial \circ  j_{1 \ast} =0$. Consider  $u \in \Lambda_{1, \mu}$. 
We have to compute  $\partial \circ j_{1}(u)$. The result is independent of the lift of $(j_{1}(u)) \otimes (j_{1}(u))^{-1}$.  
We may use the lift $u \otimes u^{-1}$. We get then
\begin{equation} 
\partial \circ j_{1}(u) = (u \otimes u^{-1}) (1_{n} \oplus 1_{n}) (u \otimes u^{-1})^{-1} =  1_{n} \oplus 1_{n}.
\end{equation}  
 {\bf -iii.2.)} $Ker \partial \subset Im ( j_{1 \ast} - j_{2 \ast}) $. 
 Let $u \in \Lambda^{'}_{\mu}$ be such that 
 $$\partial u =  [( 1_{n}, 1_{n}, U(u) )] - [\Lambda ( \mathcal{A}_{\mu}^{n})] = 0.$$
 We have to prove that there exist  $v_{1} \in \Lambda_{1, \mu}$, $v_{2} \in \Lambda_{2, \mu}$ such that  $][u = [j_{1, \ast} v_{1} - j_{2, \ast} v_{2}]$.
 \par
 The hypothesis says
 $$
 [( 1_{n}, 1_{n}, U (u) )] = [\Lambda_{\mu}^{n}].
 $$
 By adding an idempotent $q$  and its complementary to both sides of 
 this equation, we get
 $$
  (1_{m+n}, 1_{m+n}, U (u \oplus 1_{m}) ) \sim_{c} \Lambda_{\mu}^{m+n}.
 $$
 We may transfer the conjugation onto the second term of this equation to get
$$
 \partial  (u) =  \Lambda_{\mu}^{m+n},
$$
which proves the statement.
\par 
{\bf -iii.3.)} 
$Im \; \partial  \subset Ker \; (i_{1},  i_{2})_{\ast}$.  
\par
Let $u \in T_{1}(\Lambda^{'}_{\mu})$  and  $p = \partial u \in T_{0}(\Lambda_{\mu})$.  
We have to prove that  
$(i_{1}, i_{2})_{\ast}(p) = 0$. 
\par
We have
\begin{equation}
[i_{1} \circ \partial u] = [i_{1} \;(1_{n} \oplus 0_{n},   \;u (1_{n} \oplus 0_{n}) {u}^{-1}) 
- \Lambda^{n}  (\mathcal{A}_{\mu})]
= 
\end{equation}
\begin{equation*}
[1_{n} \oplus 0_{n}] -  [\Lambda^{n}  (\mathcal{A}_{\mu})] = 0  
\end{equation*}
On the other side
\begin{equation}
[i_{2} \circ \partial u] = [i_{2} \; (1_{n} \oplus 0_{n},   \; u (1_{n} \oplus 0_{n}) u^{-1})] 
- [\Lambda^{n}  (\mathcal{A}_{\mu}]
= 
\end{equation}
\begin{equation*}
 [u (1_{n} \oplus 0_{n}) u^{-1})]  -  [\Lambda^{n}  (\mathcal{A}_{\mu})] = 0.
\end{equation*}
\par
{\bf -iii.4.)} 
$ Ker (i_{1},  i_{2})_{\ast} \subset Im (\partial \otimes \mathbb{Z}[\frac{1}{2}])$. 
Let $p \in T_{0}(\Lambda_{\mu})$  and  $(i_{1}, i_{2})_{\ast}(p) = 0$. 
We have to prove that there exists $u \in T_{1}(\Lambda^{'}_{\mu})$ such that
$p = \partial u \in T_{0}(\Lambda_{\mu})$.
\par
That is, we look for an invertible matrix $u \in \mathbb{M}_{n}(\Lambda^{'}_{\mu})$ such that 
\begin{gather*}
0 = (i_{1}, i_{2})_{\ast}([p]) =  (i_{1}, i_{2})_{\ast} 
[(1_{n} \oplus 0_{n},  u (1_{n} \oplus 0_{n})  u^{-1})] - [\Lambda_{\mu}^{n}] = \\
( [1_{n} \oplus 0_{n}],  [u (1_{n} \oplus 0_{n})  u^{-1}]) - [\Lambda_{\mu}^{n}].
\end{gather*}
We stabilise the idempotents $1_{n} \oplus 0_{n}$, $\Lambda_{\mu}^{n}$. 
The conjugation transforms trivial idempotents in trivial idempotents.
The stabilised idempotents $1_{n+m} \oplus 0_{n+m}$, $\Lambda_{\mu}^{n+m}$ still satisfy the condition
\begin{equation}
p_{1, \ast} (1_{n+m} \oplus 0_{n+m}) = 0,  \;\; p_{2, \ast} (\Lambda_{\mu})^{n+m} = 0.
\end{equation}
We choose an isomorphism $\tilde{u}$ between the trivial idempotents
$1_{n+m} \oplus 0_{n+m}$, $\;\Lambda_{\mu}^{n+m}$.
We reduce these idempotents, modulo the ideal $J$; we obtain idempotents in 
$\Lambda^{'}(\mathcal{A}_{\mu})$.  Let $u$ be the restriction of $\tilde{u}$ modulo $J$.
The element $u$ is the element we are looking for.
\par
If we started with the element $p \otimes 1/2^{m} \in T_{0}(\Lambda_{\mu}) \otimes \mathbb{Z}[\frac{1}{2}]$, the corresponding element would be $u\otimes 1/2^{m}$.
\end{proof}
\section{Relative $T$- groups: $T_{i}(\mathcal{A}_{\mu}, J)$.}  
\begin{definition}  
Let $\mathcal{A}$ be a localised ring and $\mathcal{J}$ be a localised bi-lateral ideal in $\mathcal{A}$.  
Here we make reference to the Mayer - Vietoris Diagram \ref{MayerVietoris};  
we take
\begin{equation}
\Lambda_{\mu} = \mathcal{J}_{\mu}, \;\;\; \Lambda_{1, \mu} = \Lambda_{2, \mu} = \mathcal{A}_{\mu} 
\end{equation}
\begin{equation}
\Lambda_{\mu} = 
\{ 
(\lambda_{1}, \lambda_{2})\;|\; \lambda_{1} \in \Lambda_{1, \mu}, \;
\lambda_{2} \in \Lambda_{2, \mu}, \; \lambda_{1} - \lambda_{2} \in \mathcal{J}_{\mu}.
\}
\end{equation}
and
\begin{equation}
\Lambda^{'}_{\mu} = \mathcal{A}_{\mu} / \mathcal{J}_{\mu} 
\end{equation}
The above structure will be called $(\mathcal{A}, \mathcal{J})$   \emph{localised ideal}.
\par
Define, for $i = 1, 2$
\begin{equation}
j_{i}:  \Lambda_{\mu} \longrightarrow \mathcal{A} /  \mathcal{J}, \;\;\;
j_{i}\; (\lambda_{1}, \lambda_{2}) := \lambda_{i}\; mod. \mathcal{J}_{\mu}.
\end{equation}
\end{definition}   
\begin{definition}  \label{definition21.17} 
\begin{gather}   
T_{0}^{loc}(\mathcal{A}, \mathcal{J}) := Ker \; (\;  
T_{0}^{loc} (\Lambda_{\mu})
\overset{j_{2, \ast}}\longrightarrow 
T_{0}^{loc}(\mathcal{A}_{\mu} / \mathcal{J}_{\mu} )\;) \\
T_{1}^{loc}(\mathcal{A}, \mathcal{J}) := Ker \;( \; 
T_{1}^{loc} (\Lambda_{\mu}) 
\overset{j_{2}, \ast}\longrightarrow 
T_{1}^{loc}(\mathcal{A}_{\mu} / \mathcal{J}_{\mu} )\;) 
\end{gather}    
\end{definition}   
\begin{theorem} \label{theorem21.9}   
\par
-i) One has the exact sequence
\begin{equation}   
0 \longrightarrow
\mathcal{J}_{\mu}
 \overset{\iota}{\longrightarrow}
\mathcal{A}_{\mu}
 \overset{\pi}{\longrightarrow}
\mathcal{A}_{\mu} / \mathcal{J}_{\mu}
\longrightarrow
0,
\end{equation}   
where $\iota$ is the inclusion and $\pi$  is the canonical projection.
\par
-ii) The exact sequence \ref{exactsequence} 
becomes
\begin{gather}   \label{secondexactsequence}   
T_{1}^{loc}(\mathcal{A}, \mathcal{J}) \overset{i_{\ast}}{\longrightarrow}
T_{1}^{loc}(\mathcal{A}) \overset{\pi_{\ast}}{\longrightarrow}
T_{1}^{loc}(\mathcal{A}/\mathcal{J}) \overset{\partial}{\longrightarrow} \\
T_{0}^{loc}(\mathcal{A}, \mathcal{J}) 
\otimes \mathbb{Z}[\frac{1}{2}]
\overset{i_{\ast}}{\longrightarrow}
T_{0}^{loc}(\mathcal{A}) 
\otimes \mathbb{Z}[\frac{1}{2}]
\overset{\pi_{\ast}}{\longrightarrow}
T_{0}^{loc}(\mathcal{A}/\mathcal{J})
\otimes \mathbb{Z}[\frac{1}{2}].
\end{gather}
\end{theorem}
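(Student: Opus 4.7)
Part -i is essentially tautological: the filtration $\{\mathcal{A}_\mu\}$ restricts to $\mathcal{J}_\mu := \mathcal{J}\cap \mathcal{A}_\mu$ and descends to $\mathcal{A}_\mu/\mathcal{J}_\mu$, both inherited structures satisfy Axioms 1--3 of Definition \ref{localisedrings}, the inclusion $\iota$ and projection $\pi$ preserve filtrations, and exactness of the short sequence of rings is standard.

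For part -ii, the plan is to specialize Theorem \ref{exactsequence} to the localized Mayer--Vietoris diagram of Section \ref{MayerVietoris}, taken with $\Lambda_{1,\mu}=\Lambda_{2,\mu}=\mathcal{A}_\mu$, $\Lambda_\mu = \{(\lambda_1,\lambda_2)\in \mathcal{A}_\mu\oplus\mathcal{A}_\mu : \lambda_1-\lambda_2\in\mathcal{J}_\mu\}$, and $\Lambda'_\mu=\mathcal{A}_\mu/\mathcal{J}_\mu$. Hypothesis 2 holds by construction and Hypothesis 3 holds because $\pi$ is surjective. The crucial extra input is the diagonal embedding $s:\mathcal{A}\to \Lambda$, $s(a)=(a,a)$, a morphism of localized rings satisfying $i_k\circ s=\mathrm{id}_\mathcal{A}$ and $j_k\circ s=\pi$ for $k=1,2$. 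Consequently $s_*$ splits each $i_{k,*}$, yielding a decomposition $T_i^{loc}(\Lambda)\cong s_*\bigl(T_i^{loc}(\mathcal{A})\bigr)\oplus \ker(i_{1,*})$; moreover, because $j_1=j_2$ identically on $\Lambda$, the Mayer--Vietoris difference map $j_{1,*}-j_{2,*}$ reduces to $(x,y)\mapsto \pi_*(x-y)$. Under these identifications the relative group $T_i^{loc}(\mathcal{A},\mathcal{J}):=\ker j_{2,*}$ of Definition \ref{definition21.17} contains $\ker i_{1,*}$, maps via $i_{1,*}$ onto $\ker\pi_*\subseteq T_i^{loc}(\mathcal{A})$, and the connecting homomorphism $\partial$ of Definition \ref{partial} takes values in $T_0^{loc}(\mathcal{A},\mathcal{J})\otimes \mathbb{Z}[\tfrac{1}{2}]$ since the double idempotent $p(1_n,1_n,\tilde U(u))$ has both components equal to the trivial idempotent modulo $\mathcal{J}$.

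With these identifications in hand, the relative six--term sequence is extracted from Theorem \ref{exactsequence} by a routine diagram chase. Exactness at $T_1^{loc}(\mathcal{A})$ is immediate: any $x\in \ker\pi_*$ lifts to $s_*(x)\in \ker j_{2,*}=T_1^{loc}(\mathcal{A},\mathcal{J})$. Exactness at $T_1^{loc}(\mathcal{A}/\mathcal{J})$ is the equality $\mathrm{im}(j_{1,*}-j_{2,*})=\mathrm{im}\,\pi_*$ combined with the definition of $\partial$. The main obstacle is exactness at $T_0^{loc}(\mathcal{A},\mathcal{J})\otimes \mathbb{Z}[\tfrac{1}{2}]$: given an element $y$ annihilated by $i_*$, to produce a pre-image under $\partial$ one must modify its representative by an element of $s_*(T_0^{loc}(\mathcal{A}))$ so that both $i_{1,*}$ and $i_{2,*}$ vanish on $y$ simultaneously (the hypothesis only forces a single $i_{k,*}$ to vanish, while membership in $\mathrm{im}\,\partial=\ker(i_{1,*},i_{2,*})$ requires both). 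The $\mathbb{Z}[\tfrac{1}{2}]$ factor supplied by Theorem \ref{exactsequence} is precisely what enables this symmetrization, after possibly dividing by two, and thereby completes the verification.
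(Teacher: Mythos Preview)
Your proposal is correct and in fact considerably more detailed than the paper's own argument, which consists solely of the sentence ``We leave the check to the reader.'' Your use of the diagonal splitting $s:\mathcal{A}\to\Lambda$, $a\mapsto(a,a)$, to pass from the Mayer--Vietoris six-term sequence of Theorem~\ref{exactsequence} to the relative form is exactly the standard Milnor-style reduction the paper has in mind, and your handling of the exactness at $T_0^{loc}(\mathcal{A},\mathcal{J})\otimes\mathbb{Z}[\tfrac12]$ correctly identifies where the $\mathbb{Z}[\tfrac12]$ coefficient enters.
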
  
\begin{proof}
We leave the check to the reader.
\end{proof}
\section{Connecting homo-morphism - second form}  
\label{connectinghomomorphismsecondform}  
In this section we define Mayer - Vietoris diagramms associated with elliptic operators between two different Hilbert spaces and we exibit the corrresponding connecting homomorphism.  
\par
To motivate the next definition, let 
$
D: H_{0} \longrightarrow H_{1}
$
be an (integral or pseudo-differential) elliptic operator.  
Let $\sigma(D) := D \; mod. \; compact\;operators$ be its \emph{symbol}.
\par
We consider the ring
\begin{gather}
\Lambda_{1} = \Lambda_{2} := Hom_{vect} (H_{0} \oplus H_{1}, \; H_{0} \oplus H_{1}) \\
J := Compact \; operators \;  on \; H_{0} \oplus H_{1}
\end{gather}
\begin{definition} \label{definition21.17}   
Consider the following structure. 
\par
Let $\Lambda_{1} = \Lambda_{2}$ be a ring. Let $J \subset
\Lambda_{i}$, $i= 1, 2$,  be a bi-lateral ideal.
\par
Introduce the rings
\begin{gather}
\Lambda := \{      
(\lambda_{1},  \lambda_{2}) \;|\; \lambda_{1} \in \Lambda_{1}, \; \lambda_{2} \in \Lambda_{2}, 
\; \lambda_{1} - \lambda_{2}  \in J 
\} \\
\Lambda^{'} := \Lambda_{1} / J = \Lambda_{2} / J. 
\end{gather}
The ring structure in $\Lambda$ is
$$
(\lambda_{1},  \;\lambda_{2}) + (\lambda_{1}^{'},  \;\lambda_{2}^{'}) :=  
(\lambda_{1} + \lambda_{1}^{'},  \; \lambda_{2} + \lambda_{2}^{'})
$$
$$
(\lambda_{1},  \; \lambda_{2}) \;. \;(\lambda_{1}^{'},  \; \lambda_{2}^{'}) :=  
(\lambda_{1} . \lambda_{1}^{'},  \; \lambda_{2} . \lambda_{2}^{'})
$$
Define $i_{i}$, $\; i = 1, 2$,  
\begin{equation}
i_{i}: \Lambda  \longrightarrow \Lambda_{1} \oplus  \Lambda_{2},  \;\;\;
i_{i} \;(\lambda_{1}, \;\lambda_{2}) := \lambda_{i}.
\end{equation}
and
\begin{equation}
j_{i} : \Lambda_{1} \oplus \Lambda_{2} \longrightarrow \Lambda^{'} , \;\;\; 
j_{i}  \; (\lambda_{1},  \lambda_{2})  :=   \lambda_{i} / J.
\end{equation} 
\end{definition}
\begin{proposition}  \label{proposition21.9}  
$\{ \Lambda, \Lambda_{1}, \Lambda_{2}, \Lambda^{'}, i_{1},
i_{2}, j_{1}, j_{2} \} $ is a Mayer-Vietoris diagramm,  see 
\S \ref{MayerVietoris}.  
\par
If $\Lambda_{i}, J$  are \emph{localised} rings, then this is a localised Mayer - Vietoris diagramm.
\par
To this localised Mayer-Vietoris diagram  there corresponds a six term exact sequence, see Theorem \ref{exactsequence}.
\end{proposition}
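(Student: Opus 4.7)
The plan is to verify in turn that the diagram built in Definition~\ref{definition21.17} is commutative, satisfies the product property (Hypothesis 2), and has at least one surjective vertical map (Hypothesis 3); then to promote everything to the localised setting; and finally to quote Theorem~\ref{exactsequence} verbatim for the six term exact sequence.

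First I would check commutativity: for any $(\lambda_{1},\lambda_{2})\in\Lambda$ one has by definition $\lambda_{1}-\lambda_{2}\in J$, so $\lambda_{1}\equiv\lambda_{2}\pmod{J}$ and therefore $j_{1}\circ i_{1}(\lambda_{1},\lambda_{2})=\lambda_{1}/J=\lambda_{2}/J=j_{2}\circ i_{2}(\lambda_{1},\lambda_{2})$. For Hypothesis 2, given $\lambda_{1}\in\Lambda_{1}$ and $\lambda_{2}\in\Lambda_{2}$ with $j_{1}(\lambda_{1})=j_{2}(\lambda_{2})$ in $\Lambda'$, the relation $\lambda_{1}-\lambda_{2}\in J$ holds, so the pair $(\lambda_{1},\lambda_{2})$ is an element of $\Lambda$; uniqueness is built into the product structure, and componentwise addition and multiplication in $\Lambda$ are the ones stipulated in the Standing Hypothesis of Section~\ref{MayerVietoris}. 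For Hypothesis 3 both $j_{1}$ and $j_{2}$ are the canonical projections $\Lambda_{i}\to\Lambda_{i}/J=\Lambda'$, and hence are surjective; either one alone suffices.

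Next I would transfer this to the localised case. Assuming $\Lambda_{1},\Lambda_{2},J$ come with filtrations $\{\Lambda_{i,\mu}\}$ and $\{J_{\mu}\}$ satisfying Axioms 1--3 of Definition~\ref{localisedrings}, define
\begin{equation}
\Lambda_{\mu}:=\{\,(\lambda_{1},\lambda_{2})\mid\lambda_{1}\in\Lambda_{1,\mu},\ \lambda_{2}\in\Lambda_{2,\mu},\ \lambda_{1}-\lambda_{2}\in J_{\mu}\,\},\qquad \Lambda'_{\mu}:=\Lambda_{1,\mu}/J_{\mu}.
\end{equation}
Axiom 1 (decreasing filtration) for $\Lambda_{\mu}$ and $\Lambda'_{\mu}$ is immediate from the componentwise definition. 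Axiom 2 ($\mathbb{C}\cdot 1\subset\Lambda_{\mu}$) holds because $(1,1)\in\Lambda_{\mu}$ for all $\mu$ (the difference $0$ lies in every $J_{\mu}$). Axiom 3 (shift under multiplication) reduces componentwise to the same axiom in $\Lambda_{1}$ and $\Lambda_{2}$, because the product $(\lambda_{1},\lambda_{2})\cdot(\lambda'_{1},\lambda'_{2})=(\lambda_{1}\lambda'_{1},\lambda_{2}\lambda'_{2})$ lands in $\Lambda_{\mathit{Min}(\mu,\mu')-1}$, using that $J$ is a bilateral ideal of each $\Lambda_{i}$ so that $\lambda_{1}\lambda'_{1}-\lambda_{2}\lambda'_{2}\in J_{\mathit{Min}(\mu,\mu')-1}$ follows from $\lambda_{1}-\lambda_{2}\in J_{\mu}$ and $\lambda'_{1}-\lambda'_{2}\in J_{\mu'}$. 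The homomorphisms $i_{1},i_{2},j_{1},j_{2}$ all respect the filtration indices by construction, so they are homomorphisms of localised rings in the sense of the Definition of Section~\ref{MayerVietoris}. Hypotheses 1--3 then hold at each level $\mu$, and $\{\Lambda,\Lambda_{1},\Lambda_{2},\Lambda',i_{1},i_{2},j_{1},j_{2}\}$ is a localised Mayer--Vietoris diagram.

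Finally, with the diagram recognised as localised Mayer--Vietoris, the six term exact sequence is immediate from Theorem~\ref{exactsequence}, taking into account the $\mathbb{Z}[\tfrac{1}{2}]$-tensorisation on the $T_{0}^{loc}$ part as dictated by that theorem. The most delicate point, and where I would be most careful, is the verification of Axiom 3 for $\Lambda_{\mu}$: one must check that the difference $\lambda_{1}\lambda'_{1}-\lambda_{2}\lambda'_{2}$ not only lies in $J$ but actually lies in the correctly shifted piece $J_{\mathit{Min}(\mu,\mu')-1}$ of the filtration of $J$, and dually that $\Lambda'_{\mu}=\Lambda_{1,\mu}/J_{\mu}$ really does inherit a well-defined filtration (i.e.\ $J_{\mu}$ is compatible with the filtration of $\Lambda_{1,\mu}$ so that the quotient filtration is decreasing and satisfies Axiom 3). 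Everything else is essentially a matter of unwinding the definitions and invoking the six term theorem.
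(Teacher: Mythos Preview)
Your verification is correct and is exactly the routine check the statement calls for: commutativity via $\lambda_{1}\equiv\lambda_{2}\pmod J$, the fibre-product description for Hypothesis~2, surjectivity of the quotient maps for Hypothesis~3, and then the filtration axioms componentwise in the localised case. The paper itself gives no proof of this proposition at all---it simply states it and moves on to exhibit the connecting homomorphism---so your argument is more than the paper provides, and nothing in it conflicts with the paper's setup.

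One small remark: your caution about Axiom~3 for $\Lambda_{\mu}$ is well placed but easily resolved under the natural convention $J_{\mu}=J\cap\mathcal{A}_{\mu}$. Writing $\lambda_{1}\lambda'_{1}-\lambda_{2}\lambda'_{2}=\lambda_{1}(\lambda'_{1}-\lambda'_{2})+(\lambda_{1}-\lambda_{2})\lambda'_{2}$, each summand lies in $J$ (ideal property) and in $\mathcal{A}_{\mathit{Min}(\mu,\mu')-1}$ (Axiom~3 for $\mathcal{A}$), hence in $J_{\mathit{Min}(\mu,\mu')-1}$. This is presumably what the paper has in mind when it speaks of $J$ being a ``localised bi-lateral ideal,'' though it never spells it out.
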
    
Next, we are going to exibit the corresponding connecting homo-morphism
 $\partial: T^{loc}_{1}(\Lambda^{'}) \longrightarrow T^{loc}_{0}(\Lambda)$. 
\begin{definition}   
Let $[u] \in T_{1}^{loc}(\Lambda^{'})$, where 
$u \in \mathbb{GL}_{n}(\Lambda^{'}_{\mu})$ for some $n$ and $\mu$.
\par
Let $A,  \; resp. \;B,  \in  \mathbb{M}_{n} (\Lambda_{1, \mu})$  be liftings of $u$, resp. $u^{-1}$, in 
$\mathbf{M}_{n} (\Lambda_{1, \mu})$. Such liftings exist because the canonical mapping $j_{1}$ is surjective. 
\par
With $A$ and $B$ one associates 
$$S_{0}= 1 -  BA \in \mathbf{M}_{n} (\Lambda_{1, \mu -1 }), \;\;\;
S_{1}= 1 -  AB \in \mathbf{M}_{n} (\Lambda_{1, \mu -1 }).$$
The matrices $S_{0}$, $S_{1}$ satisfy 
\begin{equation}  
j_{1} (S_{0}) = j_{1} (S_{1}) = 0.
\end{equation}  
With these matrices one associates the invertible matrix    
(ref.  \cite{Connes_Moscovici})
\begin{gather}   
L =
\begin{pmatrix}
S_{0}  &  - (1 + S_{0}) B  \\
A  &  S_{1}
\end{pmatrix}
\in \mathbb{GL}_{2n}(\Lambda_{1, \;\mu - 2});
\end{gather}   
the inverse of the matrix $L$ is 
\begin{gather}  
L^{-1} =
\begin{pmatrix}
S_{0}  &   (1 + S_{0}) B  \\
- A  &  S_{1}
\end{pmatrix}
\in \mathbb{GL}_{2n}(\Lambda_{1, \;\mu - 2}).
\end{gather}    
Let $e_{1},  e_{2} $ be the idempotents
\begin{gather}  
e_{1} =
\begin{pmatrix}
1 &   0  \\
0  &  0
\end{pmatrix}
\in \Lambda_{1, \; \mu -2}, 
\;\; e_{2} =
\begin{pmatrix}
0  &   0  \\
0  &  1
\end{pmatrix}
\in  \Lambda_{2, \; \mu - 2}.
\end{gather}    
The invertible matrix $L$ is used to produce the idempotent
\begin{gather}    
P  :=  L e_{1} L^{-1} =
\begin{pmatrix}
S_{0}^{2}  &   S_{0} (1 + S_{0}) B  \\
S_{1}A  &  1 - S_{1}^{2}
\end{pmatrix}
\in \mathit{Idemp}_{2n}(\Lambda_{1, \;\mu - 2}).
\end{gather} 
with
\begin{equation*}
j_{1} (P) = 
\begin{pmatrix}
0 &0  \\
0 & 1
\end{pmatrix}
.
\end{equation*}
\par
The idempotent $P$ is used to construct the double matrix idempotent  $\mathbf{P}_{U}$
\begin{gather}  
\mathbf{P}_{U}  :=
\begin{pmatrix}
(S_{0}^{2}, \; 0)  &   (S_{0} (1 + S_{0}) B, \; 0)  \\
(S_{1}A, \; 0)   &  (1 - S_{1}^{2}, \; 1 )
\end{pmatrix}
\in \mathit{Idemp}_{2n}(\Lambda_{1, \mu - 2}) \oplus  
\mathbb{I}demp_{2n}(\Lambda_{2, \mu - 2}).
\end{gather}   
with
\begin{equation*}
\pi \;(\mathbf{P}_{U}) =
\begin{pmatrix}
0 & 0 \\
0 & 1
\end{pmatrix}
\in \mathbb{I}demp (\Lambda_{2, \mu}).
\end{equation*} 
The idempotent $\mathbf{P}_{U}$  satisfies    
$(j_{1, \ast} - j_{2, \ast})( \mathbf{P}_{U}) = 0$. Therefore, 
$\mathbf{P}_{U} \in \mathbf{M}_{2n}(\Lambda_{\mu - 2})$.
\end{definition}   
\begin{definition} \ref{connectinghomomorphismsecondform}
\emph{Connecting homomorphism - second form.}   
 (for  $K$-theory see \cite{Connes_Moscovici}).   
\par
For any $[U] \in T_{1}^{loc} (\Lambda^{'})$ we define the \emph{connecting homomorphism} 
$\partial_{II} : T_{1}^{loc}(\Lambda') \longrightarrow T_{0}^{loc}(\Lambda)$ by
\begin{equation}   
\partial_{II} [U] := [ \mathbf{P}_{U}]  - [(e_{2}, e_{2})] \in  T_{0}^{loc} (\Lambda).
\end{equation}   
\end{definition}   
In this case it is easy to check that $\partial$ is compatible with the equivalence relation $\sim_{\mathbb{O}}$.
Indeed, any element $\xi = u \oplus u^{-1} \in \Lambda^{'}_{\mu}$ has an invertible lifting in 
$\Lambda_{1, \; \mu}$, see \S \ref{liftingO}. For this reason, both $A$  and  $B$ may be chosen to be inverse one to each other. Threfore, $S_{0} = 0$ and $S_{1} = 0$ and hence
\begin{gather*}
P = 
\begin{pmatrix}
0 & 0 \\
0 & 1
\end{pmatrix},
\;\;\; 
\partial [U] \; = 0.
\end{gather*}
\subsection{Connecting homomorphism and stabilisation}    

In this sub-section we show how $\partial$ depends on stabilisations.
In the proof of the exactness,  Theorem \ref{exactsequence} .{\bf -iii)},     
we will need to know how the connecting homomorphism, Definition  \ref{connectinghomomorphismsecondform}, behaves with respect to stabilisations. 
For this purpose we consider a more general situation than that considered in the previous section \S \ref{connectinghomomorphismsecondform}. 
\par
Let  $U \in \mathbb{GL}_{m+n} (\Lambda')$ and 
\begin{equation}   
e_{(0,n)} =
\begin{pmatrix}
0 & 0 \\
0 & 1_{n}
\end{pmatrix} 
\in \mathbf{M}_{m+n} (\Lambda')
\end{equation}    
be such that the diagram
\begin{equation}  \label{commutativityhypothesis}
\begin{CD}  
\Lambda^{'m+n}  @  >  {  U }   >>   \Lambda^{'m+n}  \\
@VV  { e_{(0,n)} }  V   @    VV { e_{(0,n)} }  V \\
\Lambda^{'m+n}  @  > { U  } >>   \Lambda^{'m+n} 
\end{CD}
\end{equation}   
is commutative. This condition will be needed to show that 
$\mathbf{P}_{U} \in \mathbb{I}demp_{m+n}(\Lambda)$, formula 
(\label{21.118}).   
\par
The case discussed in 
\S \ref{connectinghomomorphismsecondform}   
corresponds  in this subsection to $m = 0$ .  
\par
We proceed as in 
\S \ref{connectinghomomorphismsecondform}. 
Let $A, \; B \in  \mathbb{GL}_{m+n} (\Lambda_{1, \mu})$  be liftings of $U$, resp. $U^{-1}$, in $\mathbf{M}_{m+n, \mu} (\Lambda_{1})$. Such liftings exist because we assume $j_{1}$ is surjective.
\par
With $A$ and $B$ one associates $S_{0}= 1 -  BA \in \mathbf{GL}_{m+n} (\Lambda_{1, \mu})$ and 
$S_{1}= 1 -  AB \in \mathbb{GL}_{m+n} (\Lambda_{1, \mu})$.
The matrices $S_{0}$, $S_{1}$ satisfy 
\begin{equation}   
j_{1, \ast } (S_{0}) = j_{1, \ast } (S_{1}) = 0.
\end{equation}  
With these matrices one associates the invertible matrix    
\begin{gather}  
L =
\begin{pmatrix}
S_{0}  &  - (1 + S_{0}) B  \\
A  &  S_{1}
\end{pmatrix}
\in \mathbf{GL}_{2(m+n)}(\Lambda_{1, \mu - 2});
\end{gather}   
the inverse of the matrix $L$ is 
\begin{gather}  
L^{-1} =
\begin{pmatrix}
S_{0}  &   (1 + S_{0}) B  \\
- A  &  S_{1}
\end{pmatrix}
\in \mathbf{GL}_{2(m+n)}(\Lambda_{1, \mu - 2}).
\end{gather}  
Let $e_{1} $ be the idempotent
\begin{gather}  
e_{1} = 
\begin{pmatrix}
e_{(0,n)} & 0\\
0 & 0
\end{pmatrix} \;\; \in \mathbb{GL}_{2(m+n)}(\Lambda_{1, \mu }).
\end{gather}   
and

\begin{gather}  
e_{2} = 
\begin{pmatrix}
0 & 0\\
0 & e_{(0,n)}
\end{pmatrix} \;\; \in \mathbb{GL}_{2(m+n)}(\Lambda_{1, \mu }).
\end{gather}   
The invertible matrix $L$ is used to produce the idempotent
\begin{gather}  
P_{U}  :=  L e_{1} L^{-1} =
\begin{pmatrix}
S_{0} \;e_{(0,n)}  \; S_{0} &   S_{0} \; e_{(0,n)}  \; (1 + S_{0}) B  \\
A \;e_{(0,n)}  \;S_{0} &  A \;e_{(0,n)}  \; (1 + S_{0}) B
\end{pmatrix}
\in \mathit{Idemp}_{2(m+n)}(\Lambda_{1, \mu - 2}).
\end{gather}   
\begin{definition}  
\begin{equation}   
 R(U) := P_{U} - e_{2}.   
 \end{equation}   
 \end{definition}  
The idempotent $P_{U}$ is used to construct the double-matrix idempotent
\begin{gather}   
\mathbf{P}_{U}  :=
\begin{pmatrix}
(S_{0}  \;e_{(0,n)}  \; S_{0}, \; 0)  &   (S_{0} \; e_{(0,n)}  \;(1 + S_{0}) B, \; 0)  \\
(A \; e_{(0,n)}  \;S_{0}, \; 0)   & (A \; e_{(0,n)}  \;  (1 - S_{0}) B, \; e_{(0,n)} )
\end{pmatrix}
\\
\in \mathbf{I}demp_{2(m+n)}(\Lambda_{1, \mu - 2} \oplus    \Lambda_{2, \mu - 2}).
\end{gather}    
\par
The matrix   $\mathbf{P}_{U}$  is an idempotent in $\mathbf{M}_{2(m+n)}(\Lambda_{\mu - 2})$.
We may verify directly that  $(j_{1, \ast} - j_{2, \ast}) \mathbf{P}_{U} = 0$. Indeed, $j_{1 \ast}$ is a ring homomorphism
and  $j_{1 \ast} (S_{0}) = j_{1 \ast} (S_{1}) = 0$; finally,  the hypothesis 
(\ref{commutativityhypothesis}) 
gives
\begin{equation}  \ref{21.118}  
j_{1}(A \; e_{(0,n)}  \;  (1 - S_{0}) B = U e_{(0,n)} U^{-1} = e_{(0,n)} = j_{2} (e_{(0,n)} ).
\end{equation}   
 Therefore
$\mathbf{P}_{U} \in \mathbf{M}_{2n}(\Lambda_{\mu - 2})$.
The fact that the matrix $\mathbf{P}_{U}$ is an idempotent in $\mathbf{M}_{2(m+n)}(\Lambda_{\mu -2})$ follows from the fact that $\mathit{P} $ and $e_{2}$  are idempotents along with the discussion above. 
\begin{definition}   \label{connectinghomomorphismthirdform}  
\emph{Connecting homomorphism - third form.} 
\par
We suppose the assumptions and constructions of 
\S \ref{connectinghomomorphismsecondform} 
above are in place. 
For any $[U] \in T_{1}^{loc} (\Lambda^{'})$ one defines the \emph{connecting homomorphism} 
$\partial : T_{1}^{loc}(\Lambda') \longrightarrow T_{0}^{loc}(\Lambda)$ by
\begin{gather}   
\partial [U] := [ \mathbf{P}_{U}]  - [(e_{2}, e_{2})] =  \\
\begin{pmatrix}
(0 , \; 0)  &   (0, \; 0)  \\
(0, \; 0)   & (A e_{(0,n)}  B, \; e_{(0,n)}  )
\end{pmatrix} 
 - [(e_{2}, e_{2})] 
\in  T_{0}^{loc} (\Lambda).  
\end{gather}  
\end{definition}   
\par
It remains to follow up how $\mathbf{P}_{U}$ depends of the choice of the lifts $A$ and $B$ of $U$ and $U^{-1}$. 
A different choice of $A$ and $B$ has the effect of modifying the matrix 
$L$. We will show that if
$A'$ and $B'$ are two such different lifts and $L'$ is the corresponding matrix, then
\begin{equation}    
L' = \tilde{L} \;  L,     \hspace{0.5cm}  with   \hspace{0.2cm}  \tilde{L} \in \mathbf{GL}_{2(m+n)} (\Lambda)
\end{equation}   
and hence the corresponding idempotents $\mathbf{P}_{U} := L e_{1} L^{-1}$,
$\mathbf{P}'_{U} := L' e_{1} L'^{-1}$ are conjugate.
 \par
 To better organise the computation, we change the liftings one at the time. 
 \par
 We begin with $A$. Let $\tilde{A} = A + T$ with $j_{1}(T) = 0$. Let $\tilde{S}_{0} = 1 - B \tilde{A}$, 
 $\tilde{S}_{1} = 1 -  \tilde{A} B$,  $\tilde{L}$,  $\tilde{P}$ and $\tilde{\mathbf{P}}_{U}$ be the corresponding elements.
 A direct computation gives
  
 \begin{gather}    
 \tilde{L} L^{-1} = 
 \begin{pmatrix}
1 - B T &  - B T B  \\
     T   &  1 +  T B
\end{pmatrix}   
\end{gather}     
or
\begin{gather}  
 \tilde{L}  = 
 \begin{pmatrix}
1 - B T &  - B T B  \\
     T   &  1 +  T B
\end{pmatrix}  
 L.
\end{gather}  
We know that $\tilde{L}$ and $L$ are invertible matrices; therefore the RHS of (\ref{21.123})   
is an invertible matrix.
\par
The corresponding idempotent $\tilde{P}$ is
\begin{gather}   
\tilde{P} = \tilde{L}  . e_{1} . \tilde{L}^{-1} =    
\begin{pmatrix}
1 - B T &  - B T B  \\
     T   &  1 +  T B
\end{pmatrix}
 L. e_{1} . L^{-1}
\begin{pmatrix}
1 - B T &  - B T B  \\
     T   &  1 +  T B
\end{pmatrix} 
^{-1} = \\        
= \begin{pmatrix}   
1 - B T &  - B T B  \\
     T   &  1 +  T B
\end{pmatrix}
 P
\begin{pmatrix}
1 - B T &  - B T B  \\
     T   &  1 +  T B
\end{pmatrix} 
^{-1}
\end{gather}   
and furthermore
\begin{gather}   
\tilde{\mathbf{P}}_{U} =
(
\begin{pmatrix}
1 - B T &  - B T B  \\
     T   &  1 +  T B
\end{pmatrix}, \; 1) \;
 \mathbf{P}_{U} \;
( \begin{pmatrix}
1 - B T &  - B T B  \\
     T   &  1 +  T B
\end{pmatrix}, \; 1)
^{-1}.
\end{gather}   
Therefore, $[\tilde{\mathbf{P}}_{U}] = [\mathbf{P}_{U}] \in T_{0}^{loc}(\Lambda)$.
\par
It remains to see what happens if $A$ remains unchanged and the lifting B is changed. Let $\tilde{B} = B + H$, with
$j_{1} (H) = 0$.  Let $\tilde{\tilde{L}}$,  $\tilde{\tilde{P}}$ and  $\tilde{\tilde{\mathbf{P}}}_{U}$ be the corresponding
matrices.  A direct computation gives 
\begin{gather}   \label{21.127}   
\tilde{\tilde{L}} . L^{-1} =    
\begin{pmatrix}  
1 + \Delta_{11}  &  \Delta_{12}  \\
\Delta_{21}  &  1 + \Delta_{22}
\end{pmatrix}
  \in \mathbf{M}_{2n} ( \Lambda_{\mu -4}) 
\end{gather}    
where 
\begin{align*}
\Delta_{11}  =  &  HA - HAHA - BAHA     \\
\Delta_{12}  =  & -2H + HAB + HAH + BAH - BAHAB - HAHAB  \\
\Delta_{21}   =  & AHA    \\
\Delta_{22}  =   & - AH + AHAB.
\end{align*} 

The RHS of (\ref{21.127})   
is a product of invertible matrices; therefore, it is an invertible matrix. Proceeding as above we get
\begin{gather}   
\tilde{\tilde{\mathbf{P}}}_{U} =  
( \begin{pmatrix}  
1 + \Delta_{11}  &  \Delta_{12}  \\
\Delta_{21}  &  1 + \Delta_{22}
\end{pmatrix}
, \; 1)
\; \mathbf{P}_{U} \; 
( \begin{pmatrix}  
1 + \Delta_{11}  &  \Delta_{12}  \\
\Delta_{21}  &  1 + \Delta_{22}
\end{pmatrix},
\; 1)
^{-1}.
\end{gather}  
This completes the discussion about the choice of the liftings $A$ and $B$.
 

\part{Topological index and analytical index. Reformulation of index theory.}
\section{Level I: Index theory at the $T^{loc}_{\ast}$-theory level.}
\begin{definition}
Let  $\mathcal{J}  \subset \mathcal{A}$  be a localised bi-lateral ideal of the unital ring $\mathcal{A}$.
There correponds the short ring exact sequence     
\begin{equation}  \label{exactsequence20}
0 \longrightarrow  \mathcal{J}_{\mu} \overset{\iota}{\longrightarrow}  \mathcal{A}_{\mu} 
  \overset{\pi}{\longrightarrow}   \mathcal{A}_{\mu}/\mathcal{J}_{\mu}  \longrightarrow  0.
\end{equation}
\begin{definition}
The ring $\mathcal{A}_{\mu} / \mathcal{J}_{\mu}$  is the analogue of the Calkin ring. 
If $U \in \mathcal{A}_{\mu}$ then $\pi (U) \; \in  \mathcal{A}/\mathcal{J}$  is called
the \emph{symbol} of $U$.
\end{definition}
\par
Consider the 6-term exact sequence in $T^{loc}$-theory (\ref{secondexactsequence})  
\begin{gather}   \ref{secondexactsequence}  
T_{1}^{loc}(\mathcal{A}, \mathcal{J}) \overset{i_{\ast}}{\longrightarrow}
T_{1}^{loc}(\mathcal{A}) \overset{\pi_{\ast}}{\longrightarrow}
T_{1}^{loc}(\mathcal{A}/\mathcal{J}) \overset{\partial}{\longrightarrow} \\
T_{0}^{loc}(\mathcal{A}, \mathcal{J}) 
\otimes \mathbb{Z}[\frac{1}{2}]
\overset{i_{\ast}}{\longrightarrow} 
T_{0}^{loc}(\mathcal{A}) 
\otimes \mathbb{Z}[\frac{1}{2}]
\overset{\pi_{\ast}}{\longrightarrow} 
T_{0}^{loc}(\mathcal{A}/\mathcal{J})
\otimes \mathbb{Z}[\frac{1}{2}].
\end{gather}
\par
\begin{enumerate}
\vspace{0.3cm}
\item
\emph{Topological index}  of $u$ is
\begin{equation}
Top^{T}Index \; (u) := \; \delta [u] \in T_{0}^{loc}(\mathcal{A}) \otimes \mathbb{Z}[\frac{1}{2}],
\end{equation}
where  $[u] \in  T_{1}^{loc} (\mathcal{A}/ \mathcal{J})$.
\vspace{0.5cm}
\item
\emph{Analytical Index} of $u$ is
\end{enumerate}
\begin{equation}
An^{T}Index \; (u) := \; \mathbf{R} \;( \delta  [u] ) \; =  \;  \delta_{II}  [u] 
\in T^{loc}_{0} (\mathcal{A}) \otimes \mathbb{Z}[\frac{1}{2}]
\end{equation}
where $\delta_{II}$ is the second definition of boundary map,   see  Definition
\ref{connectinghomomorphismsecondform}.
\end{definition}
\vspace{0.3cm}
{\bf Case 1.}
\begin{problem}
Define significant classes of extensions (\ref{exactsequence20})  
\begin{equation}
\Delta (\mathcal{A}, \mathcal{J}) \;:=\; Top^{T}Index \;(u) \; - An^{T}Index (u)  
\end{equation}
can be computed.
\end{problem}
\vspace{0.5cm}
{\bf Case 2.}
\begin{conjecture}  
\vspace{0.1cm}
-1) Let $M^{2l}$  be a closed compact quasi-conformal manifold. Let $\Omega^{\ast} (M)$ be the 
algebra of differential forms on $M^{2l}$. Let $H$ be the Hilbert space of $L_{2}$-forms of degree  $l$. 
Let 
 be the exact sequence associated to the short exact sequence 
$$
  0  \longrightarrow  \mathcal{L}^{(1, \infty)}  \overset{\iota}{\longrightarrow}    \Psi Diff  \overset{\pi}{\longrightarrow}
    \Psi Diff  /\mathcal{L}^{1, \infty}  \longrightarrow  0.   
$$
For any  $ u \in  \mathbb{GL}_{N}(\Psi Diff  / \mathcal{L}^{1, \infty})$ one has
\begin{equation}
 \;(Top^{T}Index) \; (u) \otimes {\mathbb{Q}}= \; (An^{T}Index) \; (u)  \otimes {\mathbb{Q}}.
\end{equation}
\vspace{0.2cm} 
\par
-2)  Let $A$  be an elliptic pseudo-differential operator on $M$. Let
$u = \pi  (A)  \in  \mathbb{GL}_{N} (\mathcal{A} / \mathcal{J})$ be the image of $A$ in the quotient space. 
Then
\begin{gather}  \label{20.8}   
An^{T}Index \; (u) \otimes \mathbb{Q}  \;=\; Top^{T}Index \; (u) \;  \otimes \mathbb{Q} = \\  
 \;=\; \frac{1}{(2\pi i)^{q}} \frac{q!}{(2q)!}(-1)^{dim M} \; Ch  (u)  \; \cap \; [T^{\ast}M].
\end{gather}
$Ch  (u)$ in the formula  (\ref{20.8}) 
 is the periodic cyclic homology of $u$.  
\vspace{0.2cm}
\par
-3) Let $A$  be a pseudo-differential elliptic operator on the quasi-conformal manifold $M$. 
Let $u$  be its \emph{classical} symbol.  Produce the residue
operator $\mathbf{R}(u)$  
($K_{1}$ is replaced by  $T^{loc}_{1})$.
Let $f \in  C_{AS}^{q} (M)$  be an Alexander - Spanier co-cycle; let $[f]$  be its co-homology class.
Let $\tau (T^{\ast} (M))$  be the \emph{Todd class} of $M$.
\par
Then   
\begin{gather}
An^{T}Index \; (u)  \otimes_{\mathbb{Q}} [f] \;=\; Top^{T}Index \; (u) \;  \otimes_{\mathbb{Q}} [f] \; = \\  
 \;=\; \frac{1}{(2\pi i)^{q}} \frac{q!}{(2q)!}(-1)^{dim M} \;Ch  (u)  \; \cup \; \tau  (T^{\ast} (M)) \; \cap \; [T^{\ast}M].
\end{gather}
\par
\end{conjecture}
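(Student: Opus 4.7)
The plan is to split the three-part conjecture into two conceptual stages. The first stage identifies the two $T$-theoretic connecting homomorphisms $\delta$ and $\delta_{II}$ rationally: both are connecting maps in six-term exact sequences attached to the \emph{same} ideal inclusion $\mathcal{J} \subset \mathcal{A}$ built from the local Mayer--Vietoris diagram of Section \ref{MayerVietoris}, so an abstract uniqueness argument, after tensoring with $\mathbb{Z}[\frac{1}{2}]$, forces $\delta[u] = \delta_{II}[u]$. I would verify this by tracking the double-matrix lift $(1_n, 1_n, \tilde{U}(u))$ used in Definition \ref{partial} through the elementary-matrix factorisation (\ref{factorisation}), and checking that modulo $\sim_{\mathbf{O}}$ it produces the same idempotent class as $\mathbf{P}_U - (e_2, e_2)$ constructed from the Connes--Moscovici lift $L$ in Definition \ref{connectinghomomorphismsecondform}. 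This settles part (1) and reduces parts (2), (3) to a single computation of $\delta_{II}[u]$.

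The second stage is to push $\delta_{II}[u] \in T^{loc}_{0}(\mathcal{A}) \otimes \mathbb{Q}$ through the local periodic Chern character $Ch_{ev}$ of Section 6 and identify the result with the claimed cohomological expression. Using the explicit form of $R(A) = P_U - e_2$ together with the Chern-type formula $C_q(R(A)) = \mathrm{Tr} \otimes_\Lambda^{2q+1} R(A)$ of the local index theorem section, one obtains a \emph{local} periodic cyclic cycle whose representative kernels are supported arbitrarily close to the diagonal of $M \times M$. Pairing with the Alexander--Spanier cocycle $f$ via the canonical pairing between $H^{loc,per,\lambda}_{ev}(\mathcal{A})$ and $H^{loc, \ast}_{AS}(\mathcal{A})$ and then restricting to the diagonal (Stage III in the introduction) converts the algebraic trace into a symbolic integral which, by the standard Connes--Moscovici local computation, reproduces the Atiyah--Singer integrand $\frac{1}{(2\pi i)^{q}}\frac{q!}{(2q)!}(-1)^{\dim M}\, Ch(u) \cup \tau(T^{\ast}M) \cap [T^{\ast}M]$. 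The normalisation constants come directly from the periodic Chern character conventions fixed in the definition of $Ch_{ev,\mathcal{A}_\mu}$.

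The main obstacle is part (3), where smoothness is replaced by quasi-conformality: the Todd class $\tau(T^{\ast}M)$ on a purely quasi-conformal manifold is not given by Chern--Weil but only through the Donaldson--Sullivan frame-bundle construction, and the hard step is to identify it with the local Chern--Weil representative of the index cycle via the Connes--Sullivan--Teleman approach. A secondary difficulty is ensuring that the products of distributions appearing at Stage III (restriction to the diagonal) are well-defined in the quasi-conformal setting; this is precisely where the ideal $\mathcal{L}^{(1,\infty)}$, rather than the compact operators, must be used, and where the Dixmier trace replaces the ordinary operator trace. The factor $\mathbb{Z}[\frac{1}{2}]$ in the six-term sequence is absorbed by tensoring with $\mathbb{Q}$, but its geometric meaning, flagged in the introduction as tied to Pontrjagin classes and the Kirby--Siebenmann obstruction, suggests that an integrally refined version of the conjecture may fail and would require a separate analysis.
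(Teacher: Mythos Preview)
The statement you are attempting to prove is explicitly labeled a \emph{conjecture} in the paper, and the paper provides \emph{no proof} of it. There is nothing to compare your proposal against: the author states the conjecture at Level~I of the index-theory program, then moves on to Level~II (another conjecture) and Level~III without returning to establish any of the three parts. Your proposal is therefore not a reconstruction of an existing argument but an outline for attacking an open problem in the paper's framework.

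As a strategy your two-stage plan is sensible and tracks the structure the author sets up: identifying $\delta$ with $\delta_{II}$ and then computing the Chern character of $\mathbf{P}_U - (e_2,e_2)$ via the Connes--Moscovici machinery is exactly the route the paper's definitions are designed to support. But you should be aware that several of the steps you describe as routine are not established in the paper and are themselves nontrivial. First, the paper never proves that the two connecting maps $\partial$ (Definition~\ref{partial}) and $\partial_{II}$ (Definition~\ref{connectinghomomorphismsecondform}) agree, even after tensoring with $\mathbb{Z}[\tfrac12]$ or $\mathbb{Q}$; your appeal to an ``abstract uniqueness argument'' would need the six-term sequence to be a genuine long exact sequence of functors with a universal $\partial$, and the paper only proves an \emph{open} six-term sequence (Theorem~49), so uniqueness is not available off the shelf. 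Second, the pairing between $H^{loc,per,\lambda}_{ev}(\mathcal{A})$ and local Alexander--Spanier cohomology that you invoke is not constructed in the paper. Third, as you yourself flag, the quasi-conformal Todd class and the Stage~III restriction to the diagonal are precisely the places where the existing literature (Connes--Sullivan--Teleman) requires hard analysis, and the paper offers no new input there. In short: your outline is a plausible research plan, not a proof, and the paper does not claim otherwise.
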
   

\section{Level II:  Index Theory in \emph{local} periodic cyclic homology.}

\begin{definition}
Let $\mathcal{A}_{\mu}$  be a localised ring.  Consider the exact sequence   \ref{exactsequence20}.
Let $u \in T^{loc}_{1}(\mathcal{A}_{\mu}/ \mathcal{J}_{\mu})$.
\begin{enumerate}
\vspace{0.3cm}
\item
\emph{Topological index}  of $u$ is
\begin{equation}
Top^{Ch}Index \; (u) := \; Ch \delta [u] \in C^{loc, per, \lambda}(\mathcal{A}),
\end{equation}
\vspace{0.5cm}
\item
\emph{Analytical Index} of $u$ is
\end{enumerate}
\begin{equation}
An^{Ch}Index \; (u) := \; Ch_{ev} \;( \delta  \;[u] ) \; =  \;  Ch_{ev}( \delta_{II}  \;[u])
\in H^{loc, per, \lambda}_{0} (\mathcal{A}),
\end{equation}  
see  Definition  \ref{connectinghomomorphismsecondform}. 
\item
The topological index could be defined in a different way. One could consider the connecting
homomorphism $\delta_{\lambda}$  in the local cyclic periodic homology instead of the connecting
homomorphism in the $T^{loc}_{\ast}$ exact sequence. This leads to the 
\emph{topological index}   
\begin{equation}
Top^{Ch_{\lambda}}Index \; (u) \;:= \; Ch \delta_{\lambda} [u] \in H^{loc, per, \lambda}_{ev}(\mathcal{A}),
\end{equation}
\end{definition}
\vspace{0.3cm}
{\bf Case 1}
\begin{problem}  
\par
Define significant classes of extensions (\ref{exactsequence20})  
for which the difference
\begin{equation}
\Delta^{Ch} (\mathcal{A}, \mathcal{J}) \;:= \; Top^{Ch}Index \;(u) \; - \; An^{Ch}Index \;(u)  
\end{equation}
can be computed.
\end{problem}
{\bf  Case II.}
\begin{conjecture} 
Let $M$  be a quasi-conformal closed manifold. Let $u \in  \mathbb{GL}_{N} (\mathcal{A}_{\mu}/ \mathcal{J}_{\mu})$, where $(\mathcal{A}_{\mu})$  is the algebra of  pseudo-differential  operators on $M$  and  $\mathcal{J}) = \mathcal{L}^{(\frac{1}{n}, \infty)}$,    localised by the support of operators about the diagonal.  
Then
\begin{equation}
Top^{Ch}Index (u) \;=\; An^{Ch}Index (u).
\end{equation}
\end{conjecture}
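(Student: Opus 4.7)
The plan is to reduce the conjectured equality to the statement that the two constructions of the connecting homomorphism, namely $\partial$ of Definition \ref{partial} and $\partial_{II}$ of Definition \ref{connectinghomomorphismsecondform}, produce cohomologous cycles in the local periodic cyclic $(b,B)$-bicomplex after applying the local Chern character. Both $Top^{Ch}Index(u)$ and $An^{Ch}Index(u)$ are, by definition, the image under $Ch_{ev}$ of a particular local idempotent associated to a lift of $[u]\in T_1^{loc}(\mathcal{A}/\mathcal{J})$ to $\mathbb{M}(\Lambda_\mu)$. Once these two idempotents are shown to be locally conjugate in $\mathbb{M}(\Lambda_{\mu'})$ for some $\mu'\le\mu$, or at least cyclically cohomologous, equality of their Chern classes will follow from the homotopy (i.e.\ local conjugation) invariance of $Ch_{ev}$.

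First I would identify the two competing idempotents. On the topological side, $\partial[u]$ is represented by $p(1_n,1_n,\tilde{U}(u))-[\Lambda^n(\mathcal{A}_\mu)]$, where $\tilde{U}(u)$ is the lifting of $u\oplus u^{-1}$ obtained from the elementary-matrix factorisation \ref{factorisation}. On the analytical side, $\partial_{II}[u]$ is represented by $\mathbf{P}_U-(e_2,e_2)$, built from arbitrary lifts $A,B\in\mathbb{M}_n(\Lambda_{1,\mu})$ of $u$ and $u^{-1}$ by the Connes--Moscovici formula. Both reduce, under $j_{2,\ast}$, to the trivial idempotent $1_n\oplus 0_n$, so their difference lies in $T_0^{loc}(\mathcal{A},\mathcal{J})\otimes\mathbb{Z}[\tfrac12]$. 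Choosing $A=\tilde\alpha_1$ and $B=\tilde\beta_1$ exactly as in \S \ref{liftingO} makes $\tilde{U}(u)$ itself an allowed $(A,B)$-pair for the second construction, so that the two explicit $(b,B)$-cochains are comparable term by term.

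Next I would invoke the invariance of $\mathbf{P}_U$ under change of the lifts $A,B$, established at the end of Section \ref{connectinghomomorphismsecondform} by exhibiting explicit conjugating invertibles. Combined with the invariance of the Chern character under local conjugation (a consequence of the Connes identity $Ch_{ev,\mathcal{A}_\mu}(ueu^{-1})=Ch_{ev,\mathcal{A}_\mu}(e)$ modulo a $(b+B)$-coboundary whose support is controlled by the ring filtration), the problem is reduced to a purely algebraic chain-level identity between the two explicit cycles $Ch_{ev}(p(1_n,1_n,\tilde U(u)))$ and $Ch_{ev}(\mathbf{P}_{\tilde U(u)})$ attached to the \emph{same} lift. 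Here I would exploit the fact that, for the canonical lift $\tilde U(u)$, the off-diagonal blocks of $\mathbf{P}_U$ depend algebraically on $S_0=1-BA$ and $S_1=1-AB$, both of which lie in the ideal $\mathcal{J}_\mu$; this forces the discrepancy to live in the relative complex $(b,B)(\mathcal{A},\mathcal{J})$, where summability of $\mathcal{J}$ guarantees that the transgression cochain is well defined.

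The hard part will be the control of the $(b+B)$-coboundary cochain that witnesses cohomologousness: I must show that it belongs to the localised sub-complex $(b,B)(\Lambda_{\mu''})$ for some $\mu''$ that is bounded as a function of $\mu$, so that the identity survives the projective limit defining $H^{loc,per,\lambda}_{ev}(\mathcal{A})$. This is where the quasi-conformal hypothesis on $M$, the $(1/n,\infty)$-summability of $\mathcal{J}=\mathcal{L}^{(1/n,\infty)}$, and the filtration by supports near the diagonal must act together: the traces defining $Ch_{ev}$ converge only because of summability, while filtration-preservation of the transgression requires that the supports of the compositions appearing in $S_0,S_1$, and hence in the Connes--Moscovici blocks, remain inside $\Lambda_{1,\mu-k}$ for a uniform $k$. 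I expect the central technical point to lie exactly here, and to require the Connes--Sullivan--Teleman regularity theory of elliptic pseudo-differential operators on quasi-conformal manifolds, in order to obtain the uniform support estimates that close the argument.
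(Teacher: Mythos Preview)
The statement you are attempting to prove is labeled in the paper as a \emph{Conjecture}, not a theorem; the paper offers no proof of it whatsoever. There is therefore nothing in the paper against which to compare your argument. The author explicitly frames the equality $Top^{Ch}Index(u)=An^{Ch}Index(u)$ as an expected result in the quasi-conformal setting, parallel to the Level~I conjecture preceding it, and leaves it open.

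As for your proposal on its own merits: the reduction you outline --- identifying the two idempotents coming from $\partial$ and $\partial_{II}$, choosing compatible lifts so that the two constructions become directly comparable, and then invoking local-conjugation invariance of $Ch_{ev}$ --- is a reasonable strategy for the \emph{algebraic} part of the problem. Indeed, much of that machinery (independence of $\mathbf{P}_U$ from the choice of lifts, the lifting procedure of \S\ref{liftingO}) is already in the paper. However, you yourself correctly flag that the genuine difficulty is analytic: one must show that the transgression cochain witnessing cohomologousness remains in the filtered subcomplex $(b,B)(\Lambda_{\mu''})$ for a uniformly bounded $\mu''$, so that the identity survives the projective limit. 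That step requires the quasi-conformal pseudodifferential calculus and the $\mathcal{L}^{(1/n,\infty)}$-summability in an essential way, and you have only gestured at it. What you have written is a plausible plan of attack on an open conjecture, not a proof; the ``hard part'' you isolate at the end is precisely the content of the conjecture.
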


\section{Level III:  Index theory restricted at the diagonal.}
This situation applies only when the local periodic cyclic homology has a limit to the diagonal.
It depends on the regularity of the structure.  The classical index theorems belong to this class.
\par
\vspace{1cm}


\part{Noncommutative Topology}   
\noindent 
\abstract{
We intend to produce a theory which generalises topological spaces;  we call it \emph{non-commutative topology}. In non-commutative \emph{differential} geometry the \emph{basic homology theory} is the \emph{periodic cyclic homology}, based on the bi-complex $(b, B)$. In non-commutative topology this structure will be replaced by the bi-complex $(\tilde{b}, d)$; the boundary $\tilde{b}$ is called \emph{modified Hochschild boundary}.  These ideas combine A. Connes' work \cite{Connes_Book}  with ideas of the articles by Teleman N. and Teleman K.
\cite{Teleman_1966},  \cite{Teleman_K}, \cite{Teleman_arXiv_I},  
\cite{Teleman_arXiv_II}, \cite{Teleman_arXiv_III}, \cite{Teleman_arXiv_IV},   \cite{Teleman_arXiv_V}.
 }  

\section{Modified Hochschild homology}   

The Alexander - Spanier co-homology uses solely the topology of the space; it does not require any kind of analytical regularity.  We use the Alexander - Spanier construction and the definition of local periodic cyclic co/homology as the departure point for non-commutative topology.  
Recall we extended the  Alexander - Spanier co-homology to arbitrary \emph{localised rings},  
see \S 21. 
\par
As an application we compute the local modified periodic cyclic homology of the topological algebra of \emph{smooth} functions, Theorem \label{theorem1.24}, 
of the Banach agebra of \emph{continuous} functions, Theorem \label{theorem2.24},  
and of the algebra of arbitrary functions, Theorem \ref{theorem3.24}, on a smooth manifold.
These results are significant because it is known that the Hochschild and (periodic) cyclic homology of \emph{Banach algebras}  are either trivial or not interesting, see Connes \cite{Connes_IHES}, \cite{Connes_Book}, \cite{Connes_Moscovici}. 
\emph{Entire cyclic cohomology}, due to  Connes \cite{Connes_Book}
 (\cite{Connes_Moscovici}, \cite{Connes_Gromov_Moscovici})
gives a different solution to the problem of defining the cyclic homology of Banach algebras.
  The chains of the entire cyclic cohomology are elements of the infinite product $(b, B)$ which satisfy a certain bi-degree asymptotic growth condition.  
Connes constructed a Chern character  of $\theta$-summable Fredholm modules with values in the entire cyclic cohomology, 
see Connes \cite{Connes1a}.

\section{The idempotent $\Pi$.}   
Recall the operator $\sigma$ was introduced in \S  2.4.10  It is well defined on the whole complex  $C_{\ast}(\mathcal{A})$ 
\begin{equation}  
 d b + bd = 1 - \sigma,
\end{equation}    
where $d$ is the non-localised Alexander - Spanier co-boundary and $b$ is the Hochschild boundary.
\par
The main properties of  $\sigma$ are
\begin{enumerate}
\item 
$\sigma$ commutes both with $d$ and $b$.  
\item
$\sigma$ is a chain homomorphism both in the Alexander-Spanier and in the Hochschild complex. Hence, the range of the operator $\sigma$, and its powers, are sub-complexes both in the Alexander - Spanier and Hochschild complexes. 
\item
The range of the homomorphism $\Pi$  consists of non-degenerate chains.
\item 
$\sigma$ is homotopic to the identity. Therefore, the inclusions of these sub-complexes into the Alexander - Spanier, resp. Hochschild, complexes induce isomorphisms between  their homologies.
\end{enumerate} 
\par
The operator $\Pi_{(k)}$ is defined by the formula  
\begin{equation}  
\Pi_{(k)} := (1 - bd) \sigma_{(k)}^{k}.
\end{equation}   
The opertor $\Pi_{(k)}$ has the properties
\begin{enumerate}
\item
$\Pi_{(k)}$ is an idempotent. Let $\{ \tilde{C}_{\ast}({\mathcal{A}}) \}$ be its range.
Hence 
\begin{equation}   
(1 - bd) \sigma_{(k)}^{k} = 1 \;\; on \; \tilde{C}_{k}(\mathcal{A})
\end{equation}
\item
$\Pi_{(k)}$ commutes with $d$, $b$, and hence with $\sigma$.  
Therefore
\item
$\{ \tilde{C}_{\ast}({\mathcal{A}})$    is a $b$ and $d$ sub-complex
\item
The mapping  $\Pi_{(k)}$  induces isomorphisms  between the $b$, resp. $d$, homologies.
\par
The sub-complex $\{   (1 - \Pi_{k})C_{k}(\mathcal{A}) \}_{k \in \mathbb{Z}} $ is acyclic.
\end{enumerate}
\section{Modified Hochschild homology.}     
On the range of the idempotent $\Pi$, i.e.   $\{\tilde{C}_{\ast}({\mathcal{A}}) \}$,   one has the identity
\begin{equation}   
\sigma^{k} = 1 + bd\sigma^{k}.
\end{equation}    
From this formula we get
\begin{proposition}   
On the complex $\{\tilde{C}_{\ast}({\mathcal{A}}) \}$  one has the identity
\begin{equation}  \label{identity.24}   
 0 = \sum_{k=1}^{n+1}  (-1)^{k-1} C_{n+1}^{k} (bd)^{k}
+ \sum_{k=1}^{n} (-1)^{k-1} C_{n}^{k}  (db)^{k}.
\end{equation}
\end{proposition}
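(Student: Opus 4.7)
The plan is to abbreviate $A := bd$ and $B := db$. Since $\sum_{k=1}^{m}(-1)^{k-1}\binom{m}{k}z^{k} = 1-(1-z)^{m}$, the stated identity is equivalent to
\[
(1-A)^{n+1} + (1-B)^{n} = 2 \qquad \text{on } \tilde{C}_{n}(\mathcal{A}),
\]
and this is what I would aim for directly.

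First I would record two purely algebraic facts. Since $b^{2}=0$ and $d^{2}=0$, one has $AB = bd\cdot db = b\,d^{2}\,b = 0$ and symmetrically $BA = 0$; moreover $\sigma$ commutes with $b$ and $d$ (property (1) of the list above), hence with $A$ and $B$. Thus $\sigma$, $A$, $B$ pairwise commute. Now $\sigma = 1 - A - B$, and when one expands $(1-A-B)^{k}$ by the (multi)binomial theorem, every cross-term involving both an $A$ and a $B$ vanishes since $AB = 0$; only the pure-$A$ and pure-$B$ monomials survive. This produces the master identity
\[
\sigma^{k} = (1-A)^{k} + (1-B)^{k} - 1,
\]
valid on all of $C_{\ast}(\mathcal{A})$ (one can alternatively verify it by a one-line induction on $k$).

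Next I would invoke the hypothesis that we are working on the range of $\Pi$: on $\tilde{C}_{n}(\mathcal{A})$ the identity $\Pi_{(n)}=1$ reads $(1-A)\sigma^{n} = 1$, and combined with $[(1-A),\sigma^{n}]=0$ this says that $(1-A)$ and $\sigma^{n}$ are mutual two-sided inverses there. Substituting the master identity with $k=n$ into $(1-A)\sigma^{n} = 1$ gives
\[
1 = (1-A)^{n+1} + (1-A)(1-B)^{n} - (1-A).
\]
Because $AB = 0$ forces $(1-A)B^{k} = B^{k}$ for every $k\geq 1$, expanding the middle term yields $(1-A)(1-B)^{n} = (1-B)^{n} - A$. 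Plugging this back collapses the previous line to $(1-A)^{n+1} + (1-B)^{n} = 2$, which is the identity we wanted.

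The main conceptual step, and essentially the only place where one can go wrong, is recognising the orthogonality $AB = BA = 0$ coming from $b^{2}=d^{2}=0$; once this is in hand, commutativity of $\sigma$, $A$, $B$ together with the binomial theorem applied to $\sigma^{n}$ forces the rest.
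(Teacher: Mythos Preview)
Your argument is correct and follows essentially the same route as the paper's proof: both start from the defining relation $(1-bd)\sigma^{n}=1$ on $\tilde{C}_{n}(\mathcal{A})$ and expand using $\sigma = 1 - bd - db$. The paper states this in one line (``make the substitution $\sigma = 1 - (db+bd)$'') and defers the bookkeeping to \cite{Teleman5}, whereas you have made the mechanism explicit by isolating the orthogonality $AB=BA=0$ and the resulting closed form $\sigma^{k}=(1-A)^{k}+(1-B)^{k}-1$, which is exactly what makes the expansion collapse.
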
  
\begin{proof}  
The relation  (\ref{identity.24}) 
is obtained from the formula 
\begin{equation}
 1 = ( 1 - bd ) \sigma^{n}.
\end{equation}
by making the substitution  $\sigma = 1 - (db + bd)$.
For more details see  \cite{Teleman5}.
\end{proof}   

Formula (\ref{identity.24}) 
suggests to introduce on the spaces $C_{\ast} (\mathcal{A})$
the operators $\tilde{b}$,  $\tilde{d}$.
\begin{definition}   
The operators $\tilde{b}_{n}$ and $\tilde{d}_{n}$ acting on $C_{n}(\mathcal{A})$ are defined by the formulae
\begin{gather}  \label{tildeb}  
\tilde{b}_{n} :=  b \sum_{k=1}^{n} (-1)^{k-1} C_{n}^{k}  (db)^{k-1} \\
\tilde{d}^{n} :=  d \sum_{k=1}^{n} (-1)^{k-1} C_{n+1}^{k}  (bd)^{k-1}.
\end{gather}
\end{definition}   
\begin{proposition}  
The operators $\tilde{b}$ and $d$ anti-commute in  $\tilde{C}(\mathcal{A})$;
the same relation holds for the operators $b$  and $\tilde{d}^{n}$
\begin{gather}
0 = \tilde{b}_{n+1} d_{n} + d_{n-1} \tilde{b}_{n} \\
0 = \tilde{d}_{n-1} b_{n} + b_{n+1} \tilde{d}_{n}.
\end{gather}
\end{proposition}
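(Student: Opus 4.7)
The plan is to recognize that both anti-commutation identities are direct algebraic consequences of the identity (\ref{identity.24}) established in the preceding Proposition, together with the basic facts $d^{2}=0$ and $b^{2}=0$. So the proof will be essentially a bookkeeping computation; no new geometric input is needed.

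First I would focus on the first relation. Writing $\tilde{b}_{n}=b\,S_{n}$ with $S_{n}:=\sum_{k=1}^{n}(-1)^{k-1}C_{n}^{k}(db)^{k-1}$, I would compute the two summands separately. For $d_{n-1}\tilde{b}_{n}=d\,b\,S_{n}$, I observe that $d\cdot b\cdot(db)^{k-1}=(db)^{k}$, so this term becomes $\sum_{k=1}^{n}(-1)^{k-1}C_{n}^{k}(db)^{k}$. For $\tilde{b}_{n+1}d_{n}=b\,S_{n+1}\,d$, the key word-level identity is $b\cdot(db)^{k-1}\cdot d=(bd)^{k}$ (both sides being the alternating word of length $2k$ starting with $b$ and ending with $d$), which yields $\sum_{k=1}^{n+1}(-1)^{k-1}C_{n+1}^{k}(bd)^{k}$. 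Adding the two contributions reproduces exactly the right-hand side of (\ref{identity.24}), which vanishes on $\tilde{C}_{\ast}(\mathcal{A})$. This gives $\tilde{b}_{n+1}d_{n}+d_{n-1}\tilde{b}_{n}=0$.

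Next I would treat the second relation $b_{n+1}\tilde{d}_{n}+\tilde{d}_{n-1}b_{n}=0$ by a symmetric argument, interchanging the roles of $b$ and $d$ and using $T_{n}:=\sum_{k=1}^{n}(-1)^{k-1}C_{n+1}^{k}(bd)^{k-1}$ so that $\tilde{d}_{n}=d\,T_{n}$. The same two elementary moves ($d^{2}=0$ used to collapse cross-terms; $b\cdot(db)^{k-1}\cdot d=(bd)^{k}$ and its mirror $d\cdot(bd)^{k-1}\cdot b=(db)^{k}$) will again reduce the sum to an instance of (\ref{identity.24}), now with the roles of the two binomial sums swapped.

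Since everything reduces to manipulating words in $b$ and $d$ and reading off coefficients, there is no real obstacle; the only thing to be careful about is matching indices on the binomial coefficients ($C_{n}^{k}$ versus $C_{n+1}^{k}$) so that the resulting expression coincides with (\ref{identity.24}) applied at the correct degree. I would double-check this by writing out the low-degree cases $n=1,2$ explicitly before stating the general identity. The hypothesis that we work on the range of $\Pi$ enters only through the validity of (\ref{identity.24}); the algebraic rearrangement itself is valid on all of $C_{\ast}(\mathcal{A})$.
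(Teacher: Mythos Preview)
Your approach is correct and is essentially the only natural one: both anti-commutation relations are designed so that, after the word rewritings $db\cdot(db)^{k-1}=(db)^{k}$, $b\cdot(db)^{k-1}\cdot d=(bd)^{k}$ and their mirrors, the sum $\tilde{b}d+d\tilde{b}$ (resp.\ $b\tilde{d}+\tilde{d}b$) collapses to the identity (\ref{identity.24}). The paper states the proposition without proof, so there is nothing to compare against; your argument is exactly what the author has in mind.

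One remark on the index bookkeeping you flag. For the first relation your computation reproduces (\ref{identity.24}) on the nose. For the second relation, with $\tilde{d}_{n}=d\sum_{k=1}^{n}(-1)^{k-1}C_{n+1}^{k}(bd)^{k-1}$ as printed, the sum $b_{n+1}\tilde{d}_{n}+\tilde{d}_{n-1}b_{n}$ matches (\ref{identity.24}) \emph{except} for the top terms $(-1)^{n}(bd)^{n+1}$ and $(-1)^{n-1}(db)^{n}$. This discrepancy disappears if the upper summation limit in the definition of $\tilde{d}_{n}$ is taken to be $n+1$ rather than $n$ (so that $\tilde{d}_{n}$ on $C_{n}$ uses all of $C_{n+1}^{1},\dots,C_{n+1}^{n+1}$, symmetrically to $\tilde{b}_{n}$ using $C_{n}^{1},\dots,C_{n}^{n}$); with that reading your argument goes through verbatim. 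When you write up the proof, it is worth noting this point explicitly and checking the case $n=1$ to confirm which convention makes the second identity hold.
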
  
\begin{definition}  
  The homology of the complex  
$\{   C_{\ast} (\mathcal{A}, \; \tilde{b})  \}_{\ast}$ is called \emph{modified Hochschild homology}. 
\end{definition}
The expression of the operator $\tilde{b}_{n}$  contains the factor $b$ both to the left and to the right. 
\begin{definition}  
-1) $ \tilde{b} \tilde{b} = 0$ and hence $\tilde{b}$ is a boundary operator. 
\par
-2) The complex  $\{ C_{\ast}(\mathcal{A}), \; \tilde{b} \}_{\ast}  \}  $ is called \emph{modified Hochschild complex}. It makes sense even for non-localised algebras.
\par
The homology of the modified Hochschild complex  is called \emph{modified Hochschild homology}.
\par
 -3)  We assume $\mathcal{A}$ to be a localised ring;  the
 Alexander - Spanier and the Hochschild complex are localised.  On the space of normalised chains $(\tilde{b}, d)$  introduces a bi-complex structure. The boundary $\tilde{b}$  defined on the space 
 of normalised chains is called  \emph{local modified Hochschild homology}. 
 \par
 The homology of the bi-complex $(\tilde{b}, d)$ is called \emph{local modified periodic cyclic homology}; it  is $\mathbb{Z}_{2}$-graded.
 \end{definition}
 The definition of the local modified periodic cyclic homology is analogues to the definition of periodic cyclic homology,  comp. A. Connes \cite{Connes_Book}, see also J. - L. Loday \cite{Loday} Sect. 5.1.7., pag.159.
\par
More specifically, given the localised ring $\mathcal{A}$, one has
\begin{equation}
(\tilde{b}, d)^{\lambda, per}( {\mathcal{A}}) = \{ C_{p,q}(\mathcal{A}) \}_{ (p,q) \in \mathbb{Z} \times \mathbb{Z} },
\end{equation}
where  $C_{p,q}(\mathcal{A}) := \otimes^{p-q} \mathcal{A}$, for  $q \leq p$.
The boundary maps are 
$$
\tilde{b}:  C_{p,q} \longrightarrow C_{p,q-1}
$$
and
$$
d:  C_{p,q} \longrightarrow C_{p+1,q};
$$

\begin{proposition}      \ref{tildeb}
For any ring $\mathcal{A}$, the homology of the modified Hochschild complex contains the Hochschild homology.
\end{proposition}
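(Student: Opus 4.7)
The plan is to construct a natural comparison morphism of chain complexes from $(C_{*}(\mathcal{A}), b)$ to $(C_{*}(\mathcal{A}), \tilde{b})$, induced by the identity on chains, and to show it yields an injection $H^{\text{Hoch}}_{*}(\mathcal{A}) \hookrightarrow H_{*}(C_{*}(\mathcal{A}),\tilde{b})$. The key structural input is the factorisation $\tilde{b}_{n} = b \circ Q_{n}(db)$, with $Q_{n}(x) := \sum_{k=1}^{n}(-1)^{k-1}\binom{n}{k}x^{k-1}$, read off directly from \eqref{tildeb}.

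First I would check that Hochschild cycles are modified Hochschild cycles. If $b\alpha = 0$, then $(db)\alpha = d(b\alpha) = 0$, so $(db)^{k-1}\alpha = 0$ for every $k\geq 2$; only the $k=1$ term in $Q_{n}(db)\alpha$ survives, giving $Q_{n}(db)\alpha = n\alpha$ and hence $\tilde{b}\alpha = n\,b\alpha = 0$. Injectivity of the induced map on homology is then automatic from the factorisation: if a Hochschild cycle $\alpha$ happens to satisfy $\alpha = \tilde{b}\eta$ for some $\eta$, then $\alpha = b\bigl(Q_{n+1}(db)\eta\bigr)\in\operatorname{Im} b$, so $[\alpha]_{b}$ already vanishes in Hochschild homology.

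The principal remaining step is to show that every Hochschild boundary $b\gamma$ is also a modified Hochschild boundary. Using $b(db)^{k-1}=(bd)^{k-1}b$ (by associativity together with $b^{2}=0$), together with $(bd)(b\gamma)=(1-\sigma)(b\gamma)$ (a consequence of $bd+db=1-\sigma$ and $b^{2}=0$) and the commutation of $\sigma$ with $b,d$, an induction yields $(bd)^{k}(b\gamma)=(1-\sigma)^{k}(b\gamma)$. Substituting into the expression for $\tilde{b}_{n+1}\gamma$ gives
\[
\tilde{b}_{n+1}\gamma \;=\; \Bigl(\sum_{k=1}^{n+1}(-1)^{k-1}\binom{n+1}{k}(1-\sigma)^{k-1}\Bigr)(b\gamma) \;=\; (I+\sigma+\sigma^{2}+\cdots+\sigma^{n})(b\gamma).
\]
On the range of the idempotent $\Pi$ the identity $\sigma^{k} = 1 + bd\,\sigma^{k}$ shows each $\sigma^{k}(b\gamma)-b\gamma$ lies in $\operatorname{Im} b$, so $\tilde{b}_{n+1}\gamma \equiv (n+1)\,b\gamma$ modulo lower-order $b$-boundaries.

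The main obstacle is disposing of the coefficient $(n+1)$. Over $\mathbb{Q}$-algebras it is invertible and one concludes directly that $b\gamma\in\operatorname{Im}\tilde{b}$, establishing the inclusion $H^{\text{Hoch}}_{*}(\mathcal{A})\hookrightarrow H_{*}(C_{*}(\mathcal{A}),\tilde{b})$. For a general ring one must either state the containment only up to $(n+1)$-torsion, or refine the choice of lift $\eta$ inside $\Pi\,C_{n+1}(\mathcal{A})$, iteratively exploiting the explicit identity $\sigma^{k}-1=bd\,\sigma^{k}$ to absorb the coefficient into an honest $\tilde{b}$-boundary at the chain level; closing this iteration is the delicate point the proof will ultimately have to pin down.
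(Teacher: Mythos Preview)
Your first two observations are exactly the paper's entire proof. The paper simply notes that $\tilde{b}_n = b\,Q_n(db)$ has a factor $b$ on the left and (for every term except the $k=1$ summand $nb$) also on the right, and from this concludes: (i) every $b$-cycle is a $\tilde{b}$-cycle, and (ii) every $\tilde{b}$-boundary is a $b$-boundary. It then stops. Your computation that $Q_n(db)\alpha = n\alpha$ when $b\alpha=0$, and that $\tilde{b}\eta = b\bigl(Q_{n+1}(db)\eta\bigr)$, is a slightly more explicit rendering of precisely these two points.

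Where you diverge is in your ``principal remaining step,'' showing that every $b$-boundary is a $\tilde{b}$-boundary so that the identity becomes a genuine chain map $(C_*,b)\to(C_*,\tilde b)$. The paper does not attempt this, and your own analysis shows why: the coefficient $(n+1)$ is a real obstruction over a general ring, and your proposed iteration to absorb it is not completed. The paper's reading of ``contains'' is weaker than the one you are working toward: from $Z_b\subset Z_{\tilde b}$ and $B_{\tilde b}\subset B_b$ one gets that $Z_b/B_{\tilde b}$ injects into $H_{\tilde b}$ and surjects onto $H_b$, which is all that is needed for the dimension inequality $\dim H_b \le \dim H_{\tilde b}$ used immediately afterward.

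In short, your argument is correct up through the point where it matches the paper, and the extra step you flag as ``the delicate point'' is both genuinely delicate and unnecessary for the statement as the paper intends and uses it. Drop the third step and you have the paper's proof verbatim.
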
  
\begin{proof}  
As said before, the operator $\tilde{b}$  contains the factor $b$ both to the left as well as to the right.
For this reason any Hochschild cycle is a $\tilde{b}_{n}$ cycle; for the same reason any $\tilde{b}$ boundary is a $b$ boundary.
\end{proof}  
\subsection{\emph{Local} modified periodic cyclic homology of the algebra of smooth functions.}  
\par
In this subsection we consider the Fr$\acute{e}$ch$\acute{e}$t topological algebra of \emph{smooth} functions over a \emph{compact} smooth manifold $M$.
We know  that the Hochschild boundary is well defined on germs at the diagonals and that the Hochschild homology depends only on the quotient complex, see   Teleman \cite{Teleman_CR}.
\par
The first two terms of the spectral sequence associated to the first filtration (with respect to
$d$ and then with respect to $\tilde{b}$) of the $(\tilde{b}, d)$ bi-complex are
$$
E^{1}_{p,q} = H_{p} (C_{\ast,q}, d) \cong H_{dR}^{q-p} (M)
$$
and
$$
E^{2}_{p,q}  = H_{q}(E^{1}_{p,\ast}, \tilde{b}) \cong H_{dR}^{q-p}(M).
$$
\par
In fact, the  $E^{1}$-term is the Alexander - Spanier co-homology of the complex of smooth chains.
The Alexander - Spanier chain complex contains the sub-complex 
$\sigma_{\ast} C_{\ast}  (\mathcal{A})$.  We know that this complex is co-homologous with the original Alexander - Spanier complex.  For this reason, 
for the computation of  $E^{2}$,
we may represent any element of $E^{1}$ by an element of the this sub-complex.   The term $E^{2} = \tilde{b}$-homology is a quotient group of a sub-group of $E^{1}$;  from this we get 
\begin{equation}  \label{AB1} 
dim \{\tilde{b}-homology \;of \;a \; quotient \;group \; of \; a \; sub-group \; of \; E^{1}  \} =   dim \; E^{2} \leq \; dim \; E^{1}.
\end{equation}
The term $E^{1}$, being the Alexander - Spanier co-homology of $M$, is isomorphic to the
periodic $b$-homology of the algebra $\mathcal{A}$. 
\begin{equation}   \label{AB2}  
dim \; E^{1} \; = \; dim \{ \; b-homology \;of \; C^{\infty}(M) \;\}
\end{equation}
On the other side,  the $\tilde{b}$-homology contains the $b$-homology,  
Proposition \ref{tildeb}   
\begin{equation}  \label{AB3}  
dim \;(b-homology) \;    \leq \; dim( \tilde{b}-homology).
\end{equation}
From the equations (\ref{AB1}), (\ref{AB2}) and (\ref{AB3})  
we get
\begin{equation}
E^{2} \; \cong \; H_{dR}^{\ast}(M).
\end{equation}
We have proved the following result.
\begin{theorem}   \ref{theorem1.24}   
The local modified periodic cyclic homology of the algebra of smooth functions on a \emph{compact} smooth manifold is isomorphic to the $\mathbb{Z}_{2}$-graded de Rham co-homology of the manifold.
\end{theorem}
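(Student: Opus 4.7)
The plan is to analyze the bi-complex $(\tilde{b}, d)$ via its first-filtration spectral sequence, taking $d$ as the inner differential and $\tilde{b}$ as the outer one. First I would identify the $E^{1}$-term: computing $d$-homology on $C_{p,q}(\mathcal{A}) = \otimes^{p-q}\mathcal{A}$ reduces, after restricting to the range of the idempotent $\Pi$ (which is homotopic to the identity and hence preserves homology), to the local Alexander--Spanier cohomology of $\mathcal{A} = C^{\infty}(M)$ along the filtration by neighborhoods of the diagonal. For the algebra of smooth functions on a compact manifold, this projective-limit Alexander--Spanier cohomology is known to coincide with de Rham cohomology, giving $E^{1}_{p,q} \cong H_{dR}^{q-p}(M)$.

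Next I would compute $E^{2}$ by taking $\tilde{b}$-homology of $E^{1}$. The key maneuver is a dimension squeeze. On one hand, $E^{2}$ is a subquotient of $E^{1}$, so $\dim E^{2} \leq \dim E^{1} = \dim H_{dR}^{\ast}(M)$. On the other hand, Proposition \ref{tildeb} shows that every $b$-cycle is a $\tilde{b}$-cycle while every $\tilde{b}$-boundary is a $b$-boundary, hence the Hochschild homology embeds into the modified Hochschild homology; combined with the classical Hochschild--Kostant--Rosenberg--Connes identification of the $b$-homology of $C^{\infty}(M)$ with differential forms (and its periodic version with $H_{dR}^{\ast}(M)$), this forces $\dim E^{2} \geq \dim H_{dR}^{\ast}(M)$. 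Sandwiching these inequalities yields $E^{2} \cong H_{dR}^{\ast}(M)$ as $\mathbb{Z}_{2}$-graded vector spaces, with the parity read off from $q-p$.

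Finally I would check that the spectral sequence degenerates at the $E^{2}$-page and converges to the total cohomology of $(\tilde{b}, d)$. Degeneration is essentially forced: any surviving higher differential would strictly decrease the total dimension, contradicting the already-achieved match with $H_{dR}^{\ast}(M)$ on both ends of the squeeze. Convergence in the $\mathbb{Z}_{2}$-graded periodic setting follows the same pattern as Connes' computation for the $(b,B)$ bi-complex, since the modified bi-complex shares the same bi-degree structure.

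The main obstacle will be making the Alexander--Spanier-to-de-Rham identification fully rigorous in the \emph{localised} setting, i.e.\ verifying that the projective limit of the local Alexander--Spanier complexes of $C^{\infty}(M)$ along shrinking diagonal neighborhoods computes $H_{dR}^{\ast}(M)$ on the nose and not merely up to controlled error. A secondary difficulty is tracking that the HKR-type identification of Hochschild cycles by antisymmetrization remains valid for the modified boundary $\tilde{b}$, since the extra $(db)^{k-1}$ factors in formula (\ref{tildeb}) could in principle introduce spurious cycles; the squeeze argument is only as tight as these two identifications allow.
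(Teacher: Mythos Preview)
Your proposal is correct and follows essentially the same approach as the paper: run the first-filtration spectral sequence of the $(\tilde{b},d)$ bi-complex, identify $E^{1}$ with de Rham cohomology via the local Alexander--Spanier computation, and then pin down $E^{2}$ by the dimension squeeze combining the subquotient bound $\dim E^{2}\le\dim E^{1}$ with the inclusion of $b$-homology into $\tilde{b}$-homology from Proposition~\ref{tildeb}. If anything, your write-up is more explicit than the paper's, since you articulate the degeneration at $E^{2}$ and flag the two places (the localised Alexander--Spanier/de Rham identification and the behaviour of the HKR map under $\tilde{b}$) where the argument would need care; the paper leaves these implicit.
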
   
\section{Characteristic classes of idempotents.}
\begin{definition}  
Let $\mathcal{A}$  be a localised ring which contains the rational numbers. Let $ e \in \mathbb{M}_{N}(\mathcal{A})$ be an idempotent.
\par
The \emph{Chern character} of $e$ is the even local periodic cyclic homology class defined by 
\begin{equation}   \label{ChernCharacterTopologyCycle}  
Ch\;(\;e\;)  \;:=\; e  +  \frac{(-1)^{1}}{1 \;!} \;e \;(de)^{2}   +  \frac{(-1)^{2}}{2 \;!}   e \;(de)^{4}   +  \dots +   \frac{(-1)^{n}}{n \;!}     e \;(de)^{2n} +  \dots  
\end{equation}
\end{definition}
\begin{remark}  
It is important to notice that the Chern character defined above does not use the \emph{trace}.
\end{remark}  
\begin{theorem}   \label{ChernCharacterTopologyClass}
Let $\mathcal{A}$  be a localised ring.  Suppose the ring $\mathbb{K}$ contains 
$\mathbb{Q}$. Let $ e \in \mathbb{M}_{N}(\mathcal{A})$ be an idempotent. 
\par
Then the  RHS  of the equation (\ref{ChernCharacterTopologyCycle}) is a cycle in the bi-complex $(\tilde{b}, \; d)$. 
\end{theorem}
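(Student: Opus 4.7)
The plan is to show that $(\tilde{b} + d)\,Ch(e) = 0$ in the total complex of the bicomplex $(\tilde{b}, d)$, which I would verify bidegree by bidegree: for each $n \geq 0$,
\[
\tilde{b}\!\left(\tfrac{(-1)^{n+1}}{(n+1)!}\,e(de)^{2n+2}\right) \;+\; d\!\left(\tfrac{(-1)^{n}}{n!}\,e(de)^{2n}\right) \;=\; 0.
\]
This reduces the whole theorem to the single algebraic identity $\tilde{b}\bigl(e(de)^{2n+2}\bigr) = (n+1)\,(de)^{2n+1}$, which I would establish in the steps below.

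First I would extract the algebraic consequences of idempotency. Applying $d$ to $e^{2}=e$ gives $e(de)+(de)e=de$; a short computation then yields $(de)^{2}e = e(de)^{2}$, so that $e$ commutes with every even power of $de$. Because $d$ is a graded derivation of the Alexander--Spanier complex with $d^{2}=0$ and $d(de)=0$, the $d$-calculation is immediate: $d\bigl(e(de)^{2n}\bigr)=(de)^{2n+1}$, whence $d\,Ch(e) = \sum_{n \geq 0} \tfrac{(-1)^{n}}{n!}\,(de)^{2n+1}$. This disposes of one half of the bicomplex cycle condition.

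Next I would compute $\tilde{b}\bigl(e(de)^{2n+2}\bigr)$ using the defining formula $\tilde{b}_{N} = b\sum_{k=1}^{N}(-1)^{k-1}\binom{N}{k}(db)^{k-1}$. The Hochschild boundary $b$ applied to $e(de)^{2n+2}$, combined with the idempotent identities above, collapses pairwise: the relations $e\cdot e = e$ and $(de)\,e + e\,(de) = de$ cause most summands in $b$ to telescope, leaving only contributions proportional to $(de)^{2n+1}$. The outer binomial sum over $k$ then fixes the combinatorial coefficient, which I claim equals $n+1$, matching the factorial arithmetic prescribed by the Chern-character coefficients $(-1)^{n}/n!$.

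The main obstacle is precisely this last step: tracking the nested binomial--$db$ combinatorics of $\tilde{b}$ on $e(de)^{2n+2}$ and collapsing them against the idempotent identities to the single multiple $(n+1)(de)^{2n+1}$. One clean route is to work on the sub-complex $\tilde{C}$ where the identity (\ref{identity.24}) holds, and to re-derive $\tilde{b}$ from the expansion of $(1-bd)\sigma^{N} = 1$ with $\sigma = 1 - (bd+db)$; combined with the relation $(de)\,e + e\,(de) = de$, this should deliver the factor $n+1$ and close the argument. This is the non-commutative topology analogue of the classical $(b,B)$ Chern-character cycle calculation, and is cleaner because the $(db)^{k-1}$ insertions inside $\tilde{b}$ absorb the combinatorial burden carried in the standard picture by the larger coefficients $(2n)!/n!$.
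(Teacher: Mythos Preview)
Your plan is exactly the paper's plan: reduce the bicomplex cycle condition to the identity $\tilde{b}\bigl(e(de)^{2n}\bigr)=n\,(de)^{2n-1}$ (your $n+1$ version after a shift), and pair it against $d\bigl(e(de)^{2n-2}\bigr)=(de)^{2n-1}$.

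What you have not yet done, and what the paper does to resolve your ``main obstacle'', is the explicit bookkeeping. Two points are worth flagging. First, your telescoping claim is slightly off: $b\bigl(e(de)^{2m}\bigr)=e(de)^{2m-1}$, not a scalar multiple of $(de)^{2m-1}$, so after the outer $b$ in $\tilde b$ you get a combination of the \emph{two} distinct chains $e(de)^{2m-1}$ and $(de)^{2m-1}$, and you must show separately that the coefficient of the first vanishes. Second, the paper handles the $(db)^{k-1}$ iterates cleanly by observing $(db)\bigl(e(de)^{2m}\bigr)=(de)^{2m}$ and $(db)(de)^{2m}=2(de)^{2m}$ (from $b(de)^{2m}=(2e-1)(de)^{2m-1}$), so $(db)^{k-1}\bigl(e(de)^{2m}\bigr)=2^{k-2}(de)^{2m}$ for $k\geq 2$. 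Applying the final $b$ then leaves
\[
\Bigl(2m+\sum_{k=2}^{2m}(-1)^{k-1}\tbinom{2m}{k}2^{k-1}\Bigr)e(de)^{2m-1}
+\Bigl(\sum_{k=2}^{2m}(-1)^{k}\tbinom{2m}{k}2^{k-2}\Bigr)(de)^{2m-1},
\]
and the two binomial identities (both from the expansion of $(1-2)^{2m}$) give $0$ and $m$ respectively. That is the whole computation; your proposed route via the $\sigma$-expansion would work but is more circuitous than this direct evaluation.
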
  
\begin{proof}   
We compute $\tilde{b}\;e (de)^{2n}$. We have
\begin{gather}
\tilde{b}_{n}  (e \;(de)^{2n}) =
  b \sum_{k=1}^{2n}   (-1)^{k-1} C_{2n}^{k}  (db)^{k-1}  =  \\
                     b    C_{2n}^{1} + b \sum_{k=2}^{n}  (-1)^{k-1} C_{2n}^{k}  (db)^{k-1}  = \\
\tilde{b}_{2n} ( e  (de)^{2n}  ) =  2n \;e \;(de)^{2n-1}  +   
                       b      \sum_{k=2}^{2n}  \;(-1)^{k-1} C_{2n}^{k}  (db)^{k-2} (db) ( e (de)^{2n}  )    = \\ 
                                                         2n \;e \;(de)^{2n-1}  +   
                       b    \sum_{k=2}^{2n}  (-1)^{k-1} C_{2n}^{k}  (db)^{k-2}  d  (e \; de)^{2n-1}  )    =  \\
                                                        2n e (de)^{2n-1}  +   
                        b   \sum_{k=2}^{2n}  (-1)^{k-1} C_{2n}^{k}  (db)^{k-2}    (de)^{2n}  )    =  \\
                                                      2n e \;(de)^{2n-1}  +   
                            b   \sum_{k=2}^{2n}   (-1)^{k-1} C_{2n}^{k}  2^{k-2}  (de)^{2n}  )    =  \\
                                                      2n e \;(de)^{2n-1}  +   
                                 \sum_{k=2}^{2n}  (-1)^{k-1} C_{2n}^{k}  2^{k-2}  ( 2e - 1) (de)^{2n-1}  )    =   \\
                  (  2n  +    \sum_{k=2}^{2n}  (-1)^{k-1} C_{2n}^{k}  2^{k-1}   )   e \;(de)^{2n-1}  +   
                                \sum_{k=2}^{2n}  (-1)^{k} C_{2n}^{k}  2^{k-2}     (de)^{2n-1}.               
\end{gather}
\begin{lemma}  \label{lemma24}  
-1) 
\begin{equation}
 2n  +    \sum_{k=2}^{2n}  (-1)^{k-1} C_{2n}^{k}  2^{k-1}  \; = \; 0,
\end{equation}
\par
-2)
\begin{equation}
 \sum_{k=2}^{2n}  (-1)^{k} C_{2n}^{k}  2^{k-2}  \; = n.
\end{equation}
\end{lemma}   
\begin{proof}  
-1)  
\begin{gather}
 2n  +    \sum_{k=2}^{2n}  (-1)^{k-1} C_{2n}^{k}  2^{k-1}  \; = 
 2n - \frac{1}{2}  \sum_{k=2}^{2n}  (-1)^{k} C_{2n}^{k}  2^{k}  \; = \\
 2n -  \frac{1}{2} [  (1 - 2)^{2n} -  (1 - 2.2n) ]\; = \; 2n - \frac{1}{2} [  1 - 1 + 4n  ]  \;= \; 0.
\end{gather}
-2)
\begin{gather}
\sum_{k=2}^{2n}  (-1)^{k} C_{2n}^{k}  2^{k-2}  \; = 
\; \frac{1}{4} \sum_{k=2}^{2n}  (-1)^{k} C_{2n}^{k}  2^{k}  \; = \;
\; \frac{1}{4} [ \;(1 - 2)^{2n} -  (1 - 2n . 2) \; ] \;=\; n.
\end{gather}
\end{proof}   
From Lemma \ref{lemma24} we get
\begin{equation}
\tilde{b}_{2n} \; (\;e (de)^{2n}\;) \; = \; n \;(de)^{2n-1}.
\end{equation}
\end{proof}
On the other side
\begin{equation}
d \; (\;e \;(de)^{2n-2}\;) \; = \;(de)^{2n-1}.
\end{equation}
These prove the theorem.
\par

\begin{definition}  \label{characteristic subcomplex} 
Let $\Delta \subset \tilde{C}_{\ast}(\mathcal{A})$  be the $\mathbb{K}$ sub-module generated by all chains
\begin{equation}
e (de)^{n},     \;\;\;   (de)^{n},  \;\;
b\;(e (de)^{n}),      \;\;    b\;(de)^{n}, \;\;   
B(\; e (de)^{n}\;), \;\;  B\;(de)^{n},   \;\;               
d(\;e (de)^{n}),  \;\; d \;(de)^{n} 
\end{equation}
where $n \in \mathbb{N}$ and $e \in \mathbb{M}_{\ast}(\mathcal{A})$ is an arbitrary idempotent.
\par
$\Delta$ is called  \emph{characteristic sub-complex}. 
\end{definition}   
\begin{proposition}  
$\Delta$  is a sub-complex both in the $(b, B)$ and $(\tilde{b}, d)$ complexes.
\end{proposition}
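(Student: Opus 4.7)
The plan is a finite case analysis. Label the eight generator families of $\Delta$ as $G_1 = e(de)^n$, $G_2 = (de)^n$, $G_3 = b(e(de)^n)$, $G_4 = b(de)^n$, $G_5 = B(e(de)^n)$, $G_6 = B(de)^n$, $G_7 = d(e(de)^n)$, $G_8 = d(de)^n$. I must verify that each of the four operators $b$, $B$, $\tilde{b}$, $d$ carries every $G_i$ back into $\Delta$. Two layers of cases are immediate: by the very definition of the generator list, $b(G_1), b(G_2), B(G_1), B(G_2), d(G_1), d(G_2)$ are exactly $G_3, G_4, G_5, G_6, G_7, G_8$; and the relations $b^2 = B^2 = d^2 = 0$ kill the six corresponding second--order compositions $b(G_3), b(G_4), B(G_5), B(G_6), d(G_7), d(G_8)$.

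The heart of the proof is the cross cases. Inside the $(b,B)$ bicomplex I would invoke the Connes anticommutation $bB + Bb = 0$ to collapse $b(G_5) = -B(G_3)$ and $b(G_6) = -B(G_4)$, and symmetrically for $B(G_3), B(G_4)$. Inside the $(\tilde b, d)$ bicomplex the analogous relation $\tilde{b} d + d \tilde{b} = 0$, noted in the proposition just before Definition 56, treats the corresponding pairs there. The remaining mixed cross cases between the two bicomplexes (for instance $d(G_3) = db(e(de)^n)$) are reduced by means of the identity $db + bd = 1 - \sigma$ recalled in Section 24, using the fact that $\sigma$ is a chain map preserving the span $\{e(de)^m, (de)^m\}_m$.

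To break the apparent circularity $b(G_5) \leftrightarrow B(G_3)$, I would compute $B(e(de)^n)$ on normalized chains directly: using $B = (1-t)sN$, the idempotent identity $e^2 = e$, and the vanishing of normalized chains containing an internal $1$, the expression collapses to a scalar multiple of $(de)^{n+1}$, which lies in the $G_2$ family; applying $b$ to this then lands in the $G_4$ family. The analogous identity for $d$ is easier: by graded Leibniz and $d^2 = 0$ one gets $d(e(de)^n) = (de)^{n+1}$, so $G_7$ already lies in the $G_2$ span and $G_8 = 0$. Once these primary identifications are in hand, every cross case reduces to one of the immediate closures above, modulo sign tracking.

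Finally, closure of $\Delta$ under $\tilde{b}$ is automatic once closure under $b$ and $d$ has been established, because the defining expression
\begin{equation*}
\tilde{b}_n \;=\; b \sum_{k=1}^{n}(-1)^{k-1}\binom{n}{k}(db)^{k-1}
\end{equation*}
exhibits $\tilde{b}$ as $b$ composed with a polynomial in $b$ and $d$, and iterated applications of $b$ and $d$ to the primary generators $G_1, G_2$ remain in $\Delta$ by the cross-case analysis. The main obstacle in this program is the explicit evaluation of $B(e(de)^n)$ on the normalized cyclic complex of an idempotent; this is sign-heavy but mechanical, and after it is done all remaining closures are pure bookkeeping.
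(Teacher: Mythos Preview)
Your approach is correct, but it is more elaborate than what the paper does. The paper gives no formal proof of this proposition; instead, immediately after stating it, it records (citing Garcia-Bondia, Figueroa, and V\'arilly) the explicit identities
\begin{align*}
b\bigl(e(de)^{2n}\bigr) &= e(de)^{2n-1}, & b\bigl(e(de)^{2n-1}\bigr) &= 0,\\
b\,(de)^{2n} &= (2e-1)(de)^{2n-1}, & b\,(de)^{2n-1} &= 0,\\
B\bigl(e(de)^{2n}\bigr) &= (2n+1)(de)^{2n+1}, & B\bigl((de)^{2n}\bigr) &= 0,\\
B\bigl(e(de)^{2n-1}\bigr) &= 0, & B\bigl((de)^{2n-1}\bigr) &= 0,\\
d\bigl(e(de)^{n}\bigr) &= (de)^{n+1}, & d\,(de)^{n} &= 0.
\end{align*}
These show at once that the $\mathbb{K}$-span of $\{e(de)^m,\,(de)^m\}_m$ is already closed under $b$, $B$, and $d$; consequently your generators $G_3$ through $G_8$ are redundant from the outset, and $\Delta$ coincides with the span of $G_1$ and $G_2$ alone. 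Closure under $\tilde{b}$ then follows exactly as you say, since $\tilde{b}$ is a polynomial in $b$ and $d$.

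Your route via the eight-family case analysis and the homotopy $db + bd = 1 - \sigma$ reaches the same conclusion, and the computation you flag as the main obstacle---that $B(e(de)^n)$ collapses to a scalar multiple of $(de)^{n+1}$---is precisely one of the identities in the paper's list. The difference is only one of economy: by tabulating the full set of identities up front (split by the parity of $n$), the paper makes the redundancy of $G_3$--$G_8$ visible immediately and sidesteps the anticommutation and $\sigma$-bookkeeping entirely, whereas you rediscover this redundancy piecemeal during the cross cases.
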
  
\par
Although not all relations below are used in this chapter, we provide them for the benefit of the reader.

\begin{equation}  
B  (a_{0} da_{1}  \dots da_{n}) \; =\; \sum_{j=0}^{j = n}   (-1)^{nj} 
da_{j} \dots da_{n}  da_{0} \dots da_{j-1}.
\end{equation}    
On the same space the formulas hold
\begin{equation}  
B^{2}\;=\;0, \;\;  bB + Bb \; = \; 0.
\end{equation}  
\par
The following relations hold on normalised local/non-local chains. Although not all of them are necessary in what folllows, we mention them for the benefit of the reader, see J. M. Garcia - Bondia,   H. Figueroa,  J. C. Varilly \cite{Bondia_Figueroa_Varilly},  Pg.  447
\begin{gather}
b(\;e (de)^{2n}\;)\; = \; \;e (de)^{2n-1}\;,      \;\;   b(\;e (de)^{2n-1}\;) \;=\;0  \\   
b\; (de)^{2n} \;=\; (2e-1)(de)^{2n-1}\;,  \;\;\   b\;   (de)^{2n-1} \;=\; 0 \\ 
B(\; e (de)^{2n}\;)\; = \; (2n+1) (de)^{2n+1}, \;\;  B(\;    (de)^{2n}\;)\; = \; 0  \\  
B\;    (e(de)^{2n-1}) \;= \; 0,                          \;\;  B\;    (de)^{2n-1} \;= \; 0,  \\ %
d(\;e (de)^{n}\;)\; = \; (de)^{n+1},               \;\;   d\;   (de)^{n}\; = \; 0.
\end{gather}
The following relation is a property of the idempotent $\Pi_{p}$: it is  needed next
\begin{equation}  \label{Bd} 
\Pi_{p+1} \;(B \;x) = d \; \Pi_{p} \; ( x ).       
\end{equation}
 \begin{theorem}   \ref{ChernCharacterTopologyCycle}   
Let $M$ be a compact smooth manifold and $e$  an idempotent of the matrix algebra over $\mathcal{A} = C^{\infty}(M)$.
\par 
Then the Chern character  $Ch(\;e\;)$  defined by the formula  (\ref{ChernCharacterTopologyCycle})  coincides, up to a scalar factor,  with the entire cyclic homology Chern character of $e$. 
\end{theorem}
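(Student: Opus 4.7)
The plan is to route both Chern characters through the de Rham cohomology of $M$ and show they represent the same class up to a universal scalar. By Theorem \ref{theorem1.24}, the local modified periodic cyclic homology $H^{loc, per, \lambda}_{ev}(C^{\infty}(M))$ is isomorphic to the even de Rham cohomology $H^{ev}_{dR}(M)$, and by Connes' classical computation the entire cyclic cohomology of $C^{\infty}(M)$ is identified with the same de Rham cohomology. Under these identifications, each Chern character should represent the classical Chern character of the vector bundle defined by $e$, whence the equality claimed.

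First I would restrict attention to the characteristic subcomplex $\Delta$ of Definition \ref{characteristic subcomplex}, which contains both cycles: the Chern character $Ch(e)$ of (\ref{ChernCharacterTopologyCycle}) and Connes' entire cyclic Chern character are built from the same monomials $e(de)^{2n}$ and $(e-\frac{1}{2})(de)^{2p}$. On $\Delta$ the normalisation projector $\Pi$ interchanges the two bi-complex structures via the identity $\Pi(Bx) = d\,\Pi(x)$ from (\ref{Bd}), so $B$-cycles correspond to $d$-cycles. This reduces the theorem to a comparison of two representatives of forms of the same type.

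Next I would identify the image of each term in de Rham cohomology. The computation in the proof of Theorem \ref{ChernCharacterTopologyClass} gives $\tilde{b}(e(de)^{2n}) = n(de)^{2n-1}$ and $d(e(de)^{2n-2}) = (de)^{2n-1}$, so the spectral sequence associated with Theorem \ref{theorem1.24} sends the cycle $\frac{(-1)^n}{n!}\, e(de)^{2n}$ to the de Rham form $\frac{(-1)^n}{n!}\,(e\, de\wedge de)^n$, i.e.\ to the classical $n$-th term in $\mathrm{tr}\,\exp(-F)$ (without the trace). On the $(b,B)$ side, Connes' coefficient $(-1)^p \frac{(2p)!}{p!}$ is designed so that after the antisymmetrisation involved in passing from a Hochschild chain to a differential form (which contributes a factor $1/(2p)!$), what remains is again $(-1)^p/p!$ times the same curvature form. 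The extra summand $-\frac{1}{2}(de)^{2p}$ inside $(e-\frac{1}{2})(de)^{2p}$ is a $B$-exact correction that vanishes in cyclic homology, so it does not affect the class. After these identifications, the two Chern characters agree up to an overall normalisation scalar that one reads off explicitly.

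The main obstacle is the rigorous comparison across the two different bi-complex structures. Connes' entire cyclic theory imposes a specific growth condition on the infinite series, and one must check that both cycles converge in the appropriate Fr\'ech\'et topology on $C^{\infty}(M)$ before the abstract isomorphisms through $H^{ev}_{dR}(M)$ can be composed. In addition, the bookkeeping for the antisymmetrisation factor $(2p)!$ and for the trace/no-trace normalisation (the Remark preceding Theorem \ref{ChernCharacterTopologyClass} emphasises the absence of the trace) must be carried out carefully, since it is precisely here that the meaning of the phrase \emph{up to a scalar factor} is pinned down.
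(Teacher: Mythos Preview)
Your approach is genuinely different from the paper's, and considerably more indirect. The paper does not route through de Rham cohomology at all; instead it gives a direct chain-level argument inside the $(\tilde{b},d)$ bi-complex. Concretely, from the relations on the characteristic subcomplex one has $\tilde{b}\bigl(e(de)^{2n-1}\bigr)=0$ and $d\bigl(e(de)^{2n-1}\bigr)=(de)^{2n}$, and combining these the paper writes down an explicit identity of the form
\[
e(de)^{2n} \;=\; \tfrac{1}{2}\bigl(e-\tfrac{1}{2}\bigr)(de)^{2n} \;-\; (\tilde{b}+d)\,\tfrac{1}{2}\,e(de)^{2n-1},
\]
so that each term $e(de)^{2n}$ of the new Chern character is cohomologous, in the $(\tilde{b},d)$ complex, to $\tfrac{1}{2}(e-\tfrac{1}{2})(de)^{2n}$. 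The latter is then recognised (citing Getzler--Szenes) as the degree-$2n$ component of the entire cyclic Chern character, and the ``up to a scalar factor'' is visible explicitly.

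Your strategy of comparing both sides through $H^{ev}_{dR}(M)$ could in principle succeed, but it invokes two heavy isomorphisms (Theorem~\ref{theorem1.24} and Connes' computation of entire cyclic cohomology) and then requires checking that the comparison maps are compatible with the two Chern characters and with the projector $\Pi$; you correctly flag the convergence and normalisation bookkeeping as obstacles. The paper's argument bypasses all of this: it never leaves the characteristic subcomplex $\Delta$, it never invokes the spectral sequence or the de Rham identification, and it produces the scalar factor $\tfrac{1}{2}$ by inspection rather than by tracking antisymmetrisation constants through two abstract isomorphisms. The trade-off is that your route, if carried out, would also establish the compatibility of both Chern characters with the classical one on vector bundles, which the paper's proof does not address directly.
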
    
\begin{proof}
The relations
\begin{gather}
\tilde{b} (e(de)^{2n-1}) \;=\; 0 \\
d (e(de)^{2n-1})\; = \; e(de)^{2n}
\end{gather} 
show that   
\begin{equation}
e(de)^{2n} \;=\; \frac{1}{2}(e - \frac{1}{2}) (de)^{2n} - 
( \tilde{b} + d ) \frac {1}{2} (e(de)^{2n-1}). 
\end{equation}
This means the classes $e(de)^{2n}$,   $\frac{1}{2}(e - \frac{1}{2}) (de)^{2n}$
are co-homologous in the local and non-local complex 
$\sigma (\tilde{C}_{\ast}(\mathcal{A}))$. 
The non-local class $\frac{1}{2}(e - \frac{1}{2}) (de)^{2n}$ represents the Connes - Chern
character of the idempotent $e$ in the non-local entire cyclic homology, see Getzler, Senes \cite{Getzler_Szenes}  and \cite{Bondia_Figueroa_Varilly},  Pg.  447.
\end{proof}  
\begin{theorem}  
The Chern character defines a homomorphism
\begin{equation}
Ch \;:  T_{0}(\mathcal{A})  \longrightarrow  H^{per, loc}_{ev} (\mathcal{A})
\end{equation}
\end{theorem}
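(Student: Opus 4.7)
The plan is to verify that the explicit cochain
$Ch(e) = e + \sum_{n \geq 1} \frac{(-1)^n}{n!}\, e(de)^{2n}$
of Theorem \ref{ChernCharacterTopologyClass} descends from idempotent matrices to the group $T_{0}(\mathcal{A})$. Since the right-hand side is already known to be a $(\tilde{b}, d)$-cycle, it defines a class in $H^{per, loc}_{ev}(\mathcal{A}_{\mu})$. It remains to check that this class is invariant under the three equivalence relations $\sim_{s}$ (stabilisation), $\sim_{c}$ (local conjugation) and $\sim_{p}$ (projective limit with respect to $\mu$), and is additive for $\oplus$ so as to pass through the Grothendieck completion.

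First I would dispose of stabilisation and additivity. If $e' = e \oplus 0_{k}$, then $de' = de \oplus 0$ and all off-diagonal blocks of $e'(de')^{2n}$ vanish, so $Ch(e') = Ch(e) \oplus 0$ represents the same class. By the same block-diagonal computation, $e(de)^{2n}$ for $e = e_{1} \oplus e_{2}$ splits as $e_{1}(de_{1})^{2n} \oplus e_{2}(de_{2})^{2n}$, which yields the additivity relation $Ch(e_{1} \oplus e_{2}) = Ch(e_{1}) + Ch(e_{2})$ at the level of cycles.

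The main step, and the principal obstacle, is local conjugation invariance: for $u \in \mathbb{GL}_{N}(\mathcal{A}_{\mu})$ and $e' = u e u^{-1}$, I must produce a chain $\eta \in \tilde{C}_{\ast}(\mathbb{M}(\mathcal{A}_{\mu}))$ with $(\tilde{b} + d)\eta = Ch(e') - Ch(e)$. My approach is a transgression argument, carried out inside the $(\tilde{b}, d)$ bi-complex rather than the more familiar $(b, B)$ one. Using the factorisation
\begin{equation*}
u \oplus u^{-1} =
\begin{pmatrix} 1 & u \\ 0 & 1 \end{pmatrix}
\begin{pmatrix} 1 & 0 \\ -u^{-1} & 1 \end{pmatrix}
\begin{pmatrix} 1 & u \\ 0 & 1 \end{pmatrix}
\begin{pmatrix} 0 & -1 \\ 1 & 0 \end{pmatrix}
\end{equation*}
of Proposition \ref{factorisation}, after stabilising and doubling we may conjugate $e$ successively by each elementary factor and by the permutation matrix. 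Each elementary factor $E_{ij}(a)$ sits in a one-parameter family $E_{ij}(ta)$ connecting $1$ to $E_{ij}(a)$ inside $\mathbb{GL}(\mathcal{A}_{\mu})$; the permutation factor is handled separately by an explicit homotopy of idempotents using the pair $\bigl(\begin{smallmatrix} p & 1-p \\ 1-p & p \end{smallmatrix}\bigr)$ from the proof of Lemma 107. Differentiating the family $e_{t} = u_{t} e u_{t}^{-1}$ gives $\dot e_{t} = [\dot u_{t} u_{t}^{-1}, e_{t}]$, and a direct computation, relying on $e_{t}\, de_{t}\, e_{t} = 0$ together with the identity $(1 - bd)\sigma^{k} = 1$ valid on $\tilde{C}_{\ast}(\mathcal{A})$, shows that
\begin{equation*}
\tfrac{d}{dt} Ch(e_{t}) = (\tilde{b} + d)\, \tau_{t},
\end{equation*}
where $\tau_{t}$ is the explicit transgression cochain formed from $\dot e_{t}$ and products of $e_{t}$ and $de_{t}$. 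Integrating over $t \in [0,1]$ produces the required primitive $\eta = \int_{0}^{1} \tau_{t}\, dt$. Because every $u_{t}$ lies in $\mathbb{GL}(\mathcal{A}_{\mu})$, the filtration level is preserved, so $\eta$ lives in the correct $\tilde{C}_{\ast}(\mathbb{M}(\mathcal{A}_{\mu}))$.

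Having verified $\sim_{s}$, $\sim_{c}$ and additivity, the universal property of Grothendieck completion upgrades $Ch$ to a group homomorphism $T_{0}(\mathcal{A}_{\mu}) \to H^{per, loc, \lambda}_{ev}(\mathcal{A}_{\mu})$. Compatibility with the projective system in $\mu$ is immediate since the formula for $Ch(e)$ is the same at every level, so the maps commute with the restriction homomorphisms $\mathcal{A}_{\mu'} \hookrightarrow \mathcal{A}_{\mu}$ for $\mu' > \mu$ and pass to the projective limit, yielding the desired homomorphism on $T_{0}(\mathcal{A}) = ProjLim_{\mu} T_{0}(\mathcal{A}_{\mu})$. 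The delicate point throughout is the third step: one must check that the algebraic identities used in the classical $(b,B)$-transgression continue to hold after replacing $B$ by $d$ and $b$ by $\tilde{b}$, which requires systematic use of the projector $\Pi$ to reduce to non-degenerate chains before applying the combinatorial identity of Lemma \ref{lemma24}.
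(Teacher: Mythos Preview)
The paper's own proof is a two-sentence assertion: it simply states that $Ch$ is compatible with stabilisation and conjugation and stops there. Your proposal is far more detailed than what the paper provides, and in that sense goes well beyond it; your treatment of stabilisation, additivity, Grothendieck completion, and the passage to the projective limit is correct and matches what the paper leaves implicit.

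There is, however, a genuine issue in your conjugation-invariance step. You connect $1$ to each elementary factor $E_{ij}(a)$ via the path $E_{ij}(ta)$ with $t\in[0,1]$, differentiate in $t$, and then write $\eta=\int_0^1\tau_t\,dt$. But the ambient object is only assumed to be a localised \emph{ring} containing $\mathbb{Q}$; there is no real parameter, no topology, and no integral available in that generality. As written, the argument only makes sense for topological algebras over $\mathbb{R}$ or $\mathbb{C}$, not for the arbitrary localised rings in the statement. The fix is standard but should be made explicit: replace the analytic path by the \emph{polynomial} homotopy $E_{ij}(ta)\in\mathbb{GL}(\mathcal{A}_\mu[t])$, and use the purely algebraic transgression (the operator $\int_0^1:\mathcal{A}[t]\to\mathcal{A}$ sending $t^k\mapsto 1/(k+1)$, well defined since $\mathbb{Q}\subset\mathcal{A}$) together with homotopy invariance of periodic cyclic homology for polynomial extensions. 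You would then need to check that this homotopy invariance survives the passage from the $(b,B)$ bi-complex to the $(\tilde b,d)$ bi-complex, which is exactly the ``delicate point'' you flag at the end but do not resolve. Alternatively, since the characteristic sub-complex $\Delta$ of Definition~\ref{characteristic subcomplex} is closed under both $(b,B)$ and $(\tilde b,d)$, and since on $\Delta$ the relation $\Pi_{p+1}(Bx)=d\,\Pi_p(x)$ of \eqref{Bd} intertwines the two structures, one can transport the classical $(b,B)$ conjugation-invariance argument directly rather than redoing the transgression from scratch.
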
   
\begin{proof}  
The Chern character is compatible with all equivalences which define 
$T_{0}(\mathcal{A})$:  stabilisation and conjugation. 
\end{proof}  

\section{Rational Pontrjagin classes of topological manifolds}
\subsection{Existance of direct connections on topological manifolds.}
We intend to construct a direct connection $A$ on $M$.  Recall that a \emph{direct connection}, see \cite{Teleman3}, on $M$ consists of a set  of \emph{isomorphisms} 
$A(x, y)$ where, in general
\begin{equation} 
A(x, y) = \text{isomorphism from a neighbourhood of $y$  to a neighbourhood of $x$}.
\end{equation} 
with the property
\begin{equation}
A(x, x) \;=\; Identity. 
\end{equation}
 The connection $A$ will be constructed by induction using a handlebody decomposition of $M$, see 
\cite{Kirby_Siebenmann_Essays}  7.1. Pg.   319.     
\par
The second condition on direct connections assures that the inductive construction of the connection $A$ may be performed without meeting homotopic obstructions in $\pi_{i} (TOP)$.
\subsection{The characteristic class}
We consider the chain
\begin{equation}
\Psi_{2n} (x_{0}, \dots x_{2n}) 
= N  \;  A(x_{i_{0}}, x_{i_{2n}}) \circ   d A(x_{i_{2n}}, x_{i_{2n-1}}) \dots   \circ  dA(x_{i_{0}}, x_{i_{2n}}),
\end{equation}
where $N$  is the cyclic  symmetrisation with respect to the variables $x$ and 
\begin{equation}
d A(y, x) \;= 1 \otimes A(y, x) - A(y, x) \otimes 1.
\end{equation}
Given that $A(x, x) = Identity$, the chain $\Psi_{2n}$ is a cycle in the $b^{'}$-complex. Therefore, 
\begin{enumerate}
\item
it is a $b^{'}$-cycle 
\item
it is $T$-invariant.
\end{enumerate}
$\Psi_{2n}$  is a cycle in the cyclic homology of the algebra $\mathcal{A}$. 


\end{document}